\DeclareMathOperator*{\esssup}{ess\,sup}
\newtheorem{thm}{Theorem}[section]
\newtheorem{cor}[thm]{Corollary}
\newtheorem{lem}[thm]{Lemma}
\newtheorem{prop}[thm]{Proposition}
\newtheorem{defn}[thm]{Definition}
\newtheorem{rem}[thm]{Remark}
\numberwithin{equation}{section}
\renewcommand{\thefootnote}
\renewcommand\Im{\operatorname{Im}}
\newcommand{\X}{\mathcal{X}}
\author {Ahmed Saoudi}
\title{Time-frequency Analysis of two-wavelet theory in Weinstein setting}
\date{}
\begin{document}
 \maketitle
\begin{center}
     Northern Border University, College of Science, Arar, P.O. Box 1631, Saudi Arabia.\\
   Universit\'{e} de Tunis El Manar, Facult\'{e} des sciences de Tunis, Tunisie.\\
     \textbf{ e-mail:} ahmed.saoudi@ipeim.rnu.tn
\end{center}
  \begin{abstract}
In this paper, we introduce the notion of  Weinstein two-wavelet and we define the two-wavelet localization operators in the setting of the Weinstein theory. Then we  give a host of sufficient conditions for the boundedness and compactness of the two-wavelet localization operator  on $L^{p}_{\alpha}(\mathbb{R}^{d+1}_+)$ for all $1\leq p\leq \infty$, in terms of properties of the symbol $\sigma$ and the functions $\varphi$ and $\psi$. In the end, we study some typical examples of the Weinstein two-wavelet localization operators.

 \textbf{ Keywords}.   Weinstein operator; Weinstein wavelet transform; Weinstein two-wavelet transform; Time-frequency Analysis; localization operators. \\
\textbf{Mathematics Subject Classification}. Primary 44A05; Secondary 42B10
 \end{abstract}
  \section{ Introduction}
  The Weinstein operator $\Delta_{W,\alpha}^d$ defined on $\mathbb{R}_{+}^{d+1}=\mathbb{R}^d\times(0, \infty)$, by
\begin{equation*}
\Delta_{W,\alpha}^d=\sum_{j=1}^{d+1}\frac{\partial^2}{\partial x_j^2}+\frac{2\alpha+1}{x_{d+1}}\frac{\partial}{\partial x_{d+1}}=\Delta_d+L_\alpha,\;\alpha>-1/2,
\end{equation*}
where $\Delta_d$ is the Laplacian operator for the $d$ first variables and $L_\alpha$ is the Bessel operator for the last variable defined on $(0,\infty)$ by
$$L_\alpha u=\frac{\partial^2 u}{\partial x_{d+1}^2}+\frac{2\alpha+1}{x_{d+1}}\frac{\partial u}{\partial x_{d+1}}.$$
The Weinstein operator $\Delta_{W,\alpha}^d$ has several applications in pure and applied mathematics, especially in fluid mechanics \cite{brelot1978equation, weinstein1962singular}.

Very recently, many authors have been investigating the behaviour of the Weinstein
transform (\ref{defWeinstein}) with respect to several problems already studied for the classical Fourier transform.
For instance,   Heisenberg-type inequalities \cite{salem2015heisenberg}, Littlewood-Paley g-function \cite{salem2016littlewood},
Shapiro and Hardy–Littlewood–Sobolev type inequalities \cite{salem2020hardy, salem2015shapiro},
 Paley-Wiener theorem \cite{mehrez2017paley}, Uncertainty principles \cite{mejjaoli2011uncertainty, ahmed2018variation, saoudi2019l2}, multiplier Weinstein operator \cite{ahmed2018calder}, wavelet and continuous wavelet transform \cite{gasmi2016inversion, mejjaoli2017new}, Wigner transform and localization operators \cite{saoudi2019weinstein, saoudilocalisation}, and so forth...

In the classical setting, the notion of wavelets was first introduced by Morlet in connection with his study of seismic traces and the mathematical foundations were given by Grossmann and Morlet \cite{grossmann1984decomposition}. Later, Meyer and many other mathematicians recognized many classical results of this theory \cite{koornwinder1993continuous, meyer1992wavelets}. Classical wavelets have wide applications, ranging from signal analysis in geophysics and acoustics to quantum theory and pure mathematics \cite{daubechies1992ten, goupillaud1984cycle, holschneider1995wavelets}.

Recently, the theory of wavelets and continuous wavelet transform has been extended and generalized in the context of differential-difference operators \cite{gasmi2016inversion, mejjaoli2017dunkl, mejjaoli2017new,  mejjaoli2017time}.
Wavelet analysis has attracted attention for its ability to analyze rapidly changing transient signals. Any application using the Fourier like transform can be formulated using wavelets to provide more accurately localized temporal
and frequency information. The reason for the extension from one wavelet to two wavelets comes from the extra
degree of flexibility in signal analysis and imaging when the localization operators are
used as time-varying filters. One of the aims of the continuous wavelet transform, is the study of their localization
operators.

The time-frequency representations required for localization operators wish  have been object of study in quantum mechanics, in PDE and signal analysis recently. In engineering, a natural language is given by time-frequency analysis. Localization operators arise from pure and applied mathematics in connection with various areas of research. They were initiated by Daubechies \cite{daubechies1988time, daubechies1990wavelet, daubechies1988time2}, and before she highlighted the role of these operators to localize a signal simultaneously in time and frequency.

Nowadays, these operators have found many applications to time-frequency analysis, the theory of differential equations, quantum mechanics. Depending on the field of application, these operators are known under the names of Wick, anti-Wick or Toeplitz operators, as well as wave packets, Gabor or short time Fourier transform multipliers. Arguing from these point of view, many works were done on them,  we refer, for instance \cite{balazs2008hilbert, balazs2012multipliers, cordero2003time, de2002uniform, grochenig2013foundations, mejjaoli2017dunkl}.

Using the harmonic analysis associated with the Weinstein operator (generalized translation operators,
generalized convolution, Weinstein transform, ...) and the same idea as for the classical case,
we study the localisations operators associated with the Weinstein two-wavlet \cite{saoudi2020two} and we prove that under
suitable conditions on the symbols and two Weinstein wavelets, the boundedness and
compactness of these localization operators. Our main results for the boundedness and compactness of the Weinstein two wavelet localisation operators, with different symbols and windows, are summarized in the following table.
\begin{table}[h]
\begin{center}
\begin{tabular}{| c | c | c |c |}
\hline
\textbf{Symbol} &  \multicolumn{2}{|c|}{\textbf{Windows}} & \textbf{Localization Operator}  \\ \hline
$\sigma$ & $\varphi$ & $\psi$ & $\mathcal{L}_{\varphi,\psi}(\sigma)$ \\ \hline \hline
$L^1_\alpha(\X)$ & $L^\infty_\alpha(\mathbb{R}^{d+1}_+)$ & $L^1_\alpha(\mathbb{R}^{d+1}_+)$ &
$\mathcal{B}( L^1_\alpha(\mathbb{R}^{d+1}_+))$ \\ \hline
$L^1_\alpha(\X)$ & $L^1_\alpha(\mathbb{R}^{d+1}_+)$ & $L^\infty_\alpha(\mathbb{R}^{d+1}_+)$ & $\mathcal{B}( L^\infty_\alpha(\mathbb{R}^{d+1}_+))$ \\ \hline
$L^1_\alpha(\X)$  & \multicolumn{2}{|c|}{$L^1_\alpha\bigcap L^\infty_\alpha(\mathbb{R}^{d+1}_+)$} & $\mathcal{B}( L^p_\alpha(\mathbb{R}^{d+1}_+)),\, p\in [1,\infty]$ \\ \hline
$L^1_\alpha(\X)$ & $L^q_\alpha(\mathbb{R}^{d+1}_+)$ & $L^p_\alpha(\mathbb{R}^{d+1}_+)$ & $\mathcal{B}( L^p_\alpha(\mathbb{R}^{d+1}_+)),\, p\in [1,\infty]$ \\ \hline
$L^1_\alpha(\X),\,  r\in [1,2]$  & \multicolumn{2}{|c|}{$L^1_\alpha\bigcap L^2_\alpha\bigcap L^\infty_\alpha(\mathbb{R}^{d+1}_+)$} & $\mathcal{B}( L^p_\alpha(\mathbb{R}^{d+1}_+)),\, p\in [r,r']$ \\ \hline
\end{tabular}
\end{center}
\caption{Boundedness and compactness of localisation operators}
\end{table}

This paper is organized as follows. In Section 2, we recall some properties of harmonic
analysis for the Weinstein operators and Weinstein two-wavelet theory. In Section 3,  we  give a host of sufficient conditions for the boundedness and compactness of the two-wavelet localization operator  on $L^{p}_{\alpha}(\mathbb{R}^{d+1}_+)$ for all $1\leq p\leq \infty$, in terms of properties of the symbol $\sigma$ and the functions $\varphi$ and $\psi$. In the end, we study some typical examples of the Weinstein two-wavelet localization operators.
 \section{Preliminaires}
\subsection{Harmonic analysis associated with the Weinstein operator}

For all $\lambda=(\lambda_1,...,\lambda_{d+1})\in\mathbb{C}^{d+1}$, the system
\begin{equation}
\begin{gathered}
\frac{\partial^2u}{\partial x_{j}^2}(  x)
  =-\lambda_{j} ^2u(x), \quad\text{if } 1\leq j\leq d \\
L_{\alpha}u(  x)  =-\lambda_{d+1}^2u(  x), \\
u(  0)  =1, \quad \frac{\partial u}{\partial
x_{d+1}}(0)=0,\quad \frac{\partial u}{\partial
x_{j}}(0)=-i\lambda_{j}, \quad \text{if } 1\leq j\leq d
\end{gathered}
\end{equation}
 has a unique solution  denoted by $\Lambda_{\alpha}^d(\lambda,.),$ and given by
\begin{equation}\label{wkernel}
\Lambda_{\alpha}^d(\lambda,x)=e^{-i<x^\prime,\lambda^\prime>}j_\alpha(x_{d+1}\lambda_{d+1})
\end{equation}
 where $x=(x^\prime,x_{d+1}),\; x_d'=(x_1,x_2,\cdots,x_d),\; \lambda=(\lambda^\prime,\lambda_{d+1}) ,\; \lambda_d'=(\lambda_1,\lambda_2,\cdots,\lambda_d)$ and $j_\alpha$ is the normalized Bessel function of index $\alpha$ defined by
$$j_\alpha(x)=\Gamma(\alpha+1)\sum_{k=0}^\infty\frac{(-1)^k x^{2k}}{2^k k!\Gamma(\alpha+k+1)}.$$
The function $(\lambda,x)\mapsto\Lambda_{\alpha}^d(\lambda,x)$ is called the Weinstein kernel and has a unique extension to $\mathbb{C}^{d+1}\times\mathbb{C}^{d+1}$, and satisfied the following properties.\\
\begin{itemize}
\item[(i)] For all $(\lambda,x)\in \mathbb{C}^{d+1}\times\mathbb{C}^{d+1}$ we have
\begin{equation*}
\Lambda_{\alpha}^d(\lambda,x)=\Lambda_{\alpha}^d(x,\lambda).
\end{equation*}
\item[(ii)] For all $(\lambda,x)\in \mathbb{C}^{d+1}\times\mathbb{C}^{d+1}$ we have
\begin{equation*}
\Lambda_{\alpha}^d(\lambda,-x)=\Lambda_{\alpha}^d(-\lambda,x).
\end{equation*}
\item[(iii)] For all $(\lambda,x)\in \mathbb{C}^{d+1}\times\mathbb{C}^{d+1}$ we get
\begin{equation*}
\Lambda_{\alpha}^d(\lambda,0)=1.
\end{equation*}
\item[(iv)] For all $\nu\in\mathbb{N}^{d+1},\;x\in\mathbb{R}^{d+1}$ and $\lambda\in\mathbb{C}^{d+1}$ we have
\begin{equation*}\label{klk}
 \left|D_\lambda^\nu\Lambda_{\alpha}^d(\lambda,x)\right|\leq\left\|x\right\|^{\left|\nu\right|}e^{\left\|x\right\|\left\|\Im \lambda\right\|}
\end{equation*}
\end{itemize}
where $D_\lambda^\nu=\partial^\nu/(\partial\lambda_1^{\nu_1}...\partial\lambda_{d+1}^{\nu_{d+1}})$ and $\left|\nu\right|=\nu_1+...+\nu_{d+1}.$ In particular, for all $(\lambda,x)\in \mathbb{R}^{d+1}\times\mathbb{R}^{d+1}$, we have
\begin{equation}\label{normLambda}
\left|\Lambda_{\alpha}^d(\lambda,x)\right|\leq 1.
\end{equation}

In the following we denote by
\begin{itemize}
\item [(i)] $-\lambda=(-\lambda',\lambda_{d+1})$
\item[(ii)] $C_*(\mathbb{R}^{d+1})$, the space of continuous functions on $\mathbb{R}^{d+1},$ even with respect to the last variable.
    \item[(iii)] $S_*(\mathbb{R}^{d+1})$, the space of the $C^\infty$ functions, even with respect to the last variable, and rapidly decreasing together with their derivatives.
  \item[(iv)] $\mathcal{S_*}(\mathbb{R}^{d+1}\times\mathbb{R}^{d+1})$, the Schwartz space of rapidly decreasing functions on $\mathbb{R}^{d+1}\times\mathbb{R}^{d+1}$ even with respect to the last two variables.
  \item[(v)]  $\mathcal{D_*}(\mathbb{R}^{d+1})$, the space of $C^\infty$-functions on $\mathbb{R}^{d+1}$ which are of compact support,even with respect to the last variable.
	\item[(vi)] $L^p_\alpha(\mathbb{R}^{d+1}_+),\;1\leq p\leq \infty,$ the space of measurable functions $f$ on $\mathbb{R}^{d+1}_+$ such that
	$$\left\|f\right\|_{\alpha,p}=\left(\int_{\mathbb{R}^{d+1}_+}\left|f(x)\right|^pd\mu_\alpha(x)\right)^{1/p}<\infty, \;p\in[1,\infty),$$
	$$\left\|f\right\|_{\alpha,\infty}=\textrm{ess}\sup_{x\in\mathbb{R}^{d+1}_+}\left|f(x)\right|<\infty,$$

where $d\mu_{\alpha}(x)$ is the measure on  $\mathbb{R}_{+}^{d+1}=\mathbb{R}^d\times(0,\infty)$ given by
\begin{equation*}\label{mesure}
	d\mu_\alpha(x)=\frac{x^{2\alpha+1}_{d+1}}{(2\pi)^d2^{2\alpha}\Gamma^2(\alpha+1)}dx.
	\end{equation*}
\end{itemize}

For a radial function $\varphi\in L_{\alpha}^{1}(\mathbb{R}_{+} ^{d+1})$ the function $\tilde{\varphi}$ defined on $\mathbb{R}_+$ such that $\varphi(x)=\tilde{\varphi}(|x|)$, for all
$x\in\mathbb{R}_{+} ^{d+1}$, is integrable with respect to the measure $r^{2\alpha+d+1}dr$, and we have
\begin{equation}\label{radialweinstein}
  \int_{\mathbb{R}_{+}^{d+1}}\varphi(x)d\mu_{\alpha}(x)=a_\alpha\int_{0}^{\infty}
  \tilde{\varphi}(r)r^{2\alpha+d+1}dr,
\end{equation}
where $$a_\alpha=\frac{1}{2^{\alpha+\frac{d}{2}}\Gamma(\alpha+\frac{d}{2}+1)}.$$
The Weinstein transform generalizing the usual Fourier transform, is given for
$\varphi\in L_{\alpha}^{1}(\mathbb{R}_{+} ^{d+1})$ and $\lambda\in\mathbb{R}_{+}^{d+1}$, by

\begin{equation}\label{defWeinstein}
\mathcal{F}_{W}
(\varphi)(\lambda)=\int_{\mathbb{R}_{+}^{d+1}}\varphi(x)\Lambda_{\alpha}^d(x, \lambda
)d\mu_{\alpha}(x),
\end{equation}

We list some known basic properties of the Weinstein transform are as follows. For the proofs, we refer \cite{nahia1996spherical, nahia1996mean}.

\begin{itemize}
	\item[(i)] For all $\varphi\in L^1_\alpha(\mathbb{R}^{d+1}_+)$, the function $\mathcal{F}_{W}(\varphi)$  is continuous on $\mathbb{R}^{d+1}_+$ and we have
	\begin{equation}\label{L1-Linfty}
	\left\|\mathcal{F}_{W}\varphi\right\|_{\alpha,\infty}\leq\left\|\varphi\right\|_{\alpha,1}.
	\end{equation}
	\item[(ii)]   The Weinstein transform is a topological isomorphism from $\mathcal{S}_*(\mathbb{R}^{d+1})$ onto itself. The inverse transform is given by
	\begin{equation}\label{inversionweinstein}
	\mathcal{F}_{W}^{-1}\varphi(\lambda)= \mathcal{F}_{W}\varphi(-\lambda),\;\textrm{for\;all}\;\lambda\in\mathbb{R}^{d+1}_+.
	\end{equation}
	\item[(iii)] For all $f$ in $\mathcal{D_*}(\mathbb{R}^{d+1})$ (resp. $\mathcal{S_*}(\mathbb{R}^{d+1})$),  we have the following relations
\begin{equation}\label{fourierbar}
 \forall\lambda\in\mathbb{R}^{d+1}_+,\quad \mathcal{F}_{W}(\overline{\varphi})(\lambda)= \overline{\mathcal{F}_{W}(\widetilde{\varphi})(\lambda)},
\end{equation}
\begin{equation}\label{fourierbar1}
 \forall\lambda\in\mathbb{R}^{d+1}_+,\quad \mathcal{F}_{W}(\varphi)(\lambda)= \mathcal{F}_{W}(\widetilde{\varphi})(-\lambda),
\end{equation}
where $\widetilde{\varphi}$ is the function defined by
\begin{equation*}
  \forall\lambda\in\mathbb{R}^{d+1}_+,\quad\widetilde{\varphi}(\lambda)=\varphi(-\lambda).
\end{equation*}
	\item[(iv)] Parseval's formula: For all $\varphi, \phi\in \mathcal{S}_*(\mathbb{R}^{d+1})$, we have
	\begin{equation}\label{MM} \int_{\mathbb{R}^{d+1}_+}\varphi(x)\overline{\phi(x)}d\mu_\alpha(x)=\int_{\mathbb{R}^{d+1}_+}\mathcal{F}_{W}
(\varphi)(x)\overline{\mathcal{F}_{W}(\phi)(x)}d\mu_\alpha(x).
	\end{equation}
\item[(v)] Plancherel's formula: For all $\varphi\in L^2_\alpha(\mathbb{R}^{d+1}_+)$, we have
\begin{equation}\label{Plancherel formula}
	\left\|\mathcal{F}_{W}\varphi\right\|_{\alpha,2}=\left\|\varphi\right\|_{\alpha,2}.
	\end{equation}
\item[(vi)] Plancherel Theorem: The Weinstein transform $\mathcal{F}_{W}$ extends uniquely to an isometric isomorphism on $L^2_\alpha(\mathbb{R}^{d+1}_+).$
\item[(vii)] Inversion formula: Let $\varphi\in L^1_\alpha(\mathbb{R}^{d+1}_+)$ such that $\mathcal{F}_{W}\varphi\in L^1_\alpha(\mathbb{R}^{d+1}_+)$,  then we have
\begin{equation}\label{inv}
\varphi(\lambda)=\int_{\mathbb{R}^{d+1}_+}\mathcal{F}_{W}\varphi(x)\Lambda_{\alpha}^d(-\lambda,x)d\mu_\alpha(x),\;\textrm{a.e. }\lambda\in\mathbb{R}^{d+1}_+.
\end{equation}
\end{itemize}

Using relations (\ref{L1-Linfty}) and (\ref{Plancherel formula}) with Marcinkiewicz's interpolation theorem \cite{zbMATH03367521} we deduce that for every $\varphi\in L^p_\alpha(\mathbb{R}^{d+1}_+)$ for all $1\leq p\leq 2$, the function $\mathcal{F}_{W}(\varphi)\in L^q_\alpha(\mathbb{R}^{d+1}_+), q=p/(p-1),$ and
\begin{equation}\label{Lp-Lq}
	\left\|\mathcal{F}_{W}\varphi\right\|_{\alpha,q}\leq\left\|\varphi\right\|_{\alpha,p}.
	\end{equation}

\begin{defn} The translation operator $\tau^\alpha_x,\;x\in\mathbb{R}^{d+1}_+$ associated with the Weinstein operator $\Delta_{W,\alpha}^d$, is defined for a continuous function $\varphi$ on $\mathbb{R}^{d+1}_+$, which is even with respect to the last variable and for all $y\in\mathbb{R}^{d+1}_+$ by
$$\tau^\alpha_x\varphi(y)=C_\alpha\int_0^\pi\varphi\left(x^\prime+y\prime,\sqrt{x^2_{d+1}+y^2_{d+1}+2x_{d+1}y_{d+1}
\cos\theta}\right)\left(\sin\theta\right)^{2\alpha}d\theta,$$
with $$C_\alpha=\frac{\Gamma(\alpha+1)}{\sqrt{\pi}\Gamma(\alpha+1/2)}.$$
\end{defn}
By using the Weinstein kernel, we can also define a generalized translation, for
a function $\varphi\in\mathcal{S}_*(\mathbb{R}^{d+1})$ and $y\in\mathbb{R}^{d+1}_+$ the generalized translation $\tau^\alpha_x\varphi$ is defined by the following relation
\begin{equation}\label{MMM}
\mathcal{F}_{W}(\tau^\alpha_x\varphi)(y)=\Lambda^d_\alpha(x,y)\mathcal{F}_{W}(\varphi)(y).
\end{equation}

In the following proposition, we give some properties of the Weinstein
translation operator:

\begin{prop} The translation operator $\tau^\alpha_x,\;x\in\mathbb{R}^{d+1}_+$ satisfies the following properties.\\
i). For $\varphi\in\mathbb{C}_*(\mathbb{R}^{d+1})$, we have for all $x,y\in\mathbb{R}^{d+1}_+$
\begin{equation}\label{symtrictrans}
\tau^\alpha_x\varphi(y)=\tau^\alpha_y\varphi(x)\;\textrm{and}\;\tau^\alpha_0\varphi=\varphi.
\end{equation}
ii). Let $\varphi\in L^p_\alpha(\mathbb{R}^{d+1}_+),\;1\leq p\leq \infty$ and $x\in\mathbb{R}^{d+1}_+$. Then  $\tau^\alpha_x\varphi$ belongs to $L^p_\alpha(\mathbb{R}^{d+1}_+)$ and we have
\begin{equation}\label{ineqtransl}
\left\| \tau^\alpha_x\varphi\right\|_{\alpha,p}\leq \left\|\varphi\right\|_{\alpha,p}.
\end{equation}
\end{prop}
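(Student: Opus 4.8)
The plan is to read off both assertions from the explicit integral formula, using that the $\theta$-average is taken against a probability measure and that $\tau^\alpha_x$ preserves the Weinstein measure $\mu_\alpha$.

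For part (i), the symmetry is visible directly: the two arguments $x'+y'$ and $\sqrt{x_{d+1}^2+y_{d+1}^2+2x_{d+1}y_{d+1}\cos\theta}$ at which $\varphi$ is evaluated are both invariant under exchanging $x$ and $y$, so $\tau^\alpha_x\varphi(y)=\tau^\alpha_y\varphi(x)$ follows at once. For $\tau^\alpha_0\varphi=\varphi$ I would set $x=0$, which collapses the radical to $y_{d+1}$; since $\varphi$ is even in its last variable the integrand reduces to the constant $\varphi(y)$, and the claim then rests on the normalization
$$C_\alpha\int_0^\pi(\sin\theta)^{2\alpha}\,d\theta=1,$$
which is the Beta-integral evaluation $\int_0^\pi(\sin\theta)^{2\alpha}\,d\theta=\sqrt{\pi}\,\Gamma(\alpha+\tfrac{1}{2})/\Gamma(\alpha+1)$ paired with the definition of $C_\alpha$.

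For part (ii), the endpoint $p=\infty$ is immediate: bounding the integrand by $\|\varphi\|_{\alpha,\infty}$ and using the same normalization gives $\|\tau^\alpha_x\varphi\|_{\alpha,\infty}\le\|\varphi\|_{\alpha,\infty}$. For $1\le p<\infty$ I would exploit that $C_\alpha(\sin\theta)^{2\alpha}\,d\theta$ is a probability measure on $[0,\pi]$: Jensen's inequality for the convex map $t\mapsto|t|^p$ yields the pointwise bound $|\tau^\alpha_x\varphi(y)|^p\le\tau^\alpha_x(|\varphi|^p)(y)$. Integrating in $y$ against $d\mu_\alpha$ and invoking the measure-preservation identity
$$\int_{\mathbb{R}^{d+1}_+}\tau^\alpha_x g(y)\,d\mu_\alpha(y)=\int_{\mathbb{R}^{d+1}_+}g(y)\,d\mu_\alpha(y)$$
with $g=|\varphi|^p\in L^1_\alpha$ then gives $\|\tau^\alpha_x\varphi\|_{\alpha,p}^p\le\|\varphi\|_{\alpha,p}^p$. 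This single Jensen-plus-invariance argument handles all $1\le p<\infty$ simultaneously and makes any appeal to interpolation unnecessary.

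The hard part will be the measure-preservation identity. On the Schwartz space $\mathcal{S}_*(\mathbb{R}^{d+1})$ it is quick: evaluating the transform relation (\ref{MMM}) at the origin and using $\Lambda^d_\alpha(x,0)=1$ gives $\int\tau^\alpha_x\varphi\,d\mu_\alpha=\mathcal{F}_W(\tau^\alpha_x\varphi)(0)=\mathcal{F}_W\varphi(0)=\int\varphi\,d\mu_\alpha$. To reach a general nonnegative $g\in L^1_\alpha$ I would first use Fubini to peel off the first $d$ variables, where $\tau^\alpha_x$ acts by the ordinary Lebesgue-preserving shift $y'\mapsto x'+y'$, thereby reducing the statement to the classical integral-invariance of the one-dimensional Bessel translation in the variable $y_{d+1}$; alternatively one extends the Schwartz identity to all nonnegative $L^1_\alpha$ functions using that $\tau^\alpha_x$ is positivity preserving together with a monotone approximation. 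This extension from the Schwartz class, resting on the positivity and $L^1_\alpha$-continuity of $\tau^\alpha_x$, is the only genuinely delicate point in the argument.
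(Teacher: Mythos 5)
The paper offers no proof of this proposition at all --- it is quoted as a known property of the Weinstein translation, with the background credited to the references of Ben Nahia and Ben Salem --- so there is no in-text argument to compare against; judged on its own, your proof is correct and is the standard self-contained route. Part (i) is exactly the symmetry of the explicit kernel plus the Beta-integral normalization $C_\alpha\int_0^\pi(\sin\theta)^{2\alpha}\,d\theta=1$, and part (ii) via Jensen against the probability measure $C_\alpha(\sin\theta)^{2\alpha}\,d\theta$ followed by the invariance $\int\tau^\alpha_x g\,d\mu_\alpha=\int g\,d\mu_\alpha$ handles every $p$ at once with constant $1$, which is cleaner than interpolating between $p=1$ and $p=\infty$. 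You are also right to single out the invariance identity for general nonnegative $g\in L^1_\alpha$ as the delicate step; note that a plain density extension of the Schwartz-class identity would be circular, since it presupposes exactly the $L^1_\alpha$-bound being proved, so the positivity route you mention (Tonelli in $(y,\theta)$ together with monotone approximation from $\mathcal{D}_*$, or the reduction to the classical one-dimensional Bessel product formula in the last variable) is the one to carry out. The same Tonelli computation shows that the pushforward of $d\mu_\alpha(y)\otimes C_\alpha(\sin\theta)^{2\alpha}d\theta$ under $(y,\theta)\mapsto\bigl(x'+y',\sqrt{x_{d+1}^2+y_{d+1}^2+2x_{d+1}y_{d+1}\cos\theta}\bigr)$ is $\mu_\alpha$ itself, and this is what makes $\tau^\alpha_x\varphi(y)$ well defined for a.e.\ $y$ when $\varphi$ is only an a.e.-defined class in $L^p_\alpha$ (in particular for $p=\infty$, where the integrand evaluates $\varphi$ on a $\mu_\alpha$-null curve); a written-up version should spend a sentence on this point, but it is a refinement, not a gap.
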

\begin{prop}
   Let $\varphi\in L^1_{\alpha}(\mathbb{R}^{d+1}_+)$. Then for all $x\in \mathbb{R}^{d+1}_+$,
    \begin{equation}\label{integraltransrad}
      \int_{\mathbb{R}^{d+1}_+}\tau^\alpha_x\varphi(y)d\mu_\alpha(y)= \int_{\mathbb{R}^{d+1}_+}\varphi(y)  d\mu_\alpha(y).
    \end{equation}
  \end{prop}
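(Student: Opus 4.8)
The plan is to identify both integrals in (\ref{integraltransrad}) as values of a Weinstein transform at the origin and then to use the fact that $\tau^\alpha_x$ acts multiplicatively under $\mathcal{F}_W$. The starting observation is that, by evaluating the definition (\ref{defWeinstein}) at $\lambda=0$ and invoking property (iii) of the Weinstein kernel, namely $\Lambda_\alpha^d(x,0)=1$, one gets $\mathcal{F}_W(g)(0)=\int_{\mathbb{R}^{d+1}_+}g(y)\,d\mu_\alpha(y)$ for every $g\in L^1_\alpha(\mathbb{R}^{d+1}_+)$. Hence (\ref{integraltransrad}) is equivalent to the single scalar identity $\mathcal{F}_W(\tau^\alpha_x\varphi)(0)=\mathcal{F}_W(\varphi)(0)$.

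First I would prove this for $\varphi\in\mathcal{S}_*(\mathbb{R}^{d+1})$. Here the relation (\ref{MMM}) applies and gives $\mathcal{F}_W(\tau^\alpha_x\varphi)(y)=\Lambda_\alpha^d(x,y)\mathcal{F}_W(\varphi)(y)$ for all $y$. Setting $y=0$ and again using $\Lambda_\alpha^d(x,0)=1$ yields exactly $\mathcal{F}_W(\tau^\alpha_x\varphi)(0)=\mathcal{F}_W(\varphi)(0)$, which is (\ref{integraltransrad}) for Schwartz functions.

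To pass to a general $\varphi\in L^1_\alpha(\mathbb{R}^{d+1}_+)$ I would argue by density. Since $\mathcal{S}_*(\mathbb{R}^{d+1})$ is dense in $L^1_\alpha(\mathbb{R}^{d+1}_+)$, choose $\varphi_n\in\mathcal{S}_*(\mathbb{R}^{d+1})$ with $\varphi_n\to\varphi$ in $L^1_\alpha$. The linear functional $g\mapsto\int_{\mathbb{R}^{d+1}_+}g\,d\mu_\alpha$ is continuous on $L^1_\alpha$, so the right-hand sides converge. For the left-hand sides, the contractivity estimate (\ref{ineqtransl}) together with the linearity of $\tau^\alpha_x$ gives $\|\tau^\alpha_x\varphi_n-\tau^\alpha_x\varphi\|_{\alpha,1}\le\|\varphi_n-\varphi\|_{\alpha,1}\to0$, so $\int_{\mathbb{R}^{d+1}_+}\tau^\alpha_x\varphi_n\,d\mu_\alpha\to\int_{\mathbb{R}^{d+1}_+}\tau^\alpha_x\varphi\,d\mu_\alpha$ as well. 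Letting $n\to\infty$ in the identity for $\varphi_n$ then delivers (\ref{integraltransrad}).

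The step requiring the most care is this density/limiting argument, i.e. checking that $\tau^\alpha_x$ is genuinely a well-defined contraction on $L^1_\alpha$ (so that $\tau^\alpha_x\varphi$ makes sense a.e. and the convergence above is legitimate); this is precisely what (\ref{ineqtransl}) provides. As an alternative that avoids the Weinstein transform altogether, one could compute the left-hand side of (\ref{integraltransrad}) directly: substitute the integral definition of $\tau^\alpha_x\varphi$, apply Fubini (justified by the $L^1$ bound), translate the first $d$ variables, and reduce the last variable together with the $\theta$-integral via the Bessel product formula. In that route the ``triangle'' change of variables in the Bessel convolution is the main computational obstacle, which is exactly the difficulty the transform-based proof sidesteps.
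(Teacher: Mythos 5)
Your proof is correct and follows essentially the same route as the paper, whose entire argument is the one-line remark that the result follows from combining the inversion/transform identity with the relation $\mathcal{F}_{W}(\tau^\alpha_x\varphi)(y)=\Lambda^d_\alpha(x,y)\mathcal{F}_{W}(\varphi)(y)$ evaluated at the origin, where $\Lambda^d_\alpha(x,0)=1$. Your write-up is in fact more careful than the paper's, since it adds the density and contraction argument needed to pass from $\mathcal{S}_*(\mathbb{R}^{d+1})$ (where that relation is stated) to general $\varphi\in L^1_\alpha(\mathbb{R}^{d+1}_+)$.
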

  \begin{proof}
    The result comes from combination identities (\ref{inv}) and (\ref{MMM}).
  \end{proof}
  By using the generalized translation, we define the generalized convolution product $\varphi*\psi$ of the functions $\varphi,\;\psi\in L^1_\alpha(\mathbb{R}^{d+1}_+)$ as follows
\begin{equation}\label{defconvolution}
\varphi*\psi(x)=\int_{\mathbb{R}^{d+1}_+}\tau^\alpha_x\varphi(-y)\psi(y)d\mu_\alpha(y).
\end{equation}
\\
This convolution is commutative and associative, and it satisfies the following properties.

\begin{prop}\label{propconvol}
i) For all $\varphi,\psi\in L^1_\alpha(\mathbb{R}^{d+1}_+),$\;(resp. $\varphi,\psi\in \mathcal{S}_*(\mathbb{R}^{d+1})$), then $\varphi*\psi\in L^1_\alpha(\mathbb{R}^{d+1}_+),$\;(resp. $\varphi*\psi\in \mathcal{S}_*(\mathbb{R}^{d+1})$) and we have
\begin{equation}\label{F(f*g)}
\mathcal{F}_{W}(\varphi*\psi)=\mathcal{F}_{W}(\varphi)\mathcal{F}_{W}(\psi).
\end{equation}
ii) Let $p, q, r\in [1,\infty],$ such that $\frac{1}{p}+\frac{1}{q}-\frac{1}{r}=1.$ Then for all $\varphi\in L^p_\alpha(\mathbb{R}^{d+1}_+)$ and  $\psi\in L^q_\alpha(\mathbb{R}^{d+1}_+)$ the function $\varphi*\psi$ belongs to  $L^r_\alpha(\mathbb{R}^{d+1}_+)$ and we have
\begin{equation}\label{invconvol2}
\left\|\varphi*\psi\right\|_{\alpha,r}\leq\left\|\varphi\right\|_{\alpha,p}\left\|\psi\right\|_{\alpha,q}.
\end{equation}
iii)  Let $\varphi,\psi\in L^2_\alpha(\mathbb{R}^{d+1}_+)$. Then
\begin{equation}
\varphi*\psi=\mathcal{F}_{W}^{-1}\left(\mathcal{F}_{W}(\varphi)\mathcal{F}_{W}(\psi)\right).
\end{equation}
iv) Let $\varphi,\psi\in L^2_\alpha(\mathbb{R}^{d+1}_+)$. Then $\varphi*\psi$ belongs to $L^2_\alpha(\mathbb{R}^{d+1}_+)$ if and only if $\mathcal{F}_{W}(\varphi)\mathcal{F}_{W}(\psi)$ belongs to $L^2_\alpha(\mathbb{R}^{d+1}_+)$ and we have
\begin{equation}
\mathcal{F}_{W}(\varphi*\psi)=\mathcal{F}_{W}(\varphi)\mathcal{F}_{W}(\psi).
\end{equation}
v)  Let $\varphi,\psi\in L^2_\alpha(\mathbb{R}^{d+1}_+)$. Then
\begin{equation}
\|\varphi*\psi\|_{\alpha,2}=\|\mathcal{F}_{W}(\varphi)\mathcal{F}_{W}(\psi)\|_{\alpha,2},
\end{equation}
where both sides are finite or infinite.
\end{prop}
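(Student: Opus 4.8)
The plan is to treat the five assertions in turn, with (i) carrying the analytic heart of the statement and (iii)--(v) following formally from it.

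For (i) I would first establish that $\varphi*\psi\in L^1_\alpha(\mathbb{R}^{d+1}_+)$. Writing out the definition (\ref{defconvolution}), taking absolute values and applying Tonelli's theorem to integrate in $x$ first gives
$$\norm{\varphi*\psi}_{\alpha,1}\le\int_{\mathbb{R}^{d+1}_+}\abs{\psi(y)}\left(\int_{\mathbb{R}^{d+1}_+}\abs{\tau^\alpha_x\varphi(-y)}\,d\mu_\alpha(x)\right)d\mu_\alpha(y).$$
Using the symmetry $\tau^\alpha_x\varphi(-y)=\tau^\alpha_{-y}\varphi(x)$ coming from (\ref{symtrictrans}), the positivity of the translation, and the mass--preservation identity (\ref{integraltransrad}), the inner integral is at most $\norm{\varphi}_{\alpha,1}$, whence $\norm{\varphi*\psi}_{\alpha,1}\le\norm{\varphi}_{\alpha,1}\norm{\psi}_{\alpha,1}<\infty$. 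This $L^1$ bound simultaneously justifies Fubini's theorem in the next step. To obtain (\ref{F(f*g)}) I would substitute (\ref{defconvolution}) into the definition (\ref{defWeinstein}) of $\mathcal{F}_{W}(\varphi*\psi)(\lambda)$, interchange the order of integration, and recognize the inner $x$--integral as $\mathcal{F}_{W}$ applied to $x\mapsto\tau^\alpha_x\varphi(-y)=\tau^\alpha_{-y}\varphi(x)$; the intertwining relation (\ref{MMM}) converts this into $\Lambda^d_\alpha(-y,\lambda)\,\mathcal{F}_{W}(\varphi)(\lambda)$, and after pulling $\mathcal{F}_{W}(\varphi)(\lambda)$ out, the remaining $y$--integral is identified, through the symmetry properties of the Weinstein kernel, with $\mathcal{F}_{W}(\psi)(\lambda)$. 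For the Schwartz statement I would simply note that $\mathcal{F}_{W}$ is a topological isomorphism of $\mathcal{S}_*(\mathbb{R}^{d+1})$ and that $\mathcal{F}_{W}(\varphi)\mathcal{F}_{W}(\psi)\in\mathcal{S}_*(\mathbb{R}^{d+1})$, so $\varphi*\psi=\mathcal{F}_{W}^{-1}(\mathcal{F}_{W}(\varphi)\mathcal{F}_{W}(\psi))\in\mathcal{S}_*(\mathbb{R}^{d+1})$.

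For the Young--type inequality (ii) I would prove two endpoint cases and interpolate. When $r=\infty$ (so $1/p+1/q=1$), Hölder's inequality applied in $y$ to (\ref{defconvolution}), together with the contraction $\norm{\tau^\alpha_x\varphi}_{\alpha,p}\le\norm{\varphi}_{\alpha,p}$ from (\ref{ineqtransl}) and the invariance of $\norm{\cdot}_{\alpha,p}$ under $y\mapsto-y$, gives $\abs{\varphi*\psi(x)}\le\norm{\varphi}_{\alpha,p}\norm{\psi}_{\alpha,q}$ uniformly in $x$. When $q=1$ and $r=p$, Minkowski's integral inequality moves the $L^p_\alpha$--norm inside the $y$--integral and (\ref{ineqtransl}) again yields $\norm{\varphi*\psi}_{\alpha,p}\le\norm{\varphi}_{\alpha,p}\norm{\psi}_{\alpha,1}$. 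Fixing $\varphi\in L^p_\alpha(\mathbb{R}^{d+1}_+)$, the map $\psi\mapsto\varphi*\psi$ is thus bounded $L^1_\alpha\to L^p_\alpha$ and $L^{p'}_\alpha\to L^\infty_\alpha$ with norm at most $\norm{\varphi}_{\alpha,p}$; the Riesz--Thorin interpolation theorem (the same interpolation machinery already invoked for (\ref{Lp-Lq})) then delivers boundedness $L^q_\alpha\to L^r_\alpha$ for every admissible triple $1/p+1/q-1/r=1$, which is precisely (\ref{invconvol2}).

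Finally, (iii)--(v) are consequences of (i) and the $L^2_\alpha$--theory. For $\varphi,\psi\in L^2_\alpha(\mathbb{R}^{d+1}_+)$, Plancherel's formula (\ref{Plancherel formula}) puts $\mathcal{F}_{W}(\varphi),\mathcal{F}_{W}(\psi)\in L^2_\alpha$, so their product lies in $L^1_\alpha$ by Cauchy--Schwarz and $\mathcal{F}_{W}^{-1}(\mathcal{F}_{W}(\varphi)\mathcal{F}_{W}(\psi))$ is well defined; I would derive (iii) by approximating $\varphi,\psi$ in $L^2_\alpha$ by functions of $\mathcal{S}_*(\mathbb{R}^{d+1})$, for which (i) gives the identity, and passing to the limit using continuity of the convolution and of $\mathcal{F}_{W}^{-1}$. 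Assertion (iv) follows by applying the isometric isomorphism $\mathcal{F}_{W}$ of $L^2_\alpha$ to (iii): since $\mathcal{F}_{W}$ preserves $L^2_\alpha$ in both directions, $\varphi*\psi\in L^2_\alpha$ if and only if $\mathcal{F}_{W}(\varphi)\mathcal{F}_{W}(\psi)\in L^2_\alpha$, and in that case $\mathcal{F}_{W}(\varphi*\psi)=\mathcal{F}_{W}(\varphi)\mathcal{F}_{W}(\psi)$. Assertion (v) is immediate from (iii) and Plancherel's isometry, $\norm{\varphi*\psi}_{\alpha,2}=\norm{\mathcal{F}_{W}^{-1}(\mathcal{F}_{W}(\varphi)\mathcal{F}_{W}(\psi))}_{\alpha,2}=\norm{\mathcal{F}_{W}(\varphi)\mathcal{F}_{W}(\psi)}_{\alpha,2}$, both sides being finite or infinite together. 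The main obstacle is the transform identity in (i): the delicate point is the bookkeeping of the reflection $-y=(-y',y_{d+1})$ through the intertwining relation (\ref{MMM}) and the parity/symmetry properties of the Weinstein kernel, which is exactly what forces the second factor to emerge as $\mathcal{F}_{W}(\psi)(\lambda)$ rather than a reflected version; everything else---the Fubini/Tonelli justifications in (i)--(ii) and the interpolation setup---is routine once this identity and the translation estimates (\ref{ineqtransl}), (\ref{integraltransrad}) are in hand.
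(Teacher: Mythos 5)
The paper itself offers no proof of this proposition: it is quoted as known background from the harmonic analysis of the Weinstein operator (the results of Ben Nahia--Ben Salem and the subsequent wavelet papers), so there is nothing to compare your argument against line by line. On its own terms your outline is the standard proof and is essentially sound: Tonelli plus positivity of $\tau^\alpha_x$ and \eqref{integraltransrad} for the $L^1_\alpha$ bound, Fubini plus the intertwining relation for \eqref{F(f*g)}, the two endpoint cases plus Riesz--Thorin for Young's inequality (your exponent bookkeeping checks out), and density/Plancherel for (iii)--(v). The one place you should not wave your hands is exactly the point you flag at the end. If you run the Fubini computation with \eqref{MMM} exactly as printed, you get $\mathcal{F}_{W}(\tau^\alpha_{-y}\varphi)(\lambda)=\Lambda^d_\alpha(-y,\lambda)\mathcal{F}_{W}(\varphi)(\lambda)$, and since $\Lambda^d_\alpha(-y,\lambda)=\Lambda^d_\alpha(y,-\lambda)$ the remaining $y$-integral is $\mathcal{F}_{W}(\psi)(-\lambda)=\mathcal{F}_{W}(\widetilde{\psi})(\lambda)$, not $\mathcal{F}_{W}(\psi)(\lambda)$; the kernel symmetries alone will not remove the reflection. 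The resolution is that \eqref{MMM} as printed is inconsistent with the explicit integral formula for $\tau^\alpha_x$ (test it on the Euclidean factor: $\tau^\alpha_x\varphi$ shifts by $+x'$, so its transform picks up $e^{i\langle x',\lambda'\rangle}=\Lambda^d_\alpha(-x,\lambda)$ in the first $d$ variables, not $\Lambda^d_\alpha(x,\lambda)$); with the consistent form $\mathcal{F}_{W}(\tau^\alpha_x\varphi)(y)=\Lambda^d_\alpha(-x,y)\mathcal{F}_{W}(\varphi)(y)$ the second factor does come out as $\mathcal{F}_{W}(\psi)(\lambda)$ and \eqref{F(f*g)} holds as stated. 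So your proof is correct in structure, but to make it rigorous you must either correct \eqref{MMM} or verify the intertwining relation directly from the explicit translation formula rather than citing it.
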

\subsection{Weinstein two-wavelet theory}

In the following, we denote by \\
$\X=\left\{(a,x): x\in \mathbb{R}^{d+1}_+ \;\text{and}\; a>0\right\}$.\\
$L^p_{\alpha}(\X),\; p\in [1,\infty]$ the space of measurable functions $\varphi$ on $\X$ such that
\begin{eqnarray*}
  \|\varphi\|_{L^p_{\alpha}(\X)} &=& \left(\int_{\X}|\varphi(a,x)|^p  d\mu_\alpha(a,x)\right)^\frac{1}{p}<\infty,\quad 1\leq p<\infty, \\
  \|\varphi\|_{L^\infty_{\alpha}(\X)}&=& \esssup_{(a,x)\in\X}|\varphi(a,x)|<\infty,
\end{eqnarray*}
where the measure $\mu_\alpha(a,x)$ is defined on $\X$ by
$$d\mu_\alpha(a,x)=\frac{d\mu_\alpha(x)da}{a^{2\alpha+d+3}}.$$
\begin{defn}\cite{gasmi2016inversion}
 A classical wavelet  on $\mathbb{R}^{d+1}_+$ is a measurable function $\varphi$ on $\mathbb{R}^{d+1}_+$
satisfying for almost all $\xi\in \mathbb{R}^{d+1}_+$, the condition
\begin{equation}\label{defwave}
  0<C_\varphi=\int_{0}^{\infty}|\mathcal{F}_{W}(\varphi)(a\xi)|^2\frac{da}{a}<\infty.
\end{equation}
\end{defn}
We extend the notion of the wavelet to the  two-wavelet in Weinstein setting as follows.
\begin{defn}
  Let $\varphi$ and $\psi$ be in $L^2_{\alpha}(\mathbb{R}^{d+1}_+)$. We say that the pair $(\varphi,\psi)$ is a Weinstein
two-wavelet on $\mathbb{R}^{d+1}_+$ if the following integral
\begin{equation}\label{deftwowave}
  C_{\varphi,\psi}=\int_{0}^{\infty}\mathcal{F}_{W}(\psi)(a\xi)\overline{\mathcal{F}_{W}(\varphi)
  (a\xi)}\frac{da}{a}
\end{equation}
is constant for almost all  $\xi\in \mathbb{R}^{d+1}_+$ and we call the number $ C_{\varphi,\psi}$ the Weinstein two-wavelet constant associated to the functions $\varphi$ and $\psi$.
\end{defn}
It is to highlight that if $\varphi$ is a Weinstein  wavelet then the pair $(\varphi,\psi)$ is a Weinstein two-wavelet,
and $C_{\varphi,\psi}$ coincides with   $ C_{\varphi}$.

Let $a>0$ and $\varphi$ be a measurable function. We consider the function $\varphi_a$ defined by
\begin{equation}\label{fia}
  \forall x\in \mathbb{R}^{d+1}_+, \quad \varphi_a(x)=\frac{1}{a^{2\alpha+d+2}}\varphi\left(\frac{x}{a}\right).
\end{equation}
\begin{prop}
  \begin{enumerate}
    \item Let $a>0$ and $\varphi\in L^p_{\alpha}(\mathbb{R}^{d+1}_+),\;p\in[1,\infty]$. The function $\varphi_a$ belongs to $L^p_{\alpha}(\mathbb{R}^{d+1}_+)$ and we have
        \begin{equation}\label{normLpfia}
           \|\varphi_a\|_{\alpha,p}=a^{(2\alpha+d+2)(\frac{1}{p}-1)} \|\varphi\|_{\alpha,p}.
        \end{equation}
    \item Let $a>0$ and $\varphi\in L^1_{\alpha}(\mathbb{R}^{d+1}_+)\cup L^2_{\alpha}(\mathbb{R}^{d+1}_+)$. Then, we have
        \begin{equation}\label{fourierfia}
          \mathcal{F}_{W}(\varphi_a)(\xi)=\mathcal{F}_{W}(\varphi)(a\xi),\quad\xi\in \mathbb{R}^{d+1}_+.
        \end{equation}
  \end{enumerate}
\end{prop}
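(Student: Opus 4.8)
The plan is to prove both parts by a single change of variables, $y=x/a$, in the relevant integrals, exploiting the fact that both the measure $d\mu_\alpha$ and the Weinstein kernel $\Lambda_\alpha^d$ behave predictably under the dilation $x\mapsto x/a$. The only structural ingredients needed beyond the substitution are the explicit scaling of the weight in $d\mu_\alpha$ and the homogeneity of the kernel read off from its formula \eqref{wkernel}.

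For part (1), I would start from the definition of $\|\cdot\|_{\alpha,p}$ and insert \eqref{fia}. The decisive bookkeeping is the scaling of the measure: since $d\mu_\alpha(x)$ carries the weight $x_{d+1}^{2\alpha+1}\,dx$ up to a constant, the substitution $x=ay$ contributes a factor $x_{d+1}^{2\alpha+1}=a^{2\alpha+1}y_{d+1}^{2\alpha+1}$ from the weight and a factor $a^{d+1}$ from $dx$, so that $d\mu_\alpha(ay)=a^{2\alpha+d+2}\,d\mu_\alpha(y)$. Combining this with the normalizing prefactor $a^{-(2\alpha+d+2)}$ from \eqref{fia}, and raising to the $p$-th power, gives for $1\le p<\infty$
\[
\|\varphi_a\|_{\alpha,p}^p=a^{-(2\alpha+d+2)p}\,a^{2\alpha+d+2}\,\|\varphi\|_{\alpha,p}^p,
\]
and taking the $p$-th root produces exactly the exponent $(2\alpha+d+2)(\tfrac1p-1)$; in particular $\varphi_a\in L^p_\alpha(\mathbb{R}^{d+1}_+)$. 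The case $p=\infty$ is handled separately but is immediate: the dilation leaves the essential supremum unchanged, so only the prefactor $a^{-(2\alpha+d+2)}$ remains, matching the exponent $(2\alpha+d+2)(0-1)$.

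For part (2) with $\varphi\in L^1_\alpha(\mathbb{R}^{d+1}_+)$, I would substitute $y=x/a$ directly in the integral defining $\mathcal{F}_W(\varphi_a)(\xi)$. The new ingredient is the homogeneity of the kernel: from \eqref{wkernel},
\[
\Lambda_\alpha^d(ay,\xi)=e^{-i\langle a y',\xi'\rangle}\,j_\alpha\!\left(a y_{d+1}\xi_{d+1}\right)=\Lambda_\alpha^d(y,a\xi).
\]
The factor $a^{2\alpha+d+2}$ coming from the measure then cancels the prefactor $a^{-(2\alpha+d+2)}$ exactly, leaving $\int_{\mathbb{R}^{d+1}_+}\varphi(y)\,\Lambda_\alpha^d(y,a\xi)\,d\mu_\alpha(y)=\mathcal{F}_W(\varphi)(a\xi)$, as required.

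For $\varphi\in L^2_\alpha(\mathbb{R}^{d+1}_+)$ the integral formula for $\mathcal{F}_W$ need not converge absolutely, so I would instead argue by density and continuity, and this is the only genuinely delicate point. By part (1) the dilation $\varphi\mapsto\varphi_a$ is a bounded operator on $L^2_\alpha(\mathbb{R}^{d+1}_+)$, and the rescaling $g\mapsto g(a\,\cdot)$ is likewise bounded there; together with the Plancherel formula \eqref{Plancherel formula}, both sides of \eqref{fourierfia} depend continuously on $\varphi$ in the $L^2_\alpha$-norm. Since the two sides coincide on the dense subspace $L^1_\alpha\cap L^2_\alpha(\mathbb{R}^{d+1}_+)$, where the $L^1$ computation applies, they coincide as elements of $L^2_\alpha(\mathbb{R}^{d+1}_+)$, the identity \eqref{fourierfia} being understood almost everywhere.
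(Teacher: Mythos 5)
The paper states this proposition without proof (it is quoted from the author's companion work on the Weinstein two-wavelet theory), so there is no in-paper argument to compare against; your proof is correct and is the natural one. The substitution $x=ay$ with $d\mu_\alpha(ay)=a^{2\alpha+d+2}\,d\mu_\alpha(y)$, the separate (and correct) treatment of $p=\infty$, the kernel homogeneity $\Lambda_\alpha^d(ay,\xi)=\Lambda_\alpha^d(y,a\xi)$ read off from (\ref{wkernel}), and the density-plus-Plancherel argument covering the $L^2_\alpha$ case of (\ref{fourierfia}) together give a complete proof.
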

For $a>0$ and $\varphi\in L^2_{\alpha}(\mathbb{R}^{d+1}_+)$, we consider the family $\varphi_{a,x},\; x\in
\mathbb{R}^{d+1}_+$ of Weinstein wavelets on $\mathbb{R}^{d+1}_+$ in $L^2_{\alpha}(\mathbb{R}^{d+1}_+)$ defined by
\begin{equation}\label{deffiax}
  \forall y\in \mathbb{R}^{d+1}_+,\quad \varphi_{a,x}=a^{\alpha+1+\frac{d}{2}}\tau^\alpha_x\varphi_a(y).
\end{equation}
\begin{rem}
  \begin{enumerate}
    \item   Let $\varphi$ be a function in $L^2_{\alpha}(\mathbb{R}^{d+1}_+)$, then we have
    \begin{equation}\label{Nfiax2}
      \forall (a,x)\in\X, \quad \|\varphi_{a,x}\|_{\alpha,2}\leq  \|\varphi\|_{\alpha,2}.
    \end{equation}
    \item  Let $p\in [1,\infty]$ and $\varphi$ be a function in $L^p_{\alpha}(\mathbb{R}^{d+1}_+)$, then we have
    \begin{equation}\label{Nfiaxp}
      \forall (a,x)\in\X, \quad \|\varphi_{a,x}\|_{\alpha,p}\leq a^{(2\alpha+d+2)(\frac{1}{p}-\frac{1}{2})} \|\varphi\|_{\alpha,p}.
    \end{equation}
  \end{enumerate}
\end{rem}
\begin{defn}\cite{mejjaoli2017new}
  Let $\varphi$ be a Weinstein wavelet on $\mathbb{R}^{d+1}_+$ in $L^2_{\alpha}(\mathbb{R}^{d+1}_+)$. The Weinstein continuous wavelet transform $\Phi^W_\varphi$ on $\mathbb{R}^{d+1}_+$ is defined for regular functions $f$ on $\mathbb{R}^{d+1}_+$ by
  \begin{equation}\label{contwave}
    \forall (a,x)\in\X,\quad \Phi^W_\varphi(f)(a,x)=\int_{\mathbb{R}^{d+1}_+}f(y)\overline{\varphi_{a,x}(y)} d\mu_\alpha(y)=\langle f, \varphi_{a,x}\rangle_{\alpha,2}.
  \end{equation}
\end{defn}
This transform can also be written in the form
\begin{equation}\label{recontwave}
  \Phi^W_\varphi(f)(a,x)=a^{\alpha+1+\frac{d}{2}}\check{f}*\overline{\varphi_a}(x).
\end{equation}
\begin{rem}
  \begin{enumerate}
    \item Let $\varphi$ be a function in $L^p_{\alpha}(\mathbb{R}^{d+1}_+)$, and Let $f$ be a function in $L^q_{\alpha}(\mathbb{R}^{d+1}_+)$, with $p\in [1,\infty]$, we define the Weinstein continuous wavelet transform $\Phi^W_\varphi(f)$ by the relation (\ref{recontwave}).
    \item Let $\varphi$ be a Weinstein wavelet on $\mathbb{R}^{d+1}_+$ in $L^2_{\alpha}(\mathbb{R}^{d+1}_+)$. Then from the relations (\ref{Nfiax2}) and (\ref{contwave}), we have for all $f\in L^2_{\alpha}(\mathbb{R}^{d+1}_+)$
        \begin{equation}\label{Nwcont2}
           \|\Phi^W_\varphi(f)\|_{\alpha,\infty}\leq \|f\|_{\alpha,2}\|\varphi\|_{\alpha,2}.
        \end{equation}
    \item Let $\varphi$ be a function in $L^p_{\alpha}(\mathbb{R}^{d+1}_+)$, with $p\in [1,\infty]$, then from the inequality (\ref{invconvol2}) and the identity (\ref{recontwave}),  we have for all $f\in L^q_{\alpha}(\mathbb{R}^{d+1}_+)$
        \begin{equation}\label{Nwcontp}
           \|\Phi^W_\varphi(f)\|_{\alpha,\infty}\leq \|f\|_{\alpha,q}\|\varphi\|_{\alpha,p}.
        \end{equation}
  \end{enumerate}
\end{rem}
\begin{thm}(Parseval's formula)\label{Parseval's formula2wave}\cite{saoudi2020two}
  Let $(\varphi,\psi)$ be a Weinstein two-wavelet. Then for all $\varphi$ and $\psi$ in $L^2_{\alpha}(\mathbb{R}^{d+1}_+)$, we have the following  Parseval type formula
  \begin{equation}\label{ParsevalPsi}
    \int_{\X}\Phi^W_\varphi(f)(a,x)\overline{\Phi^W_\psi(g)(a,x)}d\mu_\alpha(a,x)=C_{\varphi,\psi}\int_{\mathbb{R}^{d+1}_+}f(x)\overline{g(x)} d\mu_\alpha(x),
  \end{equation}
  where $ C_{\varphi,\psi}$ is the Weinstein two-wavelet constant associated to the functions $\varphi$ and $\psi$ given by the identity (\ref{deftwowave}).
\end{thm}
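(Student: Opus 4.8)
The plan is to turn the integral over $\X$ into an iterated integral in which the space variable $x$ is handled first, for each fixed scale $a$, and the scale $a$ is integrated last; the scale integral is precisely where the two-wavelet constant $C_{\varphi,\psi}$ is manufactured. Writing $d\mu_\alpha(a,x)=d\mu_\alpha(x)\,da/a^{2\alpha+d+3}$, I would begin from
\[
\int_{\X}\Phi^W_\varphi(f)(a,x)\overline{\Phi^W_\psi(g)(a,x)}\,d\mu_\alpha(a,x)=\int_0^\infty\left(\int_{\mathbb{R}^{d+1}_+}\Phi^W_\varphi(f)(a,x)\overline{\Phi^W_\psi(g)(a,x)}\,d\mu_\alpha(x)\right)\frac{da}{a^{2\alpha+d+3}}.
\]
For the inner integral I would insert the convolution description (\ref{recontwave}), namely $\Phi^W_\varphi(f)(a,x)=a^{\alpha+1+\frac{d}{2}}(\check f*\overline{\varphi_a})(x)$ and the analogous formula for $\psi,g$. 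This extracts a factor $a^{2\alpha+d+2}$ and leaves the $L^2$-pairing of two Weinstein convolutions, to which I apply Parseval's identity (\ref{MM}) to move to the transform side and then the product rule (\ref{F(f*g)}) to factor each transform.

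Next I would evaluate the individual transform factors. Using the dilation rule (\ref{fourierfia}) together with (\ref{fourierbar}) and (\ref{fourierbar1}), one gets $\mathcal{F}_{W}(\overline{\varphi_a})(\xi)=\overline{\mathcal{F}_{W}(\varphi)(-a\xi)}$ and the same for $\psi$, while the factors coming from $f$ and $g$ reduce to $\mathcal{F}_{W}(f)$ and $\mathcal{F}_{W}(g)$ up to the reflection in the operation $f\mapsto\check f$. Collecting powers of $a$, the prefactor $a^{2\alpha+d+2}$ cancels against $a^{-(2\alpha+d+3)}$ from the measure, leaving exactly $da/a$; note there is no stray Jacobian because the normalisation in (\ref{fia}) makes (\ref{fourierfia}) a pure dilation of the argument. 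After interchanging the order of integration I would carry out the scale integral first,
\[
\int_0^\infty \mathcal{F}_{W}(\psi)(-a\xi)\overline{\mathcal{F}_{W}(\varphi)(-a\xi)}\,\frac{da}{a},
\]
which by the defining relation (\ref{deftwowave}) equals the constant $C_{\varphi,\psi}$; here the hypothesis that the integral in (\ref{deftwowave}) is constant for almost every $\xi$ is what allows the reflected argument $-\xi$ to be replaced harmlessly. What remains is $C_{\varphi,\psi}\int_{\mathbb{R}^{d+1}_+}\mathcal{F}_{W}(\check f)\,\overline{\mathcal{F}_{W}(\check g)}\,d\mu_\alpha$, and a final use of (\ref{MM}) together with the unitarity of $\mathcal{F}_{W}$ from (\ref{Plancherel formula}) and the $\mu_\alpha$-invariance of the reflection returns $C_{\varphi,\psi}\int_{\mathbb{R}^{d+1}_+}f\overline{g}\,d\mu_\alpha$.

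The step I expect to be the main obstacle is the justification of Fubini, since for arbitrary data the double integrand need not be absolutely integrable on $\X$. The clean route is to control the scale integral uniformly in $\xi$: by Cauchy--Schwarz,
\[
\int_0^\infty\abs{\mathcal{F}_{W}(\psi)(a\xi)}\,\abs{\mathcal{F}_{W}(\varphi)(a\xi)}\,\frac{da}{a}\leq\left(\int_0^\infty\abs{\mathcal{F}_{W}(\varphi)(a\xi)}^2\frac{da}{a}\right)^{1/2}\left(\int_0^\infty\abs{\mathcal{F}_{W}(\psi)(a\xi)}^2\frac{da}{a}\right)^{1/2},
\]
so when each of $\varphi$ and $\psi$ is an admissible Weinstein wavelet in the sense of (\ref{defwave}) the right-hand side equals $(C_\varphi C_\psi)^{1/2}$, finite and independent of $\xi$; combined with $\int_{\mathbb{R}^{d+1}_+}\abs{\mathcal{F}_{W}(\check f)}\,\abs{\mathcal{F}_{W}(\check g)}\,d\mu_\alpha\leq\norm{f}_{\alpha,2}\norm{g}_{\alpha,2}$ this makes the double integral absolutely convergent so that Tonelli applies. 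I would thus first establish the identity under this admissibility hypothesis for all $f,g\in L^2_{\alpha}(\mathbb{R}^{d+1}_+)$, and then recover the general two-wavelet case by an approximation argument, using the boundedness estimate (\ref{Nwcont2}) and Plancherel (\ref{Plancherel formula}) to pass to the limit on both sides. A secondary, purely bookkeeping, difficulty is keeping the reflections $f\mapsto\check f$ and $\varphi\mapsto\widetilde\varphi$ and the sign convention $-\lambda=(-\lambda',\lambda_{d+1})$ consistent through (\ref{fourierbar}) and (\ref{fourierbar1}); the two-wavelet hypothesis is exactly the device that renders the resulting sign immaterial.
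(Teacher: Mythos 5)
The paper does not actually prove this theorem: it is imported verbatim, with a citation, from the author's preprint \cite{saoudi2020two}, so there is no in-paper argument to set your proposal against. That said, your computation is the standard (and almost certainly the source's) route, and it is correct: unfold $d\mu_\alpha(a,x)$, pass to the transform side at fixed $a$ via (\ref{recontwave}), (\ref{MM}) and (\ref{F(f*g)}), use (\ref{fourierfia}), (\ref{fourierbar}), (\ref{fourierbar1}) to reduce the window factors to $\overline{\mathcal{F}_{W}(\varphi)(-a\xi)}\,\mathcal{F}_{W}(\psi)(-a\xi)$, observe that the powers of $a$ collapse to $da/a$, perform the scale integral to produce $C_{\varphi,\psi}$ (legitimately, since $-\xi=(-\xi',\xi_{d+1})$ stays in $\mathbb{R}^{d+1}_+$ and $\mu_\alpha$ is invariant under this reflection), and finish with Plancherel. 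Your Cauchy--Schwarz bound $(C_\varphi C_\psi)^{1/2}$ is exactly the right device to justify Fubini--Tonelli.

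The one soft spot is the closing ``approximation argument'' for two-wavelet pairs whose individual admissibility constants might be infinite. As sketched it does not work: (\ref{Nwcont2}) gives only an $L^\infty$ bound on $\Phi^W_\varphi(f)$ over $\X$, which says nothing about integrability against $d\mu_\alpha(a,x)$, and without absolute convergence the left-hand side of (\ref{ParsevalPsi}) is not even a well-defined Lebesgue integral, so there is nothing to pass to the limit in. But this extra generality is not really demanded by the result: in the cited source and in the parallel Dunkl-setting literature the Parseval formula is established precisely for pairs of admissible wavelets forming a two-wavelet, which is the case your main argument already covers completely. I would drop the approximation step and state the admissibility of $\varphi$ and $\psi$ as a hypothesis; with that emendation your proof is complete.
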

\begin{cor}\cite{saoudi2020two}
  Let $(\varphi,\psi)$ be a Weinstein two-wavelet. Then we have the following assertion:
 If the Weinstein two-wavelet constant  $C_{\varphi,\varphi}=0$, then $\Phi^W_\varphi\left(L^2_{\alpha}(\mathbb{R}^{d+1}_+)\right)$ and $\Phi^W_\psi\left(L^2_{\alpha}(\mathbb{R}^{d+1}_+)\right)$ are orthogonal.
\end{cor}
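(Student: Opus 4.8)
The plan is to read off the orthogonality from Parseval's formula (Theorem \ref{Parseval's formula2wave}) by first controlling the $L^2_\alpha(\X)$-norm of $\Phi^W_\varphi(f)$. The key observation is that the diagonal pair $(\varphi,\varphi)$ is itself a Weinstein two-wavelet, with associated constant
$$C_{\varphi,\varphi}=\int_0^\infty \left|\mathcal{F}_{W}(\varphi)(a\xi)\right|^2\frac{da}{a},$$
which is precisely the classical wavelet constant $C_\varphi$ of (\ref{defwave}). Applying the Parseval identity (\ref{ParsevalPsi}) to this pair with $g=f$, I would obtain, for every $f\in L^2_\alpha(\mathbb{R}^{d+1}_+)$,
$$\int_\X\left|\Phi^W_\varphi(f)(a,x)\right|^2 d\mu_\alpha(a,x)=C_{\varphi,\varphi}\left\|f\right\|_{\alpha,2}^2.$$

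Under the hypothesis $C_{\varphi,\varphi}=0$, this identity forces the left-hand side to vanish, so that $\Phi^W_\varphi(f)=0$ in $L^2_\alpha(\X)$ for every $f$. To conclude orthogonality of the two ranges, I would invoke the Cauchy--Schwarz inequality in $L^2_\alpha(\X)$: for any $f,g\in L^2_\alpha(\mathbb{R}^{d+1}_+)$,
$$\left|\langle \Phi^W_\varphi(f),\Phi^W_\psi(g)\rangle_{L^2_\alpha(\X)}\right|\leq \left\|\Phi^W_\varphi(f)\right\|_{L^2_\alpha(\X)}\left\|\Phi^W_\psi(g)\right\|_{L^2_\alpha(\X)}=0,$$
since the first factor on the right is zero. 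Hence every element of $\Phi^W_\varphi\left(L^2_\alpha(\mathbb{R}^{d+1}_+)\right)$ is orthogonal to every element of $\Phi^W_\psi\left(L^2_\alpha(\mathbb{R}^{d+1}_+)\right)$, which is the asserted orthogonality.

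I do not anticipate a genuine obstacle here: the argument is a direct specialization of Parseval's formula combined with Cauchy--Schwarz. The only point worth flagging is that the hypothesis $C_{\varphi,\varphi}=0$ is very strong — since the integrand $|\mathcal{F}_{W}(\varphi)(a\xi)|^2$ is nonnegative, its vanishing integral forces $\mathcal{F}_{W}(\varphi)=0$ almost everywhere, whence $\varphi=0$ by Plancherel (item (vi)) and $\Phi^W_\varphi$ collapses to the zero operator. The orthogonality is therefore degenerate, but it holds exactly as stated. I would favour the Cauchy--Schwarz route over substituting $C_{\varphi,\psi}$ directly into (\ref{ParsevalPsi}), because the hypothesis controls the diagonal constant $C_{\varphi,\varphi}$ rather than the cross-constant $C_{\varphi,\psi}$, and it is the vanishing of $\|\Phi^W_\varphi(f)\|_{L^2_\alpha(\X)}$ that makes the bound immediate.
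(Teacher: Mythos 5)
Your proof is correct for the statement as printed, but it takes a different route from the source, and the divergence traces back to what is almost certainly a typo in the corollary: the intended hypothesis (this corollary is quoted from \cite{saoudi2020two}, where it appears in the standard form known from the two-wavelet literature) is $C_{\varphi,\psi}=0$, not $C_{\varphi,\varphi}=0$. Under the intended hypothesis the proof is a one-line substitution into Parseval's formula (\ref{ParsevalPsi}): for all $f,g\in L^2_{\alpha}(\mathbb{R}^{d+1}_+)$ one has $\int_{\X}\Phi^W_\varphi(f)(a,x)\overline{\Phi^W_\psi(g)(a,x)}\,d\mu_\alpha(a,x)=C_{\varphi,\psi}\langle f,g\rangle_{\alpha,2}=0$, which yields a genuinely non-degenerate orthogonality (both $\varphi$ and $\psi$ may be nonzero, e.g.\ with disjoint frequency supports); note that your diagonal argument would not apply in that situation, since $C_{\varphi,\varphi}$ is then uncontrolled and $\varphi$ need not vanish. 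Taking the printed hypothesis literally, as you do, your argument is sound: $C_{\varphi,\varphi}=0$ means the defining integral (\ref{deftwowave}) for the pair $(\varphi,\varphi)$ equals zero for almost all $\xi$, so $(\varphi,\varphi)$ is itself a Weinstein two-wavelet and Parseval with $g=f$ gives $\|\Phi^W_\varphi(f)\|_{L^2_{\alpha}(\X)}=0$; and you correctly flag that the hypothesis forces $\mathcal{F}_{W}(\varphi)=0$ a.e., hence $\varphi=0$, so the statement is degenerate. One small wrinkle: your Cauchy--Schwarz step tacitly assumes $\Phi^W_\psi(g)\in L^2_{\alpha}(\X)$, which the stated hypotheses do not supply (nothing controls the diagonal constant of $\psi$); this is harmless, because your own observation $\varphi=0$ a.e.\ gives $\varphi_{a,x}=0$ a.e.\ and hence $\Phi^W_\varphi(f)\equiv 0$, so the pairing $\int_{\X}\Phi^W_\varphi(f)\overline{\Phi^W_\psi(g)}\,d\mu_\alpha(a,x)$ vanishes outright, with no need for Cauchy--Schwarz. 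In short: you have a correct proof of the printed (degenerate) statement, while the cited proof establishes the intended, stronger statement by substituting $C_{\varphi,\psi}=0$ directly into (\ref{ParsevalPsi}); it would be worth recording the correction of the hypothesis rather than proving the typo'd version.
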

\begin{thm}(Inversion formula)\cite{saoudi2020two} Let $(\varphi,\psi)$ be a Weinstein two-wavelet. For all $f\in L^1_{\alpha}(\mathbb{R}^{d+1}_+)$ (resp. $L^2_{\alpha}(\mathbb{R}^{d+1}_+)$) such that $\mathcal{F}_{W}(f)$  belongs to $f\in L^1_{\alpha}(\mathbb{R}^{d+1}_+)$
  (resp. $L^1_{\alpha}(\mathbb{R}^{d+1}_+)$ $\cap L^\infty_{\alpha}(\mathbb{R}^{d+1}_+)$), we have
  \begin{equation}\label{inversion2wave}
    f(y)=\frac{1}{C_{\varphi,\psi}}\int_{0}^{\infty}\int_{\mathbb{R}^{d+1}_+}\Phi^W_\varphi(f)(a,x)\psi_{a,x}(y)d\mu_\alpha(a,x),
  \end{equation}
where for each $y\in\mathbb{R}^{d+1}_+ $\, both the inner integral and the outer integral are absolutely
convergent, but eventually not the double integral.
\end{thm}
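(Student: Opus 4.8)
The plan is to reduce the stated iterated integral to the Weinstein inversion formula (\ref{inv}), carrying out the $x$‑integration and the $a$‑integration one at a time so that the delicate absolute convergence the statement warns about is never invoked on the full product measure. First I would fix $a>0$ and study the inner integral
\begin{equation*}
I_a(y)=\int_{\mathbb{R}^{d+1}_+}\Phi^W_\varphi(f)(a,x)\,\psi_{a,x}(y)\,d\mu_\alpha(x),
\end{equation*}
which for each fixed $a$ is absolutely convergent by the norm estimates (\ref{Nfiaxp}), (\ref{Nwcontp}) together with Young's inequality (\ref{invconvol2}). Taking its Weinstein transform in $y$ and exchanging the $x$‑ and $y$‑integrations (a Fubini step legitimate for fixed $a$ by the same estimates), I would use $\mathcal{F}_{W}(\psi_{a,x})(\xi)=a^{\alpha+1+\frac{d}{2}}\Lambda^d_\alpha(x,\xi)\mathcal{F}_{W}(\psi)(a\xi)$, which follows from (\ref{deffiax}), (\ref{MMM}) and (\ref{fourierfia}). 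The remaining $x$‑integral is then exactly $\mathcal{F}_{W}$ of $\Phi^W_\varphi(f)(a,\cdot)$; feeding in the convolution representation (\ref{recontwave}) with the conjugation/reflection rules (\ref{fourierbar}), (\ref{fourierbar1}) and the scaling (\ref{fourierfia}) collapses everything to the clean product
\begin{equation*}
\mathcal{F}_{W}(I_a)(\xi)=a^{2\alpha+d+2}\,\mathcal{F}_{W}(f)(\xi)\,\mathcal{F}_{W}(\psi)(a\xi)\,\overline{\mathcal{F}_{W}(\varphi)(a\xi)}.
\end{equation*}

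Next I would apply $\mathcal{F}_{W}$ (in $y$) to the entire right‑hand side $G(y)=\frac{1}{C_{\varphi,\psi}}\int_0^\infty I_a(y)\,\frac{da}{a^{2\alpha+d+3}}$ and move the transform through the (absolutely convergent) outer $a$‑integral. Using the displayed expression for $\mathcal{F}_{W}(I_a)$, the powers of $a$ combine as $a^{2\alpha+d+2}/a^{2\alpha+d+3}=1/a$, so
\begin{equation*}
\mathcal{F}_{W}(G)(\xi)=\frac{\mathcal{F}_{W}(f)(\xi)}{C_{\varphi,\psi}}\int_0^\infty \mathcal{F}_{W}(\psi)(a\xi)\,\overline{\mathcal{F}_{W}(\varphi)(a\xi)}\,\frac{da}{a}=\mathcal{F}_{W}(f)(\xi),
\end{equation*}
where the inner $a$‑integral is precisely $C_{\varphi,\psi}$ by the very definition (\ref{deftwowave}), cancelling the prefactor. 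Since $\mathcal{F}_{W}(f)\in L^1_\alpha(\mathbb{R}^{d+1}_+)$ in both regimes, the inversion formula (\ref{inv}) then yields $G=f$ pointwise, which is the asserted identity.

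It remains to justify, separately in the two regimes, the exchange of $\mathcal{F}_{W}$ with the outer $a$‑integration (equivalently an $a$–$\xi$ Fubini) and the two uses of (\ref{inv}). The per‑$\xi$ absolute $a$‑integral $\int_0^\infty|\mathcal{F}_{W}(\psi)(a\xi)||\mathcal{F}_{W}(\varphi)(a\xi)|\frac{da}{a}$ is bounded, via Cauchy–Schwarz, by the product of the admissibility integrals of $\varphi$ and $\psi$; combined with $\mathcal{F}_{W}(f)\in L^1_\alpha$ in the first case and $\mathcal{F}_{W}(f)\in L^1_\alpha\cap L^\infty_\alpha$ in the second — the extra $L^\infty$ control being exactly what makes the $\xi$‑integral defining $I_a$ and the subsequent $a$‑integral absolutely convergent — this controls every interchange. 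I expect the main obstacle to be precisely this convergence bookkeeping: one must show that the inner ($x$‑) integral and the outer ($a$‑) integral are each absolutely convergent for every fixed $y$, and that the single exchange between the $a$ and $\xi$ variables is legitimate, while never asserting absolute convergence of the full integral over $\X$ — which in general fails, as the statement itself points out. A softer alternative is to test the double integral against $g\in\mathcal{S}_*(\mathbb{R}^{d+1})$ and invoke the Parseval formula (\ref{ParsevalPsi}) to obtain the identity weakly, but upgrading that to the pointwise, iterated form still demands the same careful analysis.
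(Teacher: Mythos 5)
The paper does not actually prove this theorem: it is quoted from the companion work \cite{saoudi2020two} and used as a black box, so there is no in-text argument to compare yours against step by step. Judged on its own, your strategy is the standard frequency-side proof and its skeleton is correct: the identities $\mathcal{F}_{W}\bigl(\Phi^W_\varphi(f)(a,\cdot)\bigr)(\xi)=a^{\alpha+1+\frac{d}{2}}\mathcal{F}_{W}(f)(\xi)\overline{\mathcal{F}_{W}(\varphi)(a\xi)}$ and $\mathcal{F}_{W}(\psi_{a,x})(\xi)=a^{\alpha+1+\frac{d}{2}}\Lambda^d_\alpha(x,\xi)\mathcal{F}_{W}(\psi)(a\xi)$ (from (\ref{MMM}), (\ref{fourierfia}), (\ref{recontwave})) are right, the powers of $a$ do collapse to $da/a$, the inner $a$-integral is exactly $C_{\varphi,\psi}$ by (\ref{deftwowave}), and closing with (\ref{inv}) is the correct move (note it gives the identity only for a.e.\ $y$, which is also all the cited theorem can mean).

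Three points in your convergence bookkeeping need repair. First, the $(x,y)$-Fubini you use to pass $\mathcal{F}_{W}$ through the inner integral would require $\int_{\mathbb{R}^{d+1}_+}|\psi_{a,x}(y)|\,d\mu_\alpha(y)\leq a^{\alpha+1+\frac d2}\|\psi_a\|_{\alpha,1}$, i.e.\ $\psi\in L^1_\alpha(\mathbb{R}^{d+1}_+)$, which is not among the hypotheses; it is cleaner to evaluate $I_a(y)$ for each fixed $y$ by Parseval in the $x$-variable, writing $\psi_{a,x}(y)=a^{\alpha+1+\frac d2}\tau^\alpha_y\psi_a(x)$ via (\ref{symtrictrans}) and applying (\ref{MM}), which lands on the same $\xi$-integral without ever integrating in $y$. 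Second, your Cauchy--Schwarz domination of $\int_0^\infty|\mathcal{F}_{W}(\psi)(a\xi)|\,|\mathcal{F}_{W}(\varphi)(a\xi)|\,\frac{da}{a}$ by $\sqrt{C_\varphi C_\psi}$ silently assumes each of $\varphi,\psi$ is individually admissible in the sense of (\ref{defwave}); the definition (\ref{deftwowave}) of a two-wavelet does not guarantee $C_\varphi,C_\psi<\infty$, so this must be added as a hypothesis (as is implicitly done throughout the paper) or replaced by a pointwise-in-$\xi$ integrability argument. Third, deducing $G=f$ from $\mathcal{F}_{W}(G)=\mathcal{F}_{W}(f)$ presupposes that $G$ lies in a space on which $\mathcal{F}_{W}$ is injective; the safe route, which your final paragraph already gestures at, is to perform the $(a,\xi)$ interchange first and recognize the resulting $\xi$-integral as the right-hand side of (\ref{inv}) directly, so that no transform of $G$ is ever taken. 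With these repairs the argument is complete.
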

\section{The Weinstein two-wavelet localization operators}
In this section, we will give a host of sufficient conditions for the boundedness and compactness of the two-wavelet localization operator $\mathcal{L}_{\varphi,\psi}(\sigma)$ on $L^{p}_{\alpha}(\mathbb{R}^{d+1}_+)$ for all $1\leq p\leq \infty$, in terms of properties of the symbol $\sigma$ and the functions $\varphi$ and $\psi$.
\begin{defn} Let $\varphi, \psi$ be measurable functions on $\mathbb{R}_{+}^{d+1}$, $\sigma$ be measurable function on $\mathcal{X}$, we the two-wavelet localization operator noted by $\mathcal{L}_{\varphi,\psi}(\sigma )$, on $L^p_\alpha(\mathbb{R}^{d+1}_+)$, $1 \leq p\leq \infty$, by
\begin{equation}\label{two-waveloc}
\mathcal{L}_{\varphi,\psi}(\sigma)(f)(y)=\int_{\X}
 \sigma(a,x)\Phi^W_\varphi(f)(a,x)\psi_{a,x}(y)d\mu_\alpha(a,x),\ \ y\in\mathbb{R}_{+}^{d+1}.
\end{equation}
\end{defn}
In accordance with the different choices of the symbols $\sigma$ and the different continuities
required, we need to impose different conditions on the functions $\varphi$ and $\psi$, and then we obtain a
two-wavelet localization operator on $L^p_\alpha(\mathbb{R}^{d+1}_+)$.
It is often more practical to interpret the definition of the localization operator in a weak sense as following: for all $f$ in $L^p_\alpha(\mathbb{R}^{d+1}_+)$, $1 \leq p\leq \infty$, and $f$ in $L^q_\alpha(\mathbb{R}^{d+1}_+)$
\begin{equation}\label{weaktwo-waveloc}
\langle\mathcal{L}_{\varphi,\psi}(\sigma)(f),g\rangle_{\alpha,2}=\int_{\X}
 \sigma(a,x)\Phi^W_\varphi(f)(a,x)\overline{\Phi^W_\psi(g)(a,x)}d\mu_\alpha(a,x),\ \ y\in\mathbb{R}_{+}^{d+1}.
\end{equation}

\begin{prop}   Let $1 \leq p\leq \infty$. Then the adjoint of the two-wavelet localization operator
$$\mathcal{L}_{\varphi, \psi}(\sigma):\ L^p_\alpha(\mathbb{R}^{d+1}_+)\ \longrightarrow\ L^p_\alpha(\mathbb{R}^{d+1}_+),$$ is $\mathcal{L}_{\psi, \varphi}(\overline{\sigma}):\ L^q_\alpha(\mathbb{R}^{d+1}_+)\ \longrightarrow\ L^q_\alpha(\mathbb{R}^{d+1}_+)$.
\end{prop}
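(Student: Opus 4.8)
The plan is to take the weak formulation (\ref{weaktwo-waveloc}) as the operative definition and verify directly that $\mathcal{L}_{\psi,\varphi}(\overline{\sigma})$ satisfies the defining relation of the adjoint. Recall that for a bounded operator $T\colon L^p_\alpha(\mathbb{R}^{d+1}_+)\to L^p_\alpha(\mathbb{R}^{d+1}_+)$, its adjoint $T^{*}\colon L^q_\alpha(\mathbb{R}^{d+1}_+)\to L^q_\alpha(\mathbb{R}^{d+1}_+)$ (with $q$ the conjugate exponent) is the unique operator satisfying $\langle Tf,g\rangle_{\alpha,2}=\langle f,T^{*}g\rangle_{\alpha,2}$ for all $f\in L^p_\alpha(\mathbb{R}^{d+1}_+)$ and $g\in L^q_\alpha(\mathbb{R}^{d+1}_+)$, the pairing being the $L^p$–$L^q$ duality realized by $\langle u,v\rangle_{\alpha,2}=\int u\,\overline{v}\,d\mu_\alpha$. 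Hence it suffices to show that $\mathcal{L}_{\psi,\varphi}(\overline{\sigma})$ plays the role of $T^{*}$, and uniqueness does the rest.

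First I would write down the weak identity (\ref{weaktwo-waveloc}) for the operator $\mathcal{L}_{\psi,\varphi}(\overline{\sigma})$ applied to $g$ against the test function $f$, namely
\begin{equation*}
\langle\mathcal{L}_{\psi,\varphi}(\overline{\sigma})(g),f\rangle_{\alpha,2}
=\int_{\X}\overline{\sigma}(a,x)\,\Phi^W_\psi(g)(a,x)\,\overline{\Phi^W_\varphi(f)(a,x)}\,d\mu_\alpha(a,x).
\end{equation*}
The crucial structural observation is that the integrand in (\ref{weaktwo-waveloc}) is symmetric under the simultaneous swap of the two windows $\varphi\leftrightarrow\psi$, the exchange $f\leftrightarrow g$, and complex conjugation of the symbol $\sigma$. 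Taking the complex conjugate of the displayed identity and using $\overline{\langle u,v\rangle_{\alpha,2}}=\langle v,u\rangle_{\alpha,2}$ together with $\overline{\overline{\sigma}}=\sigma$ yields
\begin{equation*}
\langle f,\mathcal{L}_{\psi,\varphi}(\overline{\sigma})(g)\rangle_{\alpha,2}
=\int_{\X}\sigma(a,x)\,\Phi^W_\varphi(f)(a,x)\,\overline{\Phi^W_\psi(g)(a,x)}\,d\mu_\alpha(a,x)
=\langle\mathcal{L}_{\varphi,\psi}(\sigma)(f),g\rangle_{\alpha,2},
\end{equation*}
where the last equality is precisely (\ref{weaktwo-waveloc}) for $\mathcal{L}_{\varphi,\psi}(\sigma)$. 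This is the defining adjoint relation, so $\mathcal{L}_{\varphi,\psi}(\sigma)^{*}=\mathcal{L}_{\psi,\varphi}(\overline{\sigma})$.

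I do not expect a deep analytic obstacle here; the computation is essentially a formal manipulation. The only points requiring care, and which I would flag explicitly, are: (i) that the pairing integral in (\ref{weaktwo-waveloc}) converges absolutely for the relevant classes of $\sigma$, $\varphi$, $\psi$ (so that conjugation may be passed under the integral and the rearrangement is legitimate)—this is guaranteed by the boundedness hypotheses underlying the table, e.g.\ via the estimates (\ref{Nwcont2})–(\ref{Nwcontp}) for the wavelet transforms and $\sigma\in L^1_\alpha(\X)$; and (ii) that both $\mathcal{L}_{\varphi,\psi}(\sigma)$ and $\mathcal{L}_{\psi,\varphi}(\overline{\sigma})$ are genuinely bounded on the stated spaces, so that the notion of adjoint and its uniqueness apply. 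Granting these, the symmetry of (\ref{weaktwo-waveloc}) delivers the result immediately.
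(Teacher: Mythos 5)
Your proposal is correct and follows essentially the same route as the paper: both arguments take the weak formulation (\ref{weaktwo-waveloc}) as the definition, exploit the symmetry of the integrand under swapping $\varphi\leftrightarrow\psi$, $f\leftrightarrow g$ and conjugating $\sigma$, and conclude via $\overline{\langle u,v\rangle_{\alpha,2}}=\langle v,u\rangle_{\alpha,2}$. Your explicit remarks on absolute convergence and boundedness are a minor (and welcome) addition to what the paper leaves implicit.
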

\begin{proof} Let $f$ in $L^p_\alpha(\mathbb{R}^{d+1}_+)$ and $g$ in $L^q_\alpha(\mathbb{R}^{d+1}_+)$. Then we have from the relation (\ref{weaktwo-waveloc})
\begin{eqnarray*}
\langle \mathcal{L}_{\varphi, \psi}(\sigma)(f),\ g \rangle_{\alpha,2}&=& \int_{\X}
 \sigma(a,x)\Phi^W_\varphi(f)(a,x)\overline{\Phi^W_\psi(g)(a,x)}d\mu_\alpha(a,x)\\
&=&\overline{\int_{\X}
 \overline{\sigma(a,x)}\Phi^W_\psi(f)(a,x)\overline{\Phi^W_\varphi(g)(a,x)}d\mu_\alpha(a,x)}
\\&=&\overline{\langle \mathcal{L}_{\psi, \varphi}(\overline{\sigma})(g),\ f \rangle_{\alpha,2}}
\\&=&\langle f, \mathcal{L}_{\psi, \varphi}(\overline{\sigma})(g) \rangle_{\alpha,2}.
\end{eqnarray*}
Thus,
\begin{eqnarray}\label{lphp}
\mathcal{L}_{\varphi, \psi}^{*}(\sigma)=\mathcal{L}_{\psi, \varphi}(\overline{\sigma}).
\end{eqnarray}
\end{proof}
\subsection{$L_\alpha^p$-Boundedness of $\mathcal{L}_{\varphi,\psi}(\sigma)$}
For $1 \leq p\leq \infty$, put $\sigma\in  L^1_\alpha(\X)$, $\psi \in L^p_\alpha(\mathbb{R}^{d+1}_+)$ and $\varphi\in L^q_\alpha(\mathbb{R}^{d+1}_+)$. In this subsection, we  are going to show that the Weinstein two-wavelet localization operator
$\mathcal{L}_{\varphi,\psi}(\sigma)$ is a bounded operator on $L^p_\alpha(\mathbb{R}^{d+1}_+)$.
\begin{prop}\label{prop31}
  Let $\sigma\in L^{1}_{\alpha}(\X)$, $\varphi\in L^{\infty}_{\alpha}(\mathbb{R}^{d+1}_+)$
  and  $\psi\in L^{1}_{\alpha}(\mathbb{R}^{d+1}_+)$, then the localization
operator $\mathcal{L}_{\varphi,\psi}(\sigma)$  is bounded and linear from $L^{1}_{\alpha}(\mathbb{R}^{d+1}_+)$ onto itself and we have
\begin{equation*}
  \|\mathcal{L}_{\varphi,\psi}(\sigma)\|_{\mathcal{B}(L^{1}_{\alpha}(\mathbb{R}^{d+1}_+))}\leq \|\varphi\|_{\alpha,\infty} \|\psi\|_{\alpha,1}
  \|\sigma\|_{L^{1}_{\alpha}(\X)}.
\end{equation*}
\end{prop}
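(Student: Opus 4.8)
The plan is to estimate the $L^1_\alpha$ norm of $\mathcal{L}_{\varphi,\psi}(\sigma)(f)$ directly from the defining integral (\ref{two-waveloc}), linearity being immediate from that same formula. First I would write
$$\|\mathcal{L}_{\varphi,\psi}(\sigma)(f)\|_{\alpha,1}=\int_{\mathbb{R}^{d+1}_+}\left|\int_{\X}\sigma(a,x)\Phi^W_\varphi(f)(a,x)\psi_{a,x}(y)\,d\mu_\alpha(a,x)\right|d\mu_\alpha(y),$$
pull the absolute value inside by the triangle inequality, and apply Tonelli's theorem to the resulting nonnegative integrand to interchange the $y$-integration with the $(a,x)$-integration. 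Recognizing the inner $y$-integral of $|\psi_{a,x}(y)|$ as $\|\psi_{a,x}\|_{\alpha,1}$, this gives
$$\|\mathcal{L}_{\varphi,\psi}(\sigma)(f)\|_{\alpha,1}\le\int_{\X}|\sigma(a,x)|\,\bigl|\Phi^W_\varphi(f)(a,x)\bigr|\,\|\psi_{a,x}\|_{\alpha,1}\,d\mu_\alpha(a,x).$$

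The two remaining factors must each be controlled by an explicit power of $a$. For the window, using the definition (\ref{deffiax}), the contractivity of translation (\ref{ineqtransl}), and the dilation scaling (\ref{normLpfia}) with $p=1$, I would obtain $\|\psi_{a,x}\|_{\alpha,1}\le a^{\alpha+1+\frac{d}{2}}\|\psi\|_{\alpha,1}$, which is exactly (\ref{Nfiaxp}) specialized to $p=1$. For the wavelet transform I would use the convolution form (\ref{recontwave}), apply Young's inequality (\ref{invconvol2}) with exponents $(p,q,r)=(1,\infty,\infty)$ to the pair $\check{f}\in L^1_\alpha(\mathbb{R}^{d+1}_+)$ and $\overline{\varphi_a}\in L^\infty_\alpha(\mathbb{R}^{d+1}_+)$, and then rescale $\|\varphi_a\|_{\alpha,\infty}$ by (\ref{normLpfia}) with $p=\infty$. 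Using also $\|\check{f}\|_{\alpha,1}=\|f\|_{\alpha,1}$ (the measure $d\mu_\alpha$ is invariant under $x'\mapsto-x'$), this yields the pointwise bound $\bigl|\Phi^W_\varphi(f)(a,x)\bigr|\le a^{-(\alpha+1+\frac{d}{2})}\|f\|_{\alpha,1}\|\varphi\|_{\alpha,\infty}$.

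Finally I would substitute the two estimates into the integral bound. The decisive point is that the factor $a^{\alpha+1+\frac{d}{2}}$ coming from the window and the factor $a^{-(\alpha+1+\frac{d}{2})}$ coming from the wavelet transform are exactly reciprocal, so their product is $1$ and all $a$-dependence disappears; what remains is $\|\varphi\|_{\alpha,\infty}\|\psi\|_{\alpha,1}\|f\|_{\alpha,1}\int_{\X}|\sigma(a,x)|\,d\mu_\alpha(a,x)$, which is precisely the asserted estimate. I expect the main obstacle to be bookkeeping rather than conceptual: one must track the three distinct powers of $a$ (from the normalization in (\ref{deffiax}), from the $L^1$ dilation scaling, and from the $L^\infty$ dilation scaling) and verify their cancellation, and one must justify the use of Tonelli's theorem, which is legitimate precisely because the integrand is nonnegative and the product estimate above shows the iterated integral is finite.
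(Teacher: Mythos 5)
Your proposal is correct and follows essentially the same route as the paper: triangle inequality plus Tonelli, the convolution form (\ref{recontwave}) with Young's inequality to bound $|\Phi^W_\varphi(f)(a,x)|$ by $a^{-(\alpha+1+\frac{d}{2})}\|f\|_{\alpha,1}\|\varphi\|_{\alpha,\infty}$, and the translation contractivity (\ref{ineqtransl}) with the dilation scaling to bound $\|\psi_{a,x}\|_{\alpha,1}$ by $a^{\alpha+1+\frac{d}{2}}\|\psi\|_{\alpha,1}$, the two powers of $a$ cancelling exactly as in the paper's computation. The only cosmetic difference is that you keep the measure $d\mu_\alpha(a,x)$ intact while the paper unpacks it into $d\mu_\alpha(x)\,da/a^{2\alpha+d+3}$ along the way.
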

\begin{proof}
  Let $f$ be a function in $L^{1}_{\alpha}(\X)$. From the definition of the two-wavelet localization operator (\ref{two-waveloc})
 and according to relations (\ref{deffiax}), (\ref{recontwave}),  (\ref{invconvol2}) and (\ref{ineqtransl}), we have
\begin{eqnarray*}
  \|\mathcal{L}_{\varphi,\psi}(\sigma)(f)\|_{\alpha,1} &\leq & \int_{\mathbb{R}^{d+1}_+} \int_{\X}
 |\sigma(a,x)| \, |\Phi^W_\varphi(f)(a,x)| \, |\psi_{a,x}(y)|d\mu_\alpha(a,x)d\mu_\alpha(y)\\
   &\leq &  \int_{\mathbb{R}^{d+1}_+} \int_{\X}
 |\sigma(a,x)| \, |\check{f}*\overline{\varphi_a(x)}| \, |\tau^\alpha_x\psi_{a}(y)|d\mu_\alpha(x)\frac{da}{a} d\mu_\alpha(y) \\
   &\leq & \int_{\mathbb{R}^{d+1}_+} \int_{\X}
 |\sigma(a,x)| \, \|f\|_{\alpha,1}\|\varphi_a\|_{\alpha,\infty} \, |\tau^\alpha_x\psi_{a}(y)|d\mu_\alpha(x)\frac{da}{a} d\mu_\alpha(y) \\
   &\leq & \|f\|_{\alpha,1}\|\varphi\|_{\alpha,\infty} \int_{\X}|\sigma(a,x| \left[\int_{\mathbb{R}^{d+1}_+} |\tau^\alpha_x\psi_{a}(y)|d\mu_\alpha(y) \right] d\mu_\alpha(x)\frac{da}{a^{2\alpha+d+3}} \\
   &\leq &  \|f\|_{\alpha,1}\|\varphi\|_{\alpha,\infty} \int_{\X}|\sigma(a,x| \left[\int_{\mathbb{R}^{d+1}_+} |\psi_{a}(y)|d\mu_\alpha(y) \right] d\mu_\alpha(x)\frac{da}{a^{2\alpha+d+3}} \\
   &\leq & \|f\|_{\alpha,1}\|\varphi\|_{\alpha,\infty}\|\psi\|_{\alpha,1}\|\sigma\|_{L^{1}_{\alpha}(\X)}.
\end{eqnarray*}
Therefore,
\begin{equation*}
  \|\mathcal{L}_{\varphi,\psi}(\sigma)\|_{\mathcal{B}(L^{1}_{\alpha}(\mathbb{R}^{d+1}_+))}\leq \|\varphi\|_{\alpha,\infty} \|\psi\|_{\alpha,1}
  \|\sigma\|_{L^{1}_{\alpha}(\X)}.
\end{equation*}
\end{proof}
\begin{prop}\label{prop32}
  Let $\sigma\in L^{1}_{\alpha}(\X)$, $\varphi\in L^{1}_{\alpha}(\mathbb{R}^{d+1}_+)$
  and  $\psi\in L^{\infty}_{\alpha}(\mathbb{R}^{d+1}_+)$, then the localization
operator $\mathcal{L}_{\varphi,\psi}(\sigma)$  is bounded and linear from $L^{\infty}_{\alpha}(\mathbb{R}^{d+1}_+)$ onto itself and we have
\begin{equation*}
  \|\mathcal{L}_{\varphi,\psi}(\sigma)\|_{\mathcal{B}(L^{\infty}_{\alpha}(\mathbb{R}^{d+1}_+))}\leq \|\varphi\|_{\alpha,1} \|\psi\|_{\alpha,\infty}
  \|\sigma\|_{L^{1}_{\alpha}(\X)}.
\end{equation*}
\end{prop}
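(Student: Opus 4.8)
The plan is to obtain this as the dual statement of Proposition \ref{prop31}. The key observation is that the hypotheses here are precisely those of Proposition \ref{prop31} with the two windows interchanged: since $\psi\in L^\infty_\alpha(\mathbb{R}^{d+1}_+)$, $\varphi\in L^1_\alpha(\mathbb{R}^{d+1}_+)$ and $\overline{\sigma}\in L^1_\alpha(\X)$ with $\|\overline{\sigma}\|_{L^1_\alpha(\X)}=\|\sigma\|_{L^1_\alpha(\X)}$, Proposition \ref{prop31} applied to $\mathcal{L}_{\psi,\varphi}(\overline{\sigma})$ gives that it is bounded on $L^1_\alpha(\mathbb{R}^{d+1}_+)$ with
\begin{equation*}
\|\mathcal{L}_{\psi,\varphi}(\overline{\sigma})\|_{\mathcal{B}(L^1_\alpha(\mathbb{R}^{d+1}_+))}\leq \|\psi\|_{\alpha,\infty}\|\varphi\|_{\alpha,1}\|\sigma\|_{L^1_\alpha(\X)}.
\end{equation*}

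First I would use that, as $\mu_\alpha$ is $\sigma$-finite, $L^\infty_\alpha(\mathbb{R}^{d+1}_+)$ is the dual of $L^1_\alpha(\mathbb{R}^{d+1}_+)$ under the pairing $\langle\cdot,\cdot\rangle_{\alpha,2}$, so that for $f\in L^\infty_\alpha(\mathbb{R}^{d+1}_+)$,
\begin{equation*}
\|\mathcal{L}_{\varphi,\psi}(\sigma)(f)\|_{\alpha,\infty}=\sup_{\|g\|_{\alpha,1}\leq 1}\left|\langle \mathcal{L}_{\varphi,\psi}(\sigma)(f),g\rangle_{\alpha,2}\right|.
\end{equation*}
Invoking the adjoint identity (\ref{lphp}) at the endpoint pairing $(p,q)=(\infty,1)$, namely $\langle \mathcal{L}_{\varphi,\psi}(\sigma)(f),g\rangle_{\alpha,2}=\langle f,\mathcal{L}_{\psi,\varphi}(\overline{\sigma})(g)\rangle_{\alpha,2}$, and then Hölder's inequality together with the bound above, I obtain
\begin{equation*}
\left|\langle \mathcal{L}_{\varphi,\psi}(\sigma)(f),g\rangle_{\alpha,2}\right|\leq \|f\|_{\alpha,\infty}\|\mathcal{L}_{\psi,\varphi}(\overline{\sigma})(g)\|_{\alpha,1}\leq \|\varphi\|_{\alpha,1}\|\psi\|_{\alpha,\infty}\|\sigma\|_{L^1_\alpha(\X)}\|f\|_{\alpha,\infty}\|g\|_{\alpha,1}.
\end{equation*}
Taking the supremum over $\|g\|_{\alpha,1}\leq 1$ yields the claimed estimate, linearity being clear from the definition (\ref{two-waveloc}).

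The point needing most care is the legitimacy of applying (\ref{lphp}) at $(p,q)=(\infty,1)$: one should confirm that $\sigma\,\Phi^W_\varphi(f)\,\overline{\Phi^W_\psi(g)}$ is $\mu_\alpha$-integrable on $\X$ for $f\in L^\infty_\alpha(\mathbb{R}^{d+1}_+)$ and $g\in L^1_\alpha(\mathbb{R}^{d+1}_+)$, which is exactly what makes the weak form (\ref{weaktwo-waveloc}) finite and the Fubini interchange behind (\ref{lphp}) valid. Alternatively, one can bypass duality entirely and argue directly as in Proposition \ref{prop31}: writing $\psi_{a,x}(y)=a^{\alpha+1+\frac{d}{2}}\tau^\alpha_x\psi_a(y)$ and $\Phi^W_\varphi(f)(a,x)=a^{\alpha+1+\frac{d}{2}}\check f*\overline{\varphi_a}(x)$, one estimates $|\tau^\alpha_x\psi_a(y)|\leq\|\psi_a\|_{\alpha,\infty}$ by (\ref{ineqtransl}) and $|\check f*\overline{\varphi_a}(x)|\leq\|f\|_{\alpha,\infty}\|\varphi_a\|_{\alpha,1}$ by (\ref{invconvol2}) with the triple $(p,q,r)=(\infty,1,\infty)$, and then checks via (\ref{normLpfia}) that the powers of $a$ coming from the two prefactors $a^{\alpha+1+\frac{d}{2}}$, from $\|\psi_a\|_{\alpha,\infty}=a^{-(2\alpha+d+2)}\|\psi\|_{\alpha,\infty}$, from $\|\varphi_a\|_{\alpha,1}=\|\varphi\|_{\alpha,1}$, and from the measure $d\mu_\alpha(x)\,da/a^{2\alpha+d+3}$ combine to reproduce exactly $d\mu_\alpha(a,x)$, leaving precisely $\|\varphi\|_{\alpha,1}\|\psi\|_{\alpha,\infty}\|\sigma\|_{L^1_\alpha(\X)}\|f\|_{\alpha,\infty}$. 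This exponent bookkeeping is the only delicate part of the direct route.
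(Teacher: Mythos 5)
Your argument is correct, but your primary route is genuinely different from the paper's. The paper proves Proposition \ref{prop32} directly: for each fixed $y$ it bounds $|\Phi^W_\varphi(f)(a,x)|\leq a^{\alpha+1+\frac{d}{2}}\|f\|_{\alpha,\infty}\|\varphi_a\|_{\alpha,1}$ via (\ref{recontwave}) and (\ref{invconvol2}), bounds $|\psi_{a,x}(y)|\leq a^{\alpha+1+\frac{d}{2}}\|\psi_a\|_{\alpha,\infty}$ via (\ref{deffiax}) and (\ref{ineqtransl}), observes through (\ref{normLpfia}) that the powers of $a$ cancel, and integrates $|\sigma|$ over $\X$ --- which is precisely the ``alternative'' direct route and exponent bookkeeping you sketch at the end. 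Your main route instead obtains the result by duality: apply Proposition \ref{prop31} to $\mathcal{L}_{\psi,\varphi}(\overline{\sigma})$ (whose windows satisfy the hypotheses of that proposition with the roles swapped) and transfer the bound through the adjoint identity (\ref{lphp}) and the pairing of $L^\infty_\alpha$ with $L^1_\alpha$. This is legitimate and arguably cleaner, since it reuses Proposition \ref{prop31} and carries the constant over automatically; you also correctly isolate the one point needing justification, namely the absolute convergence of the integral in (\ref{weaktwo-waveloc}) for $f\in L^\infty_\alpha(\mathbb{R}^{d+1}_+)$ and $g\in L^1_\alpha(\mathbb{R}^{d+1}_+)$, which validates the Fubini step behind (\ref{lphp}) at the endpoint $(p,q)=(\infty,1)$. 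Note, though, that this convergence is checked by exactly the same pointwise bounds on $\Phi^W_\varphi(f)$ and $\Phi^W_\psi(g)$ that drive the direct proof, so the duality route repackages rather than avoids the estimates. Either version yields the stated inequality with the constant $\|\varphi\|_{\alpha,1}\|\psi\|_{\alpha,\infty}\|\sigma\|_{L^1_\alpha(\X)}$ (which, incidentally, the final display of the paper's own proof misprints as $\|\varphi\|_{\alpha,\infty}\|\psi\|_{\infty,1}$).
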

\begin{proof}
  Let $f$ be a function in $L^{\infty}_{\alpha}(\X)$. From the definition of the two-wavelet localization operator (\ref{two-waveloc})
 and as above, according to relations (\ref{deffiax}), (\ref{recontwave}),  (\ref{invconvol2}) and (\ref{ineqtransl}), we have for all $y\in \mathbb{R}^{d+1}_+$
\begin{eqnarray*}
  |\mathcal{L}_{\varphi,\psi}(\sigma)(f)(y)|&\leq & \int_{\X}
 |\sigma(a,x)|\,|\Phi^W_\varphi(f)(a,x)|\,|\psi_{a,x}(y)|d\mu_\alpha(a,x) \\
   &\leq & \|f\|_{\alpha,\infty} \|\varphi\|_{\alpha,1} \|\psi\|_{\alpha,\infty} \|\sigma\|_{L^{1}_{\alpha}(\X)}.
\end{eqnarray*}
Thus,
\begin{equation*}
  \|\mathcal{L}_{\varphi,\psi}(\sigma)\|_{\mathcal{B}(L^{\infty}_{\alpha}(\mathbb{R}^{d+1}_+))}\leq \|\varphi\|_{\alpha,\infty} \|\psi\|_{\infty,1}
  \|\sigma\|_{L^{1}_{\alpha}(\X)}.
\end{equation*}
\end{proof}
By interpolations of the results of Propositions \ref{prop31} and \ref{prop32}, we get the following result.
\begin{thm}\label{thm3.3}
  Let $\varphi$ and $\psi$ be functions in  $L^{1}_{\alpha}(\mathbb{R}^{d+1}_+)\cap L^{\infty}_{\alpha}(\mathbb{R}^{d+1}_+)$. Then for all $\sigma\in L^{1}_{\alpha}(\X)$, there exists a unique bounded linear operator $\mathcal{L}_{\varphi,\psi}(\sigma)$ from $L^{p}_{\alpha}(\mathbb{R}^{d+1}_+)$ onto itself with $1\leq p\leq \infty$, such that

\begin{equation*}
  \|\mathcal{L}_{\varphi,\psi}(\sigma)\|_{\mathcal{B}(L^{p}_{\alpha}(\mathbb{R}^{d+1}_+))}\leq \|\varphi\|_{\alpha,1}^\frac{1}{q} \|\psi\|_{\alpha,1}^\frac{1}{p} \|\varphi\|_{\alpha,\infty}^\frac{1}{p} \|\psi\|_{\alpha,\infty}^\frac{1}{q}
  \|\sigma\|_{L^{1}_{\alpha}(\X)}.
\end{equation*}
  \end{thm}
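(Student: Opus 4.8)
The plan is to derive Theorem \ref{thm3.3} from the two endpoint estimates of Propositions \ref{prop31} and \ref{prop32} by Riesz--Thorin (complex) interpolation. First I would observe that the assumption $\varphi,\psi\in L^1_\alpha(\mathbb{R}^{d+1}_+)\cap L^\infty_\alpha(\mathbb{R}^{d+1}_+)$ meets the hypotheses of \emph{both} earlier propositions at once: Proposition \ref{prop31} requires $\varphi\in L^\infty_\alpha$ and $\psi\in L^1_\alpha$, while Proposition \ref{prop32} requires $\varphi\in L^1_\alpha$ and $\psi\in L^\infty_\alpha$, and membership in the intersection supplies all four conditions. Hence, for every $\sigma\in L^1_\alpha(\X)$, the operator $\mathcal{L}_{\varphi,\psi}(\sigma)$ given by (\ref{two-waveloc}) is bounded on $L^1_\alpha(\mathbb{R}^{d+1}_+)$ with norm at most $\|\varphi\|_{\alpha,\infty}\|\psi\|_{\alpha,1}\|\sigma\|_{L^1_\alpha(\X)}$, and bounded on $L^\infty_\alpha(\mathbb{R}^{d+1}_+)$ with norm at most $\|\varphi\|_{\alpha,1}\|\psi\|_{\alpha,\infty}\|\sigma\|_{L^1_\alpha(\X)}$.

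Next I would regard $\mathcal{L}_{\varphi,\psi}(\sigma)$ as a single linear map defined by (\ref{two-waveloc}) on the overlap $L^1_\alpha\cap L^\infty_\alpha$, which is dense in $L^p_\alpha(\mathbb{R}^{d+1}_+)$ for $1\leq p<\infty$; on this common domain the two restrictions above coincide, so the hypotheses of the Riesz--Thorin theorem are in force. Interpolating between the endpoints $p_0=1$ and $p_1=\infty$ with parameter $\theta\in[0,1]$ determined by $\frac{1}{p}=\frac{1-\theta}{1}+\frac{\theta}{\infty}=1-\theta$, that is $\theta=1/q$, yields a unique bounded extension of $\mathcal{L}_{\varphi,\psi}(\sigma)$ to $L^p_\alpha(\mathbb{R}^{d+1}_+)$ (the case $p=\infty$ being already covered by Proposition \ref{prop32}), together with the geometric-mean estimate
\begin{equation*}
\|\mathcal{L}_{\varphi,\psi}(\sigma)\|_{\mathcal{B}(L^p_\alpha(\mathbb{R}^{d+1}_+))}\leq \|\mathcal{L}_{\varphi,\psi}(\sigma)\|_{\mathcal{B}(L^1_\alpha(\mathbb{R}^{d+1}_+))}^{1/p}\,\|\mathcal{L}_{\varphi,\psi}(\sigma)\|_{\mathcal{B}(L^\infty_\alpha(\mathbb{R}^{d+1}_+))}^{1/q}.
\end{equation*}
Uniqueness of the extension follows from the density of $L^1_\alpha\cap L^\infty_\alpha$ in $L^p_\alpha$.

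Finally I would substitute the two endpoint bounds into this inequality and tidy up the exponents. This produces the factor $\big(\|\varphi\|_{\alpha,\infty}\|\psi\|_{\alpha,1}\big)^{1/p}\big(\|\varphi\|_{\alpha,1}\|\psi\|_{\alpha,\infty}\big)^{1/q}$ multiplied by $\|\sigma\|_{L^1_\alpha(\X)}^{1/p}\|\sigma\|_{L^1_\alpha(\X)}^{1/q}$; since $\frac{1}{p}+\frac{1}{q}=1$, the two powers of $\|\sigma\|_{L^1_\alpha(\X)}$ recombine into the single factor $\|\sigma\|_{L^1_\alpha(\X)}$, and regrouping the remaining norms gives exactly $\|\varphi\|_{\alpha,1}^{1/q}\|\psi\|_{\alpha,1}^{1/p}\|\varphi\|_{\alpha,\infty}^{1/p}\|\psi\|_{\alpha,\infty}^{1/q}\|\sigma\|_{L^1_\alpha(\X)}$, as claimed. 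The only genuinely delicate point is the bookkeeping in invoking Riesz--Thorin: one must verify that (\ref{two-waveloc}) defines one and the same linear operator on $L^1_\alpha\cap L^\infty_\alpha$ so that a single operator is being interpolated, after which both the boundedness and the sharp norm bound are automatic; the exponent arithmetic is then routine.
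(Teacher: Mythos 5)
Your proposal is correct and follows essentially the same route as the paper, which simply invokes interpolation of Propositions \ref{prop31} and \ref{prop32} without further detail; your Riesz--Thorin bookkeeping (with $\theta=1/q$) reproduces the stated bound exactly. The only difference is that you spell out the density and uniqueness points that the paper leaves implicit.
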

In the following proposition, we generalize and we improve Proposition \ref{prop32}.
\begin{prop}\label{proppq}
  Let $\sigma\in L^{1}_{\alpha}(\X)$, $\psi\in L^{p}_{\alpha}(\mathbb{R}^{d+1}_+)$
  and  $\varphi\in L^{q}_{\alpha}(\mathbb{R}^{d+1}_+)$, for $1< p \leq \infty$ then the localization
operator $\mathcal{L}_{\varphi,\psi}(\sigma)$  is bounded linear operator from $L^{p}_{\alpha}(\mathbb{R}^{d+1}_+)$ onto itself and we have
\begin{equation*}
  \|\mathcal{L}_{\varphi,\psi}(\sigma)\|_{\mathcal{B}(L^{p}_{\alpha}(\mathbb{R}^{d+1}_+))}\leq \|\varphi\|_{\alpha,q} \|\psi\|_{\alpha,p}
  \|\sigma\|_{L^{1}_{\alpha}(\X)}.
\end{equation*}
\end{prop}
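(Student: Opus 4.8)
The plan is to bound $\mathcal{L}_{\varphi,\psi}(\sigma)(f)$ directly from its strong definition (\ref{two-waveloc}) by means of Minkowski's integral inequality, thereby reducing the whole estimate to two elementary scalar bounds: a pointwise bound on the analysing coefficient $\Phi^W_\varphi(f)(a,x)$ and the scaling estimate (\ref{Nfiaxp}) for the synthesising wavelet $\psi_{a,x}$. Linearity of $\mathcal{L}_{\varphi,\psi}(\sigma)$ is immediate from (\ref{two-waveloc}), so only the norm inequality needs attention.

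First, fixing $f\in L^p_\alpha(\mathbb{R}^{d+1}_+)$ and applying Minkowski's integral inequality in the variable $y$ to (\ref{two-waveloc}) gives
$$\|\mathcal{L}_{\varphi,\psi}(\sigma)(f)\|_{\alpha,p}\leq\int_{\X}|\sigma(a,x)|\,\big|\Phi^W_\varphi(f)(a,x)\big|\,\|\psi_{a,x}\|_{\alpha,p}\,d\mu_\alpha(a,x).$$
I would then estimate the two $(a,x)$-dependent factors separately. Writing $q=p/(p-1)$, so that $\tfrac1p+\tfrac1q=1$, the window $\varphi_{a,x}$ lies in $L^q_\alpha(\mathbb{R}^{d+1}_+)$; hence Hölder's inequality applied to the inner-product form (\ref{contwave}) together with the scaling bound (\ref{Nfiaxp}) taken with exponent $q$ yields
$$\big|\Phi^W_\varphi(f)(a,x)\big|=\big|\langle f,\varphi_{a,x}\rangle_{\alpha,2}\big|\leq\|f\|_{\alpha,p}\,\|\varphi_{a,x}\|_{\alpha,q}\leq a^{(2\alpha+d+2)(\frac1q-\frac12)}\,\|f\|_{\alpha,p}\,\|\varphi\|_{\alpha,q}.$$
Similarly, (\ref{Nfiaxp}) with exponent $p$ gives $\|\psi_{a,x}\|_{\alpha,p}\leq a^{(2\alpha+d+2)(\frac1p-\frac12)}\|\psi\|_{\alpha,p}$.

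The decisive point, and the one I would check with care, is that the two powers of $a$ combine to $a^{(2\alpha+d+2)(\frac1p+\frac1q-1)}=a^{0}=1$: the exponent vanishes \emph{exactly} because $p$ and $q$ are conjugate, so every trace of the dilation variable disappears and the integrand becomes a constant multiple of $|\sigma(a,x)|$. Inserting both bounds and extracting the norms of $f$, $\varphi$ and $\psi$ from the integral leaves $\|f\|_{\alpha,p}\|\varphi\|_{\alpha,q}\|\psi\|_{\alpha,p}\int_{\X}|\sigma(a,x)|\,d\mu_\alpha(a,x)$, and recognising the remaining integral as $\|\sigma\|_{L^1_\alpha(\X)}$ delivers the stated operator-norm bound.

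I would close by noting that the argument is uniform over $1<p\leq\infty$ under the convention $\tfrac1\infty=0$, with Minkowski's integral inequality persisting at $p=\infty$ as the triangle inequality for the essential supremum. In particular the endpoint $p=\infty$, $q=1$ reproduces and thereby generalises Proposition \ref{prop32}. The only genuinely delicate ingredient is the exact cancellation of the dilation weights; everything else is a routine application of Hölder's and Minkowski's inequalities together with the dilation identities already recorded.
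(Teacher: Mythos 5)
Your proof is correct, but it takes a genuinely different route from the paper's. The paper argues by duality: it fixes $f\in L^{p}_{\alpha}(\mathbb{R}^{d+1}_+)$, forms the linear functional $\mathcal{J}_f(g)=\langle g,\mathcal{L}_{\varphi,\psi}(\sigma)(f)\rangle_{\alpha,2}$ on $L^{q}_{\alpha}(\mathbb{R}^{d+1}_+)$, bounds $|\langle \mathcal{L}_{\varphi,\psi}(\sigma)(f),g\rangle_{\alpha,2}|$ through the weak form (\ref{weaktwo-waveloc}) by $\|\Phi^W_\varphi(f)\|_{L^{\infty}_{\alpha}(\X)}\,\|\Phi^W_\psi(g)\|_{L^{\infty}_{\alpha}(\X)}\,\|\sigma\|_{L^{1}_{\alpha}(\X)}$, controls the two sup-norms via the convolution representation (\ref{recontwave}) and Young's inequality (\ref{invconvol2}), and finally converts the functional norm into $\|\mathcal{L}_{\varphi,\psi}(\sigma)(f)\|_{\alpha,p}$ by the Riesz representation theorem. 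You instead work with the strong definition (\ref{two-waveloc}), apply Minkowski's integral inequality in $y$, and estimate the two factors separately by H\"older on (\ref{contwave}) and by the dilation bound (\ref{Nfiaxp}). Both arguments ultimately rest on the same cancellation of the dilation weights: in the paper's route the product $|\Phi^W_\varphi(f)(a,x)|\,|\Phi^W_\psi(g)(a,x)|$ carries the factor $a^{(2\alpha+d+2)(\frac1p+\frac1q-1)}=1$ just as your product of window norms does, and your explicit tracking of these exponents is in fact more careful than the paper's Remark containing (\ref{Nwcontp}), which states the individual sup-bounds without the $a$-power (harmless here precisely because the exponents cancel for conjugate $p,q$). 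What your approach buys is a self-contained justification that the defining integral (\ref{two-waveloc}) converges absolutely in $L^{p}_{\alpha}$, so the operator is directly well defined in the strong sense, and the argument even covers $p=1$; the paper's duality approach is the one that generalizes to situations where only the weak form is available. One small point to make explicit if you write this up: the identification of the inner-product form (\ref{contwave}) with the convolution form (\ref{recontwave}) for $f\in L^{p}_{\alpha}$, $p\neq 2$, since the paper takes (\ref{recontwave}) as the definition in that range.
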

\begin{proof}
  Let $f$ be a function in $L^{p}_{\alpha}(\mathbb{R}^{d+1}_+)$. We consider the linear functional
  \begin{eqnarray*}
  \begin{array}{lccl}
    \mathcal{J}_f:& L^{q}_{\alpha}(\mathbb{R}^{d+1}_+)  &\longrightarrow& \mathbb{C} \\
    &g &\longmapsto& \langle g,\mathcal{L}_{\varphi,\psi}(\sigma)(f)\rangle_{\alpha,2}.
  \end{array}
\end{eqnarray*}
According to relation (\ref{weaktwo-waveloc}), we have
\begin{eqnarray*}
 |\langle \mathcal{L}_{\varphi, \psi}(\sigma)(f), g \rangle_{\alpha,2}| &\leq& \int_{\X}
 |\sigma(a,x)|\,|\Phi^W_\varphi(f)(a,x)|\,|\overline{\Phi^W_\psi(g)(a,x)}|d\mu_\alpha(a,x) \\
  &\leq &  \|\Phi^W_\varphi(f)\|_{L^{\infty}_{\alpha}(\X)} \|\Phi^W_\psi(g)\|_{L^{\infty}_{\alpha}(\X)} \|\sigma\|_{L^{1}_{\alpha}(\X)}.
\end{eqnarray*}
Next, using the relations (\ref{recontwave}) and  (\ref{invconvol2}), we get
$$ |\langle \mathcal{L}_{\varphi, \psi}(\sigma)(f), g \rangle_{\alpha,2}|\leq  \|\sigma\|_{L^{1}_{\alpha}(\X)}\|\varphi\|_{\alpha,q} \|\psi\|_{\alpha,p} \|f\|_{\alpha,p} \|g\|_{\alpha,q}.$$
Therefore, the operator $\mathcal{J}_f$ is a continuous linear functional on $L^{q}_{\alpha}(\mathbb{R}^{d+1}_+)$, and we have
$$\|\mathcal{J}_f\|_{\mathcal{B}(L^{q}_{\alpha}(\mathbb{R}^{d+1}_+))}\leq \|\sigma\|_{L^{1}_{\alpha}(\X)}\|\varphi\|_{\alpha,q} \|\psi\|_{\alpha,p} \|f\|_{\alpha,p} .$$
Like that $\mathcal{J}_f(g)=\langle g,\mathcal{L}_{\varphi,\psi}(\sigma)(f)\rangle_{\alpha,2}$, then by the Riesz representation theorem, we have
$$  \|\mathcal{L}_{\varphi,\psi}(\sigma)(f)\|_{\alpha,p}= \|\mathcal{J}_f\|_{\mathcal{B}(L^{q}_{\alpha}(\mathbb{R}^{d+1}_+))}\leq \|\sigma\|_{L^{1}_{\alpha}(\X)}\|\varphi\|_{\alpha,q} \|\psi\|_{\alpha,p} \|f\|_{\alpha,p},$$
which completes the proof.
\end{proof}
By combining the results obtained in Propositions \ref{prop31} and \ref{proppq}, we have the following version of the $L^{p}_{\alpha}$-boundedness result.
\begin{thm}\label{thm6}
   Let $\sigma\in L^{1}_{\alpha}(\X)$, $\psi\in L^{p}_{\alpha}(\mathbb{R}^{d+1}_+)$
  and  $\varphi\in L^{q}_{\alpha}(\mathbb{R}^{d+1}_+)$, for $1\leq p \leq \infty$, then the localization
operator $\mathcal{L}_{\varphi,\psi}(\sigma)$  is a bounded and linear from $L^{p}_{\alpha}(\mathbb{R}^{d+1}_+)$ onto itself, and we have
\begin{equation*}
  \|\mathcal{L}_{\varphi,\psi}(\sigma)\|_{\mathcal{B}(L^{p}_{\alpha}(\mathbb{R}^{d+1}_+))}\leq \|\varphi\|_{\alpha,q} \|\psi\|_{\alpha,p}
  \|\sigma\|_{L^{1}_{\alpha}(\X)}.
\end{equation*}
\end{thm}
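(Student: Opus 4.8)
The plan is to obtain Theorem \ref{thm6} simply by assembling the two boundedness results already proved, treating the endpoint $p=1$ and the range $1<p\leq\infty$ separately; no fresh estimate is required.

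First I would note that linearity is immediate, since the defining integral (\ref{two-waveloc}) depends on $f$ only through the continuous wavelet transform $\Phi^W_\varphi(f)$, and both the map $f\mapsto\Phi^W_\varphi(f)$ (via (\ref{recontwave})) and the subsequent integration against $\sigma$ are linear in $f$. So the whole content of the theorem is the stated operator-norm bound.

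Next I would split according to the value of $p$. At the endpoint $p=1$ the conjugate exponent is $q=\infty$, so the hypotheses $\psi\in L^{p}_{\alpha}(\mathbb{R}^{d+1}_+)=L^{1}_{\alpha}(\mathbb{R}^{d+1}_+)$ and $\varphi\in L^{q}_{\alpha}(\mathbb{R}^{d+1}_+)=L^{\infty}_{\alpha}(\mathbb{R}^{d+1}_+)$ are precisely those of Proposition \ref{prop31}. Invoking that proposition yields boundedness on $L^{1}_{\alpha}(\mathbb{R}^{d+1}_+)$ with
\[
\|\mathcal{L}_{\varphi,\psi}(\sigma)\|_{\mathcal{B}(L^{1}_{\alpha}(\mathbb{R}^{d+1}_+))}\leq\|\varphi\|_{\alpha,\infty}\|\psi\|_{\alpha,1}\|\sigma\|_{L^{1}_{\alpha}(\X)},
\]
which, since $q=\infty$ and $p=1$, is exactly the asserted bound $\|\varphi\|_{\alpha,q}\|\psi\|_{\alpha,p}\|\sigma\|_{L^{1}_{\alpha}(\X)}$. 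For the complementary range $1<p\leq\infty$ the statement coincides word for word with Proposition \ref{proppq}, so I would cite it directly.

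Combining the two cases covers the full interval $1\leq p\leq\infty$ and gives the claimed inequality uniformly. There is essentially no obstacle here; the only point demanding attention is the bookkeeping at the endpoint, namely recognizing that the norms $\|\varphi\|_{\alpha,\infty}$ and $\|\psi\|_{\alpha,1}$ appearing in Proposition \ref{prop31} are exactly the $q=\infty$, $p=1$ specializations of the norms written in the theorem, so that the single expression $\|\varphi\|_{\alpha,q}\|\psi\|_{\alpha,p}\|\sigma\|_{L^{1}_{\alpha}(\X)}$ captures both regimes at once.
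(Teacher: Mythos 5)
Your proposal is correct and matches the paper exactly: the paper states Theorem \ref{thm6} as an immediate combination of Proposition \ref{prop31} (the endpoint $p=1$, where $q=\infty$) and Proposition \ref{proppq} (the range $1<p\leq\infty$), with no further argument. Your bookkeeping at the endpoint is the only content, and it is handled correctly.
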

According to Schur technique, we can obtain an $L_\alpha^p$-boundedness result as in the previous
Theorem with crude estimate of the norm $ \|\mathcal{L}_{\varphi,\psi}(\sigma)\|_{\mathcal{B}(L^{p}_{\alpha}(\mathbb{R}^{d+1}_+))}$.

\begin{thm}\label{thmR}
  Let $\sigma\in L^{1}_{\alpha}(\X)$, $\varphi$ and $\psi$ in  $L^{1}_{\alpha}(\mathbb{R}^{d+1}_+)\cap L^{\infty}_{\alpha}(\mathbb{R}^{d+1}_+)$. Then there exists a unique bounded linear operator  $\mathcal{L}_{\varphi,\psi}(\sigma)$ from $L^{p}_{\alpha}(\mathbb{R}^{d+1}_+)$ onto itself with $1\leq p\leq \infty$ and we have
  \begin{equation*}
  \|\mathcal{L}_{\varphi,\psi}(\sigma)\|_{\mathcal{B}(L^{p}_{\alpha}(\mathbb{R}^{d+1}_+))}\leq \max\left( \|\varphi\|_{\alpha,1} \|\psi\|_{\alpha,\infty}, \|\varphi\|_{\alpha,\infty} \|\psi\|_{\alpha,1}\right)
  \|\sigma\|_{L^{1}_{\alpha}(\X)}.
\end{equation*}
  \end{thm}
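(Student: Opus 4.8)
The plan is to realize $\mathcal{L}_{\varphi,\psi}(\sigma)$ as an integral operator and then invoke the Schur test, as suggested by the remark preceding the statement. Starting from the weak formulation (\ref{weaktwo-waveloc}) together with the inner-product expression (\ref{contwave}), I would substitute $\Phi^W_\varphi(f)(a,x)=\int_{\mathbb{R}^{d+1}_+}f(z)\overline{\varphi_{a,x}(z)}\,d\mu_\alpha(z)$ into the definition (\ref{two-waveloc}) and interchange the order of integration to write
\begin{equation*}
\mathcal{L}_{\varphi,\psi}(\sigma)(f)(y)=\int_{\mathbb{R}^{d+1}_+}K(y,z)\,f(z)\,d\mu_\alpha(z),\qquad
K(y,z)=\int_{\X}\sigma(a,x)\,\overline{\varphi_{a,x}(z)}\,\psi_{a,x}(y)\,d\mu_\alpha(a,x).
\end{equation*}
The interchange is legitimate because the same crude bounds established below show that the underlying multiple integral is absolutely convergent; this identifies $\mathcal{L}_{\varphi,\psi}(\sigma)$ with the integral operator of kernel $K$.

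Next I would verify the two hypotheses of the Schur test. For the first, pulling the absolute value inside gives
\begin{equation*}
\int_{\mathbb{R}^{d+1}_+}|K(y,z)|\,d\mu_\alpha(z)\leq\int_{\X}|\sigma(a,x)|\,\|\varphi_{a,x}\|_{\alpha,1}\,|\psi_{a,x}(y)|\,d\mu_\alpha(a,x),
\end{equation*}
and bounding $|\psi_{a,x}(y)|\leq\|\psi_{a,x}\|_{\alpha,\infty}$ uniformly in $y$. The decisive point is the scaling inequality (\ref{Nfiaxp}): it yields $\|\varphi_{a,x}\|_{\alpha,1}\leq a^{\alpha+1+\frac d2}\|\varphi\|_{\alpha,1}$ and $\|\psi_{a,x}\|_{\alpha,\infty}\leq a^{-(\alpha+1+\frac d2)}\|\psi\|_{\alpha,\infty}$, so that the two powers of $a$ cancel and their product is independent of $a$. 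Hence, using $\int_{\X}|\sigma|\,d\mu_\alpha=\|\sigma\|_{L^1_\alpha(\X)}$,
\begin{equation*}
\sup_{y}\int_{\mathbb{R}^{d+1}_+}|K(y,z)|\,d\mu_\alpha(z)\leq\|\varphi\|_{\alpha,1}\,\|\psi\|_{\alpha,\infty}\,\|\sigma\|_{L^1_\alpha(\X)}=:M_1.
\end{equation*}
Symmetrically, integrating $|K(y,z)|$ in $y$ and bounding $|\varphi_{a,x}(z)|\leq\|\varphi_{a,x}\|_{\alpha,\infty}$ gives, by the same cancellation of $a$-powers,
\begin{equation*}
\sup_{z}\int_{\mathbb{R}^{d+1}_+}|K(y,z)|\,d\mu_\alpha(y)\leq\|\varphi\|_{\alpha,\infty}\,\|\psi\|_{\alpha,1}\,\|\sigma\|_{L^1_\alpha(\X)}=:M_2.
\end{equation*}

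With both Schur conditions in hand, the Schur test provides the bound $\|\mathcal{L}_{\varphi,\psi}(\sigma)\|_{\mathcal{B}(L^p_\alpha(\mathbb{R}^{d+1}_+))}\leq M_1^{1/q}M_2^{1/p}$ for every $1\leq p\leq\infty$, and since $M_1^{1/q}M_2^{1/p}\leq\max(M_1,M_2)^{1/q+1/p}=\max(M_1,M_2)$, the stated crude estimate follows at once. The operator is first defined on $L^1_\alpha\cap L^\infty_\alpha(\mathbb{R}^{d+1}_+)$, where the series of integral manipulations above is valid and the kernel representation holds; since this space is dense in $L^p_\alpha$ for $1\leq p<\infty$, the Schur bound extends $\mathcal{L}_{\varphi,\psi}(\sigma)$ to a unique bounded operator there, while the endpoint $p=\infty$ is recovered by duality against the $p=1$ case (using the adjoint identity (\ref{lphp})). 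I expect the only genuine obstacle to be bookkeeping: correctly tracking the scaling exponents from (\ref{Nfiaxp}) so that the powers of $a$ coming from the two windows cancel, and thereby justifying the Fubini interchange that turns $\mathcal{L}_{\varphi,\psi}(\sigma)$ into an honest integral operator; once this cancellation is in place the Schur test is entirely routine.
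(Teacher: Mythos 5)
Your proposal is correct and follows essentially the same route as the paper: the same kernel representation $K(y,z)=\int_{\X}\sigma(a,x)\overline{\varphi_{a,x}(z)}\psi_{a,x}(y)\,d\mu_\alpha(a,x)$, the same two Schur bounds $M_1=\|\varphi\|_{\alpha,1}\|\psi\|_{\alpha,\infty}\|\sigma\|_{L^1_\alpha(\X)}$ and $M_2=\|\varphi\|_{\alpha,\infty}\|\psi\|_{\alpha,1}\|\sigma\|_{L^1_\alpha(\X)}$, and the same conclusion via Schur's lemma. You merely make explicit the cancellation of the $a$-powers coming from (\ref{Nfiaxp}), which the paper dismisses as ``easy to see.''
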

\begin{proof}
We put the function $\mathcal{R}$ defined on $\mathbb{R}^{d+1}_+\times\mathbb{R}^{d+1}_+$ by
$$\mathcal{R}(y,z)=\int_{\X}
\sigma(a,x)\overline{\varphi_{a,x}(z)} \psi_{a,x}(y) d\mu_{\alpha}(a,x).$$
Then the  Weinstein two-wavelet localization operator  can be written
in terms of $\mathcal{R}(y,z)$ as follows
$$\mathcal{L}_{\varphi,\psi}(\sigma)(f)(y)=\int_{\mathbb{R}_{+}^{d+1}}\mathcal{R}(y,z)f(z) d\mu_{\alpha}(z).$$
Next, it is easy to see that
$$\int_{\mathbb{R}_{+}^{d+1}}|\mathcal{R}(y,z)| d\mu_{\alpha}(y)\leq \|\varphi\|_{\alpha,\infty} \|\psi\|_{\alpha,1}\|\sigma\|_{L^{1}_{\alpha}(\X)},\quad z\in \mathbb{R}_{+}^{d+1}, $$
and
$$\int_{\mathbb{R}_{+}^{d+1}}|\mathcal{R}(y,z)| d\mu_{\alpha}(z)\leq \|\varphi\|_{\alpha,1} \|\psi\|_{\alpha,\infty} \|\sigma\|_{L^{1}_{\alpha}(\X)}, \quad y\in \mathbb{R}_{+}^{d+1}.$$
Then by Schur Lemma (cf.\cite{folland1995introduction}), we can conclude that the localization operator $\mathcal{L}_{\varphi,\psi}(\sigma)$ is bounded and linear from $L^{p}_{\alpha}(\mathbb{R}^{d+1}_+)$ onto itself for all $1\leq p\leq \infty$, and we have
\begin{equation*}
  \|\mathcal{L}_{\varphi,\psi}(\sigma)\|_{\mathcal{B}(L^{p}_{\alpha}(\mathbb{R}^{d+1}_+))}\leq \max\left( \|\varphi\|_{\alpha,1} \|\psi\|_{\alpha,\infty}, \|\varphi\|_{\alpha,\infty} \|\psi\|_{\alpha,1}\right)
  \|\sigma\|_{L^{1}_{\alpha}(\X)}.
\end{equation*}
\end{proof}
The previous Theorem tells us that the unique bounded linear operator on the spaces $L^{p}_{\alpha}(\mathbb{R}^{d+1}_+),$ $1\leq p\leq \infty$, obtained in Theorem \ref{thmR},  is in fact the
integral operator on $L^{p}_{\alpha}(\mathbb{R}^{d+1}_+),$ $1\leq p\leq \infty$ with kernel $\mathcal{R}$.

Subsequently, we can now state and prove the main result in this subsection.

\begin{thm}\label{thm3.7}
 Let $\sigma\in L^{r}_{\alpha}(\X)$, $r\in [1,2]$, and  $\varphi,\psi$ in  $L^{1}_{\alpha}(\mathbb{R}^{d+1}_+)\cap L^{2}_{\alpha}(\mathbb{R}^{d+1}_+)\cap L^{\infty}_{\alpha}(\mathbb{R}^{d+1}_+)$. Then there exists a unique bounded linear operator  $\mathcal{L}_{\varphi,\psi}(\sigma)$ from $L^{p}_{\alpha}(\mathbb{R}^{d+1}_+)$ onto itself for all $p\in [r,r']$ and we have
  \begin{equation*}
  \|\mathcal{L}_{\varphi,\psi}(\sigma)\|_{\mathcal{B}(L^{p}_{\alpha}(\mathbb{R}^{d+1}_+))}\leq K_1^tK_2^{1-t}
  \|\sigma\|_{L^{r}_{\alpha}(\X)},
\end{equation*}
where
\begin{eqnarray*}
  K_1 &=& \left(\|\varphi\|_{\alpha,\infty} \|\psi\|_{\alpha,1}\right)^{\frac{2}{r}-1} \left(\sqrt{C_\varphi C_\psi}\|\varphi\|_{\alpha,2} \|\psi\|_{\alpha,2}\right)^\frac{1}{r'}, \\
  K_2 &=& \left(\|\varphi\|_{\alpha,1} \|\psi\|_{\alpha,\infty}\right)^{\frac{2}{r}-1} \left(\sqrt{C_\varphi C_\psi}\|\varphi\|_{\alpha,2} \|\psi\|_{\alpha,2}\right)^\frac{1}{r'},
\end{eqnarray*}
and $$\frac{t}{r}+\frac{1-t}{r'}=\frac{1}{p}.$$
\end{thm}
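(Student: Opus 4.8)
The plan is to derive the inequality by interpolation, built on three base estimates for the trilinear form
$$B(\sigma,f,g)=\langle\mathcal{L}_{\varphi,\psi}(\sigma)(f),g\rangle_{\alpha,2}=\int_{\X}\sigma(a,x)\,\Phi^W_\varphi(f)(a,x)\,\overline{\Phi^W_\psi(g)(a,x)}\,d\mu_\alpha(a,x),$$
viewed as linear in each of $\sigma$, $f$, $g$. Reading Proposition \ref{prop31} through the weak formulation \eqref{weaktwo-waveloc} and pairing $\mathcal{L}_{\varphi,\psi}(\sigma)(f)\in L^1_\alpha$ with $g\in L^\infty_\alpha$ gives
$$|B(\sigma,f,g)|\leq \|\varphi\|_{\alpha,\infty}\|\psi\|_{\alpha,1}\,\|\sigma\|_{L^1_\alpha(\X)}\,\|f\|_{\alpha,1}\|g\|_{\alpha,\infty},$$
and symmetrically Proposition \ref{prop32}, pairing with $g\in L^1_\alpha$, gives
$$|B(\sigma,f,g)|\leq \|\varphi\|_{\alpha,1}\|\psi\|_{\alpha,\infty}\,\|\sigma\|_{L^1_\alpha(\X)}\,\|f\|_{\alpha,\infty}\|g\|_{\alpha,1}.$$
The third and only genuinely new ingredient is an $L^2$ estimate valid for a symbol $\sigma\in L^2_\alpha(\X)$.

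To establish it, fix $f,g\in L^2_\alpha(\mathbb{R}^{d+1}_+)$ and apply the Cauchy--Schwarz inequality with $\sigma\in L^2_\alpha(\X)$ tested against the product $\Phi^W_\varphi(f)\,\overline{\Phi^W_\psi(g)}$,
$$|B(\sigma,f,g)|\leq \|\sigma\|_{L^2_\alpha(\X)}\left(\int_{\X}|\Phi^W_\varphi(f)(a,x)|^2\,|\Phi^W_\psi(g)(a,x)|^2\,d\mu_\alpha(a,x)\right)^{1/2}.$$
I would then bound the last integral using both the uniform estimate \eqref{Nwcont2}, namely $\|\Phi^W_\varphi(f)\|_{\alpha,\infty}\leq\|f\|_{\alpha,2}\|\varphi\|_{\alpha,2}$ and $\|\Phi^W_\psi(g)\|_{\alpha,\infty}\leq\|g\|_{\alpha,2}\|\psi\|_{\alpha,2}$, together with the Parseval identity \eqref{ParsevalPsi} specialised to $\int_{\X}|\Phi^W_\varphi(f)|^2\,d\mu_\alpha=C_\varphi\|f\|_{\alpha,2}^2$ and $\int_{\X}|\Phi^W_\psi(g)|^2\,d\mu_\alpha=C_\psi\|g\|_{\alpha,2}^2$. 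Writing $|\Phi^W_\varphi(f)|^2|\Phi^W_\psi(g)|^2\leq\|\Phi^W_\varphi(f)\|_{\alpha,\infty}\|\Phi^W_\psi(g)\|_{\alpha,\infty}\,|\Phi^W_\varphi(f)||\Phi^W_\psi(g)|$ and applying Cauchy--Schwarz to the remaining product yields
$$\int_{\X}|\Phi^W_\varphi(f)|^2|\Phi^W_\psi(g)|^2\,d\mu_\alpha\leq \sqrt{C_\varphi C_\psi}\,\|\varphi\|_{\alpha,2}\|\psi\|_{\alpha,2}\,\|f\|_{\alpha,2}^2\|g\|_{\alpha,2}^2,$$
so that $\mathcal{L}_{\varphi,\psi}(\sigma)$ is bounded on $L^2_\alpha$ with norm at most $\big(\sqrt{C_\varphi C_\psi}\,\|\varphi\|_{\alpha,2}\|\psi\|_{\alpha,2}\big)^{1/2}\|\sigma\|_{L^2_\alpha(\X)}$.

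With the three base estimates in hand I would interpolate in two stages. Interpolating the $L^1_\alpha(\X)\to\mathcal{B}(L^1_\alpha)$ bound of Proposition \ref{prop31} against the $L^2_\alpha(\X)\to\mathcal{B}(L^2_\alpha)$ bound above, at the parameter $\theta=2/r'$ (which carries $L^1_\alpha,L^2_\alpha$ to $L^r_\alpha$ in the symbol and $L^1_\alpha,L^2_\alpha$ to $L^r_\alpha$ in the function), produces boundedness of $\mathcal{L}_{\varphi,\psi}(\sigma)$ on $L^r_\alpha$ with norm $\leq K_1\|\sigma\|_{L^r_\alpha(\X)}$; symmetrically, interpolating Proposition \ref{prop32} against the $L^2$ bound produces boundedness on $L^{r'}_\alpha$ with norm $\leq K_2\|\sigma\|_{L^r_\alpha(\X)}$. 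Since both the symbol space and the operator target move simultaneously, the rigorous device here is Stein's interpolation theorem for analytic families, equivalently multilinear complex interpolation of $B(\sigma,f,g)$, whose three base estimates sit at the vertices $(1,1,0)$, $(1,0,1)$ and $(\tfrac12,\tfrac12,\tfrac12)$ in the reciprocal-exponent cube. Then, with $\sigma\in L^r_\alpha(\X)$ fixed, I apply the Riesz--Thorin theorem to the single operator $\mathcal{L}_{\varphi,\psi}(\sigma)$ between the endpoints $L^r_\alpha$ and $L^{r'}_\alpha$: for $p\in[r,r']$ with $\tfrac{t}{r}+\tfrac{1-t}{r'}=\tfrac1p$ this gives
$$\|\mathcal{L}_{\varphi,\psi}(\sigma)\|_{\mathcal{B}(L^p_\alpha(\mathbb{R}^{d+1}_+))}\leq \big(K_1\|\sigma\|_{L^r_\alpha(\X)}\big)^t\big(K_2\|\sigma\|_{L^r_\alpha(\X)}\big)^{1-t}=K_1^tK_2^{1-t}\|\sigma\|_{L^r_\alpha(\X)},$$
and uniqueness of the bounded extension follows exactly as in Theorems \ref{thm3.3} and \ref{thmR}.

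The main obstacle is the interpolation bookkeeping rather than any isolated hard estimate. One must check that the target triple $(\tfrac1r,\tfrac1p,\tfrac1q)$ lies in the convex hull of the three vertices precisely when $p\in[r,r']$, and that the barycentric weights $\lambda_1=\tfrac1p-\tfrac1{r'}$, $\lambda_2=\tfrac1q-\tfrac1{r'}$, $\lambda_3=\tfrac2{r'}$ recombine the three base constants into exactly $K_1^tK_2^{1-t}$; a direct computation confirms $\lambda_1=t(1-\theta)$, $\lambda_2=(1-t)(1-\theta)$, $\lambda_3=\theta$, which is why the final bound is symmetric in the two endpoint constants. The secondary technical point is justifying interpolation when domain and range both vary, and this is cleanest to phrase throughout via the trilinear form $B(\sigma,f,g)$, so that the $L^2$ estimate — the sole place where the wavelet constants $C_\varphi,C_\psi$ and Parseval's formula \eqref{ParsevalPsi} enter — carries the essential analytic content.
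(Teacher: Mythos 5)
Your proof is correct and follows essentially the same route as the paper: the same three base estimates, multilinear interpolation of the bilinear/trilinear form to the endpoints $L^{r}_{\alpha}$ and $L^{r'}_{\alpha}$ with constants $K_1$ and $K_2$, and Riesz--Thorin between them. The only departures are cosmetic: you prove the $L^{2}_{\alpha}(\X)$-symbol estimate directly from (\ref{Nwcont2}) and the Parseval formula (\ref{ParsevalPsi}) where the paper imports it from \cite[Theorem 3.1]{mejjaoli2017new}, and you reach the $L^{r'}_{\alpha}$ endpoint by a second interpolation from Proposition \ref{prop32} rather than by the duality $\mathcal{L}_{\varphi,\psi}^{*}(\sigma)=\mathcal{L}_{\psi,\varphi}(\overline{\sigma})$ used in the paper --- both yield the same constants.
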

\begin{proof}
  Consider the linear functional
  \begin{eqnarray*}
  \begin{array}{lccl}
    \mathcal{J}: (L^{1}_{\alpha}(\X)\cap L^{2}_{\alpha}(\X)\times (L^{1}_{\alpha}(\mathbb{R}^{d+1}_+)\,\cap & \!\!\! L^{2}_{\alpha}(\mathbb{R}^{d+1}_+))& \longrightarrow & L^{1}_{\alpha}(\mathbb{R}^{d+1}_+)\cap L^{2}_{\alpha}(\mathbb{R}^{d+1}_+) \\
& (\sigma,f) &\longmapsto & \mathcal{L}_{\varphi,\psi}(\sigma)(f). \end{array}
  \end{eqnarray*}
  According to Proposition \ref{prop31}, we obtain
  \begin{equation*}
    \|\mathcal{J}(\sigma,f)\|_{\alpha,1}\leq \|\varphi\|_{\alpha,\infty} \|\psi\|_{\alpha,1} \|f\|_{\alpha,1} \|\sigma\|_{L^{1}_{\alpha}(\X)}
  \end{equation*}
  and from \cite[Theorem 3.1]{mejjaoli2017new}, we have
   \begin{equation*}
    \|\mathcal{J}(\sigma,f)\|_{\alpha,2}\leq \left(\sqrt{C_{\varphi}C_{\psi}} \|\varphi\|_{\alpha,2} \|\psi\|_{\alpha,2}\right)^\frac{1}{2} \|f\|_{\alpha,2} \|\sigma\|_{L^{2}_{\alpha}(\X)}.
  \end{equation*}
Therefore, by  the multi-linear interpolation theory \cite[ Section 10.1]{calderon1964intermediate}, we obtain a unique bounded linear operator
 $$ \mathcal{J}: L^{r}_{\alpha}(\X)\times L^{r}_{\alpha}(\mathbb{R}^{d+1}_+) \longrightarrow L^{r}_{\alpha}(\mathbb{R}^{d+1}_+) $$
such that
\begin{equation}\label{Jr}
  \|\mathcal{J}(\sigma,f)\|_{\alpha,r}\leq K_1\|f\|_{\alpha,r} \|\sigma\|_{L^{r}_{\alpha}(\X)},
\end{equation}
where
$$K_1= \left(\|\varphi\|_{\alpha,\infty} \|\psi\|_{\alpha,1}\right)^\theta \left(\sqrt{C_{\varphi}C_{\psi}}\|\varphi\|_{\alpha,2} \|\psi\|_{\alpha,2}\right)^\frac{1-\theta}{2}$$
and
$$\frac{\theta}{1}+\frac{1-\theta}{2}=\frac{1}{r}.$$
By the definition of the linear functional $\mathcal{J}$, we have
  \begin{equation}\label{Lbr}
  \|\mathcal{L}_{\varphi,\psi}(\sigma)\|_{\mathcal{B}(L^{r}_{\alpha}(\mathbb{R}^{d+1}_+))}\leq \left(\|\varphi\|_{\alpha,\infty} \|\psi\|_{\alpha,1}\right)^{\frac{2}{r}-1} \left(\sqrt{C_{\varphi}C_{\psi}}\|\varphi\|_{\alpha,2} \|\psi\|_{\alpha,2}\right)^\frac{1}{r'}
  \|\sigma\|_{L^{r}_{\alpha}(\X)}.
\end{equation}
Like the adjoint of $\mathcal{L}_{\varphi,\psi}(\sigma)$ is $\mathcal{L}_{\psi,\varphi}(\overline{\sigma})$, therefore $\mathcal{L}_{\varphi,\psi}(\sigma)$ is a bounded linear map on $L^{r'}_{\alpha}(\mathbb{R}^{d+1}_+)$ with
its operator norm
 \begin{equation}\label{Jr'}
  \|\mathcal{L}_{\varphi,\psi}(\sigma)\|_{\mathcal{B}(L^{r'}_{\alpha}(\mathbb{R}^{d+1}_+))}= \|\mathcal{L}_{\psi,\varphi}(\overline{\sigma})\|_{\mathcal{B}(L^{r}_{\alpha}(\mathbb{R}^{d+1}_+))} \leq K_2 \|\sigma\|_{L^{r}_{\alpha}(\X)},
\end{equation}
where
$$ K_2 = \left(\|\varphi\|_{\alpha,1} \|\psi\|_{\alpha,\infty}\right)^{\frac{2}{r}-1} \left(\sqrt{C_{\varphi}C_{\psi}}\|\varphi\|_{\alpha,2} \|\psi\|_{\alpha,2}\right)^\frac{1}{r'}.$$
Finally, by interpolation of (\ref{Lbr}) and (\ref{Jr'}), we obtain that, for all $p\in [r,r']$,
\begin{equation*}
  \|\mathcal{L}_{\varphi,\psi}(\sigma)\|_{\mathcal{B}(L^{p}_{\alpha}(\mathbb{R}^{d+1}_+))}\leq K_1^tK_2^{1-t}
  \|\sigma\|_{L^{r}_{\alpha}(\X)},
\end{equation*}
with
$$\frac{t}{r}+\frac{1-t}{r'}=\frac{1}{p}.$$
\end{proof}

\subsection{$L^p_\alpha$-Compactness of $\mathcal{L}_{\varphi,\psi}(\sigma)$}

In this subsection, we establish the compactness of the Weinstein two-wavelet localization operators $\mathcal{L}_{\varphi,\psi}(\sigma)$ on $L^{p}_{\alpha}(\mathbb{R}^{d+1}_+)$, $1\leq p\leq \infty$. Let us start with the following proposition.

\begin{prop}\label{prop3.15}
  Under the same assumptions of Theorem \ref{thm3.3}, the Weinstein two-wavelet localization operator
  $$\mathcal{L}_{\varphi,\psi}(\sigma):L^{1}_{\alpha}(\mathbb{R}^{d+1}_+)\longrightarrow L^{1}_{\alpha}(\mathbb{R}^{d+1}_+)$$
  is compact.
\end{prop}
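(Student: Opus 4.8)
The plan is to combine the boundedness estimate of Proposition \ref{prop31} with the fact that the compact operators form a closed subspace of $\mathcal{B}(L^1_\alpha(\mathbb{R}^{d+1}_+))$, and then to reduce, by density, to a class of symbols for which compactness can be checked directly. First I would read Proposition \ref{prop31} as the statement that $\sigma \mapsto \mathcal{L}_{\varphi,\psi}(\sigma)$ is a bounded linear map from $L^1_\alpha(\X)$ into $\mathcal{B}(L^1_\alpha(\mathbb{R}^{d+1}_+))$, with $\|\mathcal{L}_{\varphi,\psi}(\sigma)\|_{\mathcal{B}(L^1_\alpha(\mathbb{R}^{d+1}_+))} \le \|\varphi\|_{\alpha,\infty}\|\psi\|_{\alpha,1}\|\sigma\|_{L^1_\alpha(\X)}$. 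Since the space $\mathcal{K}(L^1_\alpha(\mathbb{R}^{d+1}_+))$ of compact operators is a closed subspace of $\mathcal{B}(L^1_\alpha(\mathbb{R}^{d+1}_+))$ for the operator-norm topology, the preimage $S = \{\sigma \in L^1_\alpha(\X): \mathcal{L}_{\varphi,\psi}(\sigma)\ \text{is compact}\}$ is a closed linear subspace of $L^1_\alpha(\X)$. Hence it suffices to show that $S$ contains a dense subset of $L^1_\alpha(\X)$, for then $S$ is the whole space.

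Next I would take the dense subset to be the simple functions $\sigma = \sum_{j=1}^N c_j \chi_{E_j}$ with each $E_j \subset \X$ of finite $\mu_\alpha$-measure; these are dense in $L^1_\alpha(\X)$, and by linearity of $\sigma \mapsto \mathcal{L}_{\varphi,\psi}(\sigma)$ it is enough to treat $\sigma = \chi_E$ with $\mu_\alpha(E) < \infty$. For such a symbol I would prove compactness by exhibiting $\mathcal{L}_{\varphi,\psi}(\chi_E)$ as an operator-norm limit of finite-rank operators (the easy direction: any such limit is automatically compact). After truncating $E$ to a relatively compact region $E \cap \{\varepsilon \le a \le R,\ \|x\| \le R\}$ (the tail being small in $\mu_\alpha$-measure, hence negligible in $\mathcal{B}(L^1_\alpha(\mathbb{R}^{d+1}_+))$-norm by the estimate above), I would partition the region into small measurable cells $E_1,\dots,E_M$, choose $(a_k,x_k)\in E_k$, and set
$$\mathcal{L}_M(f)(y) = \sum_{k=1}^M \mu_\alpha(E_k)\, \langle f,\varphi_{a_k,x_k}\rangle_{\alpha,2}\, \psi_{a_k,x_k}(y).$$
This is genuinely of finite rank on $L^1_\alpha(\mathbb{R}^{d+1}_+)$, since $f\mapsto \langle f,\varphi_{a_k,x_k}\rangle_{\alpha,2}$ is a bounded functional there (because $\varphi_{a_k,x_k}\in L^\infty_\alpha$) and each $\psi_{a_k,x_k}\in L^1_\alpha$.

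The main obstacle will be to control $\mathcal{L}_{\varphi,\psi}(\chi_E) - \mathcal{L}_M$ in the $\mathcal{B}(L^1_\alpha(\mathbb{R}^{d+1}_+))$-norm, and this is where continuity of the families $(a,x)\mapsto \varphi_{a,x}$ and $(a,x)\mapsto \psi_{a,x}$ enters. Writing out the two kernels and using $|\langle f,\varphi_{a,x}\rangle_{\alpha,2}| \le \|f\|_{\alpha,1}\|\varphi_{a,x}\|_{\alpha,\infty}$ together with the $\|\psi_{a,x}\|_{\alpha,1}$-bounds (both uniform on the truncated region via (\ref{deffiax}), (\ref{normLpfia}) and (\ref{ineqtransl})), the error is dominated by the oscillation of $(a,x)\mapsto \varphi_{a,x}$ in $L^\infty_\alpha$ and of $(a,x)\mapsto \psi_{a,x}$ in $L^1_\alpha$ across each cell. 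I would therefore need the uniform continuity of these two maps on compact subsets of $\X$, which reduces to continuity of the dilation $\varphi\mapsto\varphi_a$ in (\ref{fia}) and of the generalized translation $\tau^\alpha_x$ in the relevant norms; I would verify this first for $\varphi,\psi$ in a dense nice class such as $\mathcal{S}_*(\mathbb{R}^{d+1})$ or $\mathcal{D}_*(\mathbb{R}^{d+1})$, then transfer it to general $\varphi,\psi\in L^1_\alpha\cap L^\infty_\alpha$ by one further approximation, again governed by Proposition \ref{prop31}. Taking the cells fine enough makes these oscillations uniformly small, hence $\|\mathcal{L}_{\varphi,\psi}(\chi_E)-\mathcal{L}_M\|_{\mathcal{B}(L^1_\alpha(\mathbb{R}^{d+1}_+))}$ arbitrarily small; compactness of $\mathcal{L}_{\varphi,\psi}(\chi_E)$ follows, and with it, via the closedness-and-density argument, the full statement.
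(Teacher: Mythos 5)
Your overall architecture is sound as far as it goes: Proposition \ref{prop31} does show that $\sigma\mapsto\mathcal{L}_{\varphi,\psi}(\sigma)$ is bounded from $L^1_\alpha(\X)$ into $\mathcal{B}(L^{1}_{\alpha}(\mathbb{R}^{d+1}_+))$, compact operators form a norm-closed subspace, and simple functions are dense in $L^1_\alpha(\X)$, so it would indeed suffice to treat $\sigma=\chi_E$ with $\mu_\alpha(E)<\infty$. The gap is in the step where you control $\mathcal{L}_{\varphi,\psi}(\chi_E)-\mathcal{L}_M$ in operator norm. Since $f$ ranges over the unit ball of $L^{1}_{\alpha}(\mathbb{R}^{d+1}_+)$, the only bound available for $\sup_{\|f\|_{\alpha,1}\leq1}|\langle f,\varphi_{a,x}-\varphi_{a_k,x_k}\rangle_{\alpha,2}|$ is $\|\varphi_{a,x}-\varphi_{a_k,x_k}\|_{\alpha,\infty}$, so you genuinely need $(a,x)\mapsto\varphi_{a,x}$ to be continuous into $L^{\infty}_{\alpha}(\mathbb{R}^{d+1}_+)$. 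That is false for a general window $\varphi\in L^{1}_{\alpha}\cap L^{\infty}_{\alpha}$: already for ordinary translation in the first $d$ variables (which is exactly what $\tau^\alpha_x$ does in $x'$) one has $\|\chi_{[0,1]^d}(\cdot-x')-\chi_{[0,1]^d}\|_{\infty}=1$ for every $x'\neq0$. Your proposed repair --- prove the continuity for $\varphi$ in $\mathcal{S}_*(\mathbb{R}^{d+1})$ or $\mathcal{D}_*(\mathbb{R}^{d+1})$ and transfer by an approximation ``governed by Proposition \ref{prop31}'' --- cannot close this, because the only handle that Proposition \ref{prop31} gives on the $\varphi$-dependence of $\|\mathcal{L}_{\varphi,\psi}(\sigma)\|_{\mathcal{B}(L^{1}_{\alpha})}$ is $\|\varphi\|_{\alpha,\infty}$, and smooth or compactly supported functions are not dense in $L^{\infty}_{\alpha}(\mathbb{R}^{d+1}_+)$. (For $\psi$ there is no such obstruction: translation and dilation are continuous in $L^1_\alpha$ and nice functions are $L^1_\alpha$-dense.) So as written your finite-rank approximation only works for windows $\varphi$ with extra regularity, not under the hypotheses of Theorem \ref{thm3.3}.

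For contrast, the paper avoids this entirely by arguing directly on sequences: it takes $(f_n)$ converging weakly to $0$ in $L^{1}_{\alpha}(\mathbb{R}^{d+1}_+)$, notes that $\langle f_n,\varphi_{a,x}\rangle_{\alpha,2}\to0$ pointwise in $(a,x)$ because each $\varphi_{a,x}$ lies in $L^{\infty}_{\alpha}$, dominates the integrand by $C\,|\sigma(a,x)|\,\|\varphi\|_{\alpha,\infty}\,|\tau^\alpha_x\psi_a(y)|$, and concludes $\|\mathcal{L}_{\varphi,\psi}(\sigma)(f_n)\|_{\alpha,1}\to0$ by dominated convergence; no continuity of $(a,x)\mapsto\varphi_{a,x}$ in any norm is required there. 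If you want to salvage a finite-rank or approximation strategy, you should replace the $L^\infty_\alpha$-oscillation estimate by something stable under rough $\varphi$ --- for instance, work with the kernel $\mathcal{R}(y,z)$ of Theorem \ref{thmR} and verify an $L^1$-compactness criterion in the $y$-variable (tightness together with $\|\tau^\alpha_h\mathcal{R}(\cdot,z)-\mathcal{R}(\cdot,z)\|_{\alpha,1}\to0$ uniformly in $z$), which only uses $L^1_\alpha$-continuity of translation applied to $\psi$.
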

\begin{proof}
  Let $(f_n)_{n\in\mathbb{N}}\in L^{1}_{\alpha}(\mathbb{R}^{d+1}_+)$ a sequence of functions that converge weakly to $0$ in $L^{1}_{\alpha}(\mathbb{R}^{d+1}_+)$ as $n$ converge to $\infty$. So, to show  the compactness of of localization operators, it is enough to prove that
  \begin{equation*}
    \lim_{n\to \infty} \|\mathcal{L}_{\varphi,\psi}(\sigma)(f_n)\|_{\alpha,1}=0.
  \end{equation*}
  We have
\begin{eqnarray}\label{3.9}
  \|\mathcal{L}_{\varphi,\psi}(\sigma)(f_n)\|_{\alpha,1} \leq \int_{\mathbb{R}_{+}^{d+1}}  \int_{\X}
 |\sigma(a,x)| |\langle f_n,\varphi_{a,x}
 \rangle_{\alpha,2}||\psi_{a,x}(y)| d\mu_{\alpha}(a,x) d\mu_{\alpha}(y).
  \end{eqnarray}
  Using the fact that $(f_n)_{n\in\mathbb{N}}$ converge weakly to $0$ in $L^{1}_{\alpha}(\mathbb{R}^{d+1}_+)$ as $n$ converge to $\infty$, we deduce
  \begin{equation}\label{3.10}
   \forall a>0, \forall x,y,\in \mathbb{R}_{+}^{d+1},\quad\lim_{n\to \infty} |\sigma(a,x)| |\langle f_n,\varphi_{a,x}
 \rangle_{\alpha,2}||\psi_{a,x}(y)|=0.
  \end{equation}
  Moreover, as $f_n\rightharpoonup 0$  weakly  in $L^{1}_{\alpha}(\mathbb{R}^{d+1}_+)$,  then there exists a positive
constant $C$ such that $\|f_n\|_{\alpha,1}\leq C$. Hence,
$$\forall a>0, \forall x,y,\in \mathbb{R}_{+}^{d+1},\quad|\sigma(a,x)| |\langle f_n,\varphi_{a,x}
 \rangle_{\alpha,2}||\psi_{a,x}(y)| $$
 \begin{equation}\label{3.11}
   \leq C |\sigma(a,x)| \|\varphi\|_{\alpha,\infty}|\tau_x^\alpha\psi(y)|.
  \end{equation}
  On the other hand, from Fubini's theorem and relation (\ref{ineqtransl}), we obtain
   \begin{eqnarray}\label{3.12}
     \int_{\mathbb{R}_{+}^{d+1}} && \!\!\!\!\!\!\!\!\!\! \!\!\!\!\!\int_{\X}
 |\sigma(a,x)| |\langle f_n,\varphi_{a,x}
 \rangle_{\alpha,2}||\psi_{a,x}(y)| d\mu_{\alpha}(a,x) d\mu_{\alpha}(y) \nonumber\\
     &\leq&  C \|\varphi\|_{\alpha,\infty}\int_{\X} |\sigma(a,x)|\int_{\mathbb{R}_{+}^{d+1}}|\tau_x^\alpha\psi_{a}(y)| d\mu_{\alpha}(y) d\mu_{\alpha}(a,x) \nonumber\\
     &\leq&  C \|\varphi\|_{\alpha,\infty}\int_{\X} |\sigma(a,x)|\int_{\mathbb{R}_{+}^{d+1}}|\psi_{a}(y)| d\mu_{\alpha}(y) d\mu_{\alpha}(a,x) \nonumber\\
     &\leq& C \|\varphi\|_{\alpha,\infty}   \|\psi\|_{\alpha,1}\|\sigma\|_{L^{1}_{\alpha}(\X)}<\infty.
  \end{eqnarray}
Thus, according to the relations (\ref{3.9})$-$(\ref{3.12}) and the Lebesgue dominated convergence theorem
we deduce that
\begin{equation*}
    \lim_{n\to \infty} \|\mathcal{L}_{\varphi,\psi}(\sigma)(f_n)\|_{\alpha,1}=0
  \end{equation*}
  which completes the proof.
\end{proof}

\begin{thm}
   Under the same assumptions of Theorem \ref{thm3.3}, the Weinstein two-wavelet localization operator
  $$\mathcal{L}_{\varphi,\psi}(\sigma):L^{p}_{\alpha}(\mathbb{R}^{d+1}_+)\longrightarrow L^{p}_{\alpha}(\mathbb{R}^{d+1}_+)$$
  is compact for all $p\in[1,\infty]$.
\end{thm}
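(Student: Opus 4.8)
The plan is to establish compactness at the two endpoints $p=1$ and $p=\infty$ first, and then to recover the whole open range $1<p<\infty$ by interpolating compactness. The endpoint $p=1$ is already settled: it is precisely the content of Proposition \ref{prop3.15}, which shows that $\mathcal{L}_{\varphi,\psi}(\sigma)$ carries weakly null sequences in $L^1_\alpha(\mathbb{R}^{d+1}_+)$ to norm-null sequences, hence is compact on $L^1_\alpha(\mathbb{R}^{d+1}_+)$.

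For the endpoint $p=\infty$ I would argue by duality. Since the hypotheses of Theorem \ref{thm3.3} are symmetric in the two windows and $\|\overline{\sigma}\|_{L^1_\alpha(\X)}=\|\sigma\|_{L^1_\alpha(\X)}$, Proposition \ref{prop3.15} applies verbatim to the operator $\mathcal{L}_{\psi,\varphi}(\overline{\sigma})$, which is therefore compact on $L^1_\alpha(\mathbb{R}^{d+1}_+)$. By the adjoint relation (\ref{lphp}), applied with the roles of $\varphi$ and $\psi$ exchanged and $\sigma$ replaced by $\overline{\sigma}$, one has $\mathcal{L}_{\varphi,\psi}(\sigma)=\big(\mathcal{L}_{\psi,\varphi}(\overline{\sigma})\big)^{*}$ as operators on $L^\infty_\alpha(\mathbb{R}^{d+1}_+)=\big(L^1_\alpha(\mathbb{R}^{d+1}_+)\big)^{*}$. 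Schauder's theorem, which asserts that the Banach-space adjoint of a compact operator is compact, then yields the compactness of $\mathcal{L}_{\varphi,\psi}(\sigma)$ on $L^\infty_\alpha(\mathbb{R}^{d+1}_+)$.

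It remains to treat $1<p<\infty$. Here I would invoke an interpolation theorem for compact operators of Krasnoselskii type: if a linear operator is compact on $L^{p_0}_\alpha$ and bounded on $L^{p_1}_\alpha$ (with $p_0,p_1$ finite), then it is compact on $L^p_\alpha$ for every $p$ strictly between $p_0$ and $p_1$. Fixing $p\in(1,\infty)$ and choosing any finite $p_1$ with $p<p_1<\infty$, the operator $\mathcal{L}_{\varphi,\psi}(\sigma)$ is compact on $L^1_\alpha(\mathbb{R}^{d+1}_+)$ by Proposition \ref{prop3.15} and bounded on $L^{p_1}_\alpha(\mathbb{R}^{d+1}_+)$ by Theorem \ref{thm6} (equivalently Theorem \ref{thm3.3}); taking $p_0=1$, the cited theorem delivers compactness on $L^p_\alpha(\mathbb{R}^{d+1}_+)$. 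Combining the three cases covers all $p\in[1,\infty]$.

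The main obstacle is this last step: one must appeal to a genuine compactness-interpolation theorem rather than to Riesz--Thorin, since ordinary interpolation preserves boundedness but not compactness. Care is needed to keep both interpolation exponents finite, which is why I pair the compact $L^1_\alpha$ endpoint with a finite $L^{p_1}_\alpha$ rather than with $L^\infty_\alpha$, and to verify that $\mathcal{L}_{\varphi,\psi}(\sigma)$ is indeed the \emph{same} operator on all the spaces involved, so that the hypotheses of the Krasnoselskii-type theorem genuinely apply.
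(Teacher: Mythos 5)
Your proposal is correct and follows essentially the same route as the paper: compactness on $L^{1}_{\alpha}(\mathbb{R}^{d+1}_+)$ from Proposition \ref{prop3.15}, compactness on $L^{\infty}_{\alpha}(\mathbb{R}^{d+1}_+)$ via the adjoint relation (\ref{lphp}) and Schauder's theorem, and the intermediate exponents by interpolation of compactness. The only cosmetic difference is that you pair the compact $L^{1}_{\alpha}$ endpoint with boundedness at a finite exponent through a Krasnoselskii-type theorem, whereas the paper interpolates directly between the two compact endpoints $L^{1}_{\alpha}$ and $L^{\infty}_{\alpha}$, citing \cite[Theorem 2.9]{bennett1988interpolation}.
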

\begin{proof}
  We only need to show that the result holds for $p=\infty$. in fact, the operator
  \begin{equation}\label{3.13}
   \mathcal{L}_{\varphi,\psi}(\sigma):L^{\infty}_{\alpha}(\mathbb{R}^{d+1}_+)\longrightarrow L^{\infty}_{\alpha}(\mathbb{R}^{d+1}_+),
  \end{equation}
    is the adjoint of the operator $$\mathcal{L}_{\psi,\varphi}(\overline{\sigma}):L^{1}_{\alpha}(\mathbb{R}^{d+1}_+)\longrightarrow L^{1}_{\alpha}(\mathbb{R}^{d+1}_+),$$
   which is compact by the previous Proposition. Therefore by the duality property, the operator given by (\ref{3.13}) is compact. Finally, by an interpolation of the compactness on $L^{1}_{\alpha}(\mathbb{R}^{d+1}_+)$ and on $L^{\infty}_{\alpha}(\mathbb{R}^{d+1}_+)$ like the one given on \cite[Theorem 2.9]{bennett1988interpolation}, the proof
is complete.
\end{proof}
In the following Theorem, we state a compactness result for the Weinstein two-wavelet localization operator  analogue of Theorem \ref{thm3.7}.
\begin{thm}
   Under the same assumptions of Theorem \ref{thm3.7}, the Weinstein two-wavelet localization operator
  $$\mathcal{L}_{\varphi,\psi}(\sigma):L^{p}_{\alpha}(\mathbb{R}^{d+1}_+)\longrightarrow L^{p}_{\alpha}(\mathbb{R}^{d+1}_+)$$
  is compact for all $p\in[r,r']$.
\end{thm}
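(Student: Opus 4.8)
The plan is to establish compactness at the two endpoints $p=r$ and $p=r'$ and then interpolate. First I would reduce the whole statement to these two endpoint cases: once $\mathcal{L}_{\varphi,\psi}(\sigma)$ is known to be compact on both $L^{r}_{\alpha}(\mathbb{R}^{d+1}_+)$ and $L^{r'}_{\alpha}(\mathbb{R}^{d+1}_+)$, the compactness on every intermediate $L^{p}_{\alpha}(\mathbb{R}^{d+1}_+)$, $p\in[r,r']$, would follow by the interpolation of compactness already invoked in the proof following Proposition \ref{prop3.15}, namely \cite[Theorem 2.9]{bennett1988interpolation}, applied now to the couple $(L^{r}_{\alpha}(\mathbb{R}^{d+1}_+),L^{r'}_{\alpha}(\mathbb{R}^{d+1}_+))$. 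This is exactly parallel to how Theorem \ref{thm3.7} obtained boundedness on $[r,r']$ by interpolating the endpoint estimates (\ref{Lbr}) and (\ref{Jr'}).

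For the endpoints I would argue by density. Since the measure $\mu_\alpha$ is $\sigma$-finite on $\X$, the subspace $L^{1}_{\alpha}(\X)\cap L^{r}_{\alpha}(\X)$ is dense in $L^{r}_{\alpha}(\X)$, so I would choose a sequence $(\sigma_n)_{n}\subset L^{1}_{\alpha}(\X)$ with $\sigma_n\to\sigma$ in $L^{r}_{\alpha}(\X)$, for instance by truncating $\sigma$ both in height and in support. Because $\varphi,\psi\in L^{1}_{\alpha}(\mathbb{R}^{d+1}_+)\cap L^{\infty}_{\alpha}(\mathbb{R}^{d+1}_+)$ and each $\sigma_n\in L^{1}_{\alpha}(\X)$, the data $(\sigma_n,\varphi,\psi)$ satisfy the hypotheses of Theorem \ref{thm3.3}. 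Hence, by the compactness theorem stated just after Proposition \ref{prop3.15}, each operator $\mathcal{L}_{\varphi,\psi}(\sigma_n)$ is compact on $L^{p}_{\alpha}(\mathbb{R}^{d+1}_+)$ for every $p\in[1,\infty]$, and in particular on $L^{r}_{\alpha}(\mathbb{R}^{d+1}_+)$ and on $L^{r'}_{\alpha}(\mathbb{R}^{d+1}_+)$.

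The key point is then the uniform (operator-norm) approximation. Using the linearity of the map $\sigma\mapsto\mathcal{L}_{\varphi,\psi}(\sigma)$ together with the endpoint estimates (\ref{Lbr}) and (\ref{Jr'}) of Theorem \ref{thm3.7}, I would bound
\begin{equation*}
\|\mathcal{L}_{\varphi,\psi}(\sigma)-\mathcal{L}_{\varphi,\psi}(\sigma_n)\|_{\mathcal{B}(L^{r}_{\alpha}(\mathbb{R}^{d+1}_+))}\leq K_1\,\|\sigma-\sigma_n\|_{L^{r}_{\alpha}(\X)},
\end{equation*}
and similarly, with $K_2$ in place of $K_1$, the corresponding difference in the $\mathcal{B}(L^{r'}_{\alpha}(\mathbb{R}^{d+1}_+))$ norm; both right-hand sides tend to $0$. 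Since the compact operators form a closed subspace of $\mathcal{B}(L^{r}_{\alpha}(\mathbb{R}^{d+1}_+))$ (resp.\ of $\mathcal{B}(L^{r'}_{\alpha}(\mathbb{R}^{d+1}_+))$) in the operator norm, the norm limit $\mathcal{L}_{\varphi,\psi}(\sigma)$ is compact on $L^{r}_{\alpha}(\mathbb{R}^{d+1}_+)$ and on $L^{r'}_{\alpha}(\mathbb{R}^{d+1}_+)$, which completes the endpoint step.

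I expect the main difficulty to be one of bookkeeping rather than of ideas: one must ensure that the approximating symbols genuinely lie in $L^{1}_{\alpha}(\X)$, so that the earlier compactness theorem applies, while still converging in the $L^{r}_{\alpha}(\X)$ norm so that (\ref{Lbr}) and (\ref{Jr'}) force operator-norm convergence. The only genuinely delicate case is $r=1$, $r'=\infty$, where the interpolation of compactness between $L^{1}_{\alpha}(\mathbb{R}^{d+1}_+)$ and $L^{\infty}_{\alpha}(\mathbb{R}^{d+1}_+)$ is precisely the situation already settled after Proposition \ref{prop3.15}; in that limiting case one may alternatively pass from $L^{1}_{\alpha}$ to $L^{\infty}_{\alpha}$ by the duality $\mathcal{L}_{\varphi,\psi}^{*}(\sigma)=\mathcal{L}_{\psi,\varphi}(\overline{\sigma})$ of (\ref{lphp}) together with Schauder's theorem. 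Apart from checking that \cite[Theorem 2.9]{bennett1988interpolation} applies to the couple $(L^{r}_{\alpha},L^{r'}_{\alpha})$, everything reduces to the endpoint compactness above and the continuity of $\sigma\mapsto\mathcal{L}_{\varphi,\psi}(\sigma)$ already established in Theorem \ref{thm3.7}.
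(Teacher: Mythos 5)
Your proof is correct, but it reaches the conclusion by a genuinely different mechanism from the paper at the crucial step. The paper's own proof is a one-liner: it ``interpolates'' the compactness of Proposition \ref{prop3.15} (symbol in $L^{1}_{\alpha}(\X)$, operator acting on $L^{1}_{\alpha}$) against an $L^{2}$-compactness result from \cite[Corollary 4.2]{mejjaoli2017new} (symbol in $L^{2}_{\alpha}(\X)$, operator on $L^{2}_{\alpha}$), i.e.\ it interpolates compactness simultaneously in the symbol variable and in the function variable; since a symbol $\sigma\in L^{r}_{\alpha}(\X)$ with $1<r<2$ need not lie in $L^{1}_{\alpha}(\X)$ or $L^{2}_{\alpha}(\X)$, this bilinear compactness interpolation is the whole content of the proof and is left entirely implicit. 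You instead fix $\sigma\in L^{r}_{\alpha}(\X)$, approximate it in $L^{r}_{\alpha}(\X)$ by truncations $\sigma_n\in L^{1}_{\alpha}(\X)$ (possible since $r\le 2<\infty$), note that each $\mathcal{L}_{\varphi,\psi}(\sigma_n)$ is compact on every $L^{p}_{\alpha}(\mathbb{R}^{d+1}_+)$ by the theorem following Proposition \ref{prop3.15} (the hypotheses $\varphi,\psi\in L^{1}_{\alpha}\cap L^{\infty}_{\alpha}$ are contained in those of Theorem \ref{thm3.7}), and then use linearity of $\sigma\mapsto\mathcal{L}_{\varphi,\psi}(\sigma)$ with the endpoint bounds (\ref{Lbr}) and (\ref{Jr'}) to get operator-norm convergence on $L^{r}_{\alpha}$ and $L^{r'}_{\alpha}$, whence compactness at $p=r$ and $p=r'$ by norm-closedness of the compact operators; intermediate $p$ then follow from the same one-parameter interpolation of compactness the paper already invokes after Proposition \ref{prop3.15}. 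What your route buys is that the only compactness interpolation needed is in the function variable for a single fixed operator, which is exactly what \cite[Theorem 2.9]{bennett1988interpolation} (or a one-sided Cwikel-type result) covers; the delicate and unjustified step of interpolating compactness in the symbol variable is replaced by the elementary fact that a norm limit of compact operators is compact. Your argument is therefore not only valid but arguably closes a gap in the paper's own proof.
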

\begin{proof}
By the same manner in the proof of the pervious Theorem, the result is an immediate consequence of an interpolation of
\cite[Corollary 4.2]{mejjaoli2017new} and Proposition \ref{prop3.15} like the one given on \cite[Theorem 2.9]{bennett1988interpolation}.
\end{proof}
Using similar ideas as above we can prove the following reult.
\begin{thm}
   Under the same assumptions of Theorem \ref{thm6}, the Weinstein two-wavelet localization operator
  $$\mathcal{L}_{\varphi,\psi}(\sigma):L^{p}_{\alpha}(\mathbb{R}^{d+1}_+)\longrightarrow L^{p}_{\alpha}(\mathbb{R}^{d+1}_+)$$
  is compact for all $p\in[1,\infty]$.
\end{thm}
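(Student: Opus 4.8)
The plan is to mirror the argument already used for the compactness results attached to Theorems \ref{thm3.3} and \ref{thm3.7}: establish compactness directly at one endpoint, transport it to the conjugate endpoint by duality, and then interpolate. I would first treat the endpoint $p=1$, where Theorem \ref{thm6} forces $q=\infty$, so that its hypotheses reduce exactly to those of Proposition \ref{prop3.15}, namely $\varphi\in L^\infty_\alpha(\mathbb{R}^{d+1}_+)$ and $\psi\in L^1_\alpha(\mathbb{R}^{d+1}_+)$. Hence $\mathcal{L}_{\varphi,\psi}(\sigma)$ is compact on $L^1_\alpha(\mathbb{R}^{d+1}_+)$ by that proposition, whose proof rests on the weak-convergence criterion: for a sequence $f_n\rightharpoonup 0$ one checks that the integrand $|\sigma(a,x)|\,|\langle f_n,\varphi_{a,x}\rangle_{\alpha,2}|\,|\psi_{a,x}(y)|$ tends to $0$ pointwise and is dominated by the fixed integrable majorant $C\,|\sigma(a,x)|\,\|\varphi\|_{\alpha,\infty}\,|\tau_x^\alpha\psi_a(y)|$, so that Lebesgue's dominated convergence theorem applies.

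For the conjugate endpoint $p=\infty$ (hence $q=1$), I would invoke the adjoint identity (\ref{lphp}), which gives $\mathcal{L}^{*}_{\varphi,\psi}(\sigma)=\mathcal{L}_{\psi,\varphi}(\overline{\sigma})$ acting on $L^1_\alpha(\mathbb{R}^{d+1}_+)$. Because the roles of the two windows are exchanged in this regime, the operator $\mathcal{L}_{\psi,\varphi}(\overline{\sigma})$ carries first window $\psi\in L^\infty_\alpha(\mathbb{R}^{d+1}_+)$ and second window $\varphi\in L^1_\alpha(\mathbb{R}^{d+1}_+)$, which are once more precisely the hypotheses of Proposition \ref{prop3.15}; thus $\mathcal{L}_{\psi,\varphi}(\overline{\sigma})$ is compact on $L^1_\alpha(\mathbb{R}^{d+1}_+)$. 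By duality (Schauder's theorem, a bounded operator being compact if and only if its adjoint is), $\mathcal{L}_{\varphi,\psi}(\sigma)$ is then compact on $L^\infty_\alpha(\mathbb{R}^{d+1}_+)$.

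With compactness secured at $p=1$ and at $p=\infty$, I would close the argument by interpolating the compactness between these two endpoints, exactly as in the preceding theorem, using the interpolation of compact operators on the scale $L^p_\alpha(\mathbb{R}^{d+1}_+)$ of the type recorded in \cite[Theorem 2.9]{bennett1988interpolation}. This yields compactness of $\mathcal{L}_{\varphi,\psi}(\sigma)$ on $L^p_\alpha(\mathbb{R}^{d+1}_+)$ for every $p\in[1,\infty]$.

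The main obstacle I anticipate is the bookkeeping at the two endpoints: one must verify that the integrability hypotheses of Theorem \ref{thm6}, read at $p=1$ and at $p=\infty$, genuinely coincide with the hypotheses of Proposition \ref{prop3.15} after the window exchange induced by passing to the adjoint, so that the endpoint compactness is legitimately available before interpolation. Once both endpoints are in hand, the interpolation step is routine and the self-duality furnished by (\ref{lphp}) makes the conjugate endpoint essentially free.
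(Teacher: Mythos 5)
Your endpoint analysis is fine: at $p=1$ the hypotheses of Theorem \ref{thm6} give $\varphi\in L^{\infty}_{\alpha}(\mathbb{R}^{d+1}_+)$ and $\psi\in L^{1}_{\alpha}(\mathbb{R}^{d+1}_+)$, and the weak-convergence/dominated-convergence argument of Proposition \ref{prop3.15} applies (note, though, that Proposition \ref{prop3.15} is \emph{stated} for both windows in $L^{1}_{\alpha}\cap L^{\infty}_{\alpha}$, so you are reusing its proof rather than its statement); at $p=\infty$ the adjoint identity (\ref{lphp}) and Schauder's theorem transfer this compactness as you say. The genuine gap is the final interpolation step. The hypotheses of Theorem \ref{thm6} are themselves indexed by $p$: for a fixed $p\in(1,\infty)$ you are given only $\psi\in L^{p}_{\alpha}$ and $\varphi\in L^{q}_{\alpha}$, and such a pair need not belong to $L^{1}_{\alpha}\cap L^{\infty}_{\alpha}$. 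For that fixed pair of windows the operator $\mathcal{L}_{\varphi,\psi}(\sigma)$ is therefore not known to be bounded, let alone compact, on $L^{1}_{\alpha}$ or on $L^{\infty}_{\alpha}$, so there are no endpoint operators to interpolate between; the interpolation-of-compactness device of \cite[Theorem 2.9]{bennett1988interpolation} is only available under the stronger hypotheses of Theorem \ref{thm3.3}, which is the setting of the preceding theorem, not of this one.

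For $1<p<\infty$ the missing ingredient is approximation rather than interpolation. Since $p$ and $q$ are both finite, choose $\varphi_n,\psi_n\in L^{1}_{\alpha}\cap L^{\infty}_{\alpha}(\mathbb{R}^{d+1}_+)$ with $\varphi_n\to\varphi$ in $L^{q}_{\alpha}$ and $\psi_n\to\psi$ in $L^{p}_{\alpha}$. Each $\mathcal{L}_{\varphi_n,\psi_n}(\sigma)$ is compact on $L^{p}_{\alpha}$ by the compactness theorem already established under the assumptions of Theorem \ref{thm3.3}, and the bilinearity of $(\varphi,\psi)\mapsto\mathcal{L}_{\varphi,\psi}(\sigma)$ together with the norm bound of Theorem \ref{thm6} yields
$$\|\mathcal{L}_{\varphi,\psi}(\sigma)-\mathcal{L}_{\varphi_n,\psi_n}(\sigma)\|_{\mathcal{B}(L^{p}_{\alpha}(\mathbb{R}^{d+1}_+))}\leq\left(\|\varphi-\varphi_n\|_{\alpha,q}\,\|\psi\|_{\alpha,p}+\|\varphi_n\|_{\alpha,q}\,\|\psi-\psi_n\|_{\alpha,p}\right)\|\sigma\|_{L^{1}_{\alpha}(\X)}\longrightarrow 0,$$
so $\mathcal{L}_{\varphi,\psi}(\sigma)$ is an operator-norm limit of compact operators and hence compact. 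With this replacement for the intermediate exponents, combined with your two endpoint cases, the argument closes.
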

\subsection{Examples}
In this subsection, as in the paper of Wong \cite{wong2003localization}, we study some typical examples of the Weinstein two-wavelet localization operators.
We show that on the space $$\X=\left\{(a,x): x\in \mathbb{R}^{d+1}_+ \;\text{and}\; a>0\right\}$$ the  localization operators associated to admissible Weinstein wavelets $\varphi$ and $\psi$ and separable symbols $\sigma$ are paracommutators and we show that if the symbol is a function of $x$ only, then the localization operator can be expressed in terms of a paraproduct. We show in the end  if the symbol is a function of $a$ only, then the localization operator $\mathcal{L}_{\varphi,\psi}(\sigma)$  is a Weinstein multiplier.
\subsubsection{Paracomutators} Let $\sigma$ be a separable function on $\X$ given by
\begin{equation*}
  \forall (a,x)\in\X, \quad \sigma(a,x)=\chi(a)\zeta(x),
\end{equation*}
where $\chi$ and $\zeta$ are suitable functions,respectively, on $(0,\infty)$ and $\mathbb{R}^{d+1}_+$. Then, according to
Parseval's formula (\ref{MM}) and Fubini's theorem, we have for all $f,g\in L^2_\alpha(\mathbb{R}^{d+1}_+)$
\begin{eqnarray*}
  \langle \mathcal{L}_{\varphi, \psi}(\sigma)(f),\ g \rangle_{\alpha,2}&=& \int_{\X}
 \sigma(a,x)\Phi^W_\varphi(f)(a,x)\overline{\Phi^W_\psi(g)(a,x)}d\mu_\alpha(a,x)\\
   &=& \int_{0}^{\infty}\chi(a)\int_{\mathbb{R}^{d+1}_+}\int_{\mathbb{R}^{d+1}_+}\tau_\eta^\alpha\mathcal{F}_{W}(\zeta)(-\xi) \mathcal{F}_{W}(f)(\xi)\mathcal{F}_{W}(\overline{\check{\varphi}})(a\xi)  \\
  &&  \times \overline{\mathcal{F}_{W}(g)(\eta)\mathcal{F}_{W}(\overline{\check{\psi}})(a\xi)}d\mu_\alpha(\eta)d\mu_\alpha(\xi)\frac{da}{a} \\
   &=&  \int_{\mathbb{R}^{d+1}_+}\int_{\mathbb{R}^{d+1}_+}K(\xi,\eta)\tau_\eta^\alpha\mathcal{F}_{W}(\zeta)(-\xi) \mathcal{F}_{W}(f)(\xi)
   \overline{\mathcal{F}_{W}(g)(\eta)}\\
   && d\mu_\alpha(\eta)d\mu_\alpha(\xi),
\end{eqnarray*}
where
\begin{equation*}
  K(\xi,\eta)=\int_{0}^{\infty}\chi(a)\overline{\mathcal{F}_{W}(\varphi)(a\xi)}\mathcal{F}_{W}(\psi)(a\eta)\frac{da}{a},\quad \forall \xi,\eta\in \mathbb{R}^{d+1}_+.
\end{equation*}
Thus, the localization operator $\mathcal{L}_{\varphi,\psi}(\sigma)$ is a paracommutator with Weinstein kernel
$K$ and symbol $\zeta$.
\subsubsection{Paraproduct} In this example, we specialize to the case when the symbol $\sigma$ is
a function of $x$ only, i.e.
\begin{equation*}
  \forall (a,x)\in\X, \quad \sigma(a,x)=\zeta(x),
\end{equation*}
where $\zeta$ is a suitable function on $\mathbb{R}^{d+1}_+$.  Indeed, using Parseval's formula (\ref{MM}) and Fubini's theorem as in the preceding example, we get for all  $f,g\in L^2_\alpha(\mathbb{R}^{d+1}_+)$
\begin{eqnarray}\label{ex3.12}
 \langle \mathcal{L}_{\varphi, \psi}(\sigma)(f),\ g \rangle_{\alpha,2}&=& \int_{\X}
 \sigma(a,x)\Phi^W_\varphi(f)(a,x)\overline{\Phi^W_\psi(g)(a,x)}d\mu_\alpha(a,x) \nonumber \\
   &=&\int_{\mathbb{R}^{d+1}_+}\left[\int_{0}^{\infty}(\check{\zeta}(\Theta_a*f)*\psi_a)(x)\frac{da}{a}\right]\overline{g(x)}
   d\mu_\alpha(x),
\end{eqnarray}
where $\Theta(x)=\overline{\varphi(-x)}$. Therefore, we deduce that
\begin{equation*}
  \mathcal{L}_{\varphi, \psi}(\sigma)(f)(x)= \int_{0}^{\infty}(\check{\zeta}(\Theta_a*f)*\psi_a)(x)\frac{da}{a},\quad\forall x\in \mathbb{R}^{d+1}_+.
\end{equation*}
 This formula for the Weinstein two-wavelet localization operator  is an interesting formula in its own right. Further analysis of (\ref{ex3.12}) using Fubini's theorem gives
\begin{equation}\label{Lpfipsi}
   \langle \mathcal{L}_{\varphi, \psi}(\sigma)(f),\ g \rangle_{\alpha,2}=\int_{\mathbb{R}^{d+1}_+}\check{\zeta}(x)p_{\varphi,\psi} (f,g)(x)d\mu_\alpha(x),
\end{equation}
where
\begin{equation}
  p_{\varphi,\psi} (f,g)(x)=\int_{0}^{\infty}(\Theta_a*f)\overline{(\Upsilon_a*g)(x)}\frac{da}{a},\quad\forall x\in\mathbb{R}^{d+1}_+,
\end{equation}
with $\Upsilon(x)=\overline{\psi(-x)}$.

Several versions of paraproducts exist in the literature. It should be remarked that the notion of a paraproduct is rooted in Bony's work \cite{bony1981calcul} on linearization of nonlinear problems.
The paraproduct connection (\ref{ex3.12}) and the fact that the Weinstein two-wavelet localization operators  associated to symbols $\sigma$ in $L^\infty_\alpha(\X)$ are bounded linear operators (see \cite[Corollary 3.2]{mejjaoli2017new}) such that
\begin{equation*}
  \|\mathcal{L}_{\varphi, \psi}(\sigma)\|_{S_\infty}\leq \sqrt{C_\varphi C_\psi}  \|\sigma\|_{L^\infty_\alpha(\X)}.
\end{equation*}
allow us to give an $L^1_\alpha$-estimate on the paraproduct $p_{\varphi,\psi} (f,g)$, where $\varphi$ and $\psi$ are the
Weinstein wavelets, and $f$ and $g$ are functions in $ L^2_\alpha(\mathbb{R}^{d+1}_+)$.  First, we need the following Lemma.
\begin{lem}
  Let  $\varphi$ and $\psi$ two Weinstein wavelets such that $(\varphi,\psi)$ be a Weinstein two-wavelet. Then we have for all $f$ and $g$  in  $L^2_\alpha(\mathbb{R}^{d+1}_+)$
\begin{equation*}
 \int_{\mathbb{R}^{d+1}_+}p_{\varphi,\psi} (f,g)(x)d\mu_\alpha(x)=  C_{\varphi,\psi}\langle f,g \rangle_{\alpha,2},
\end{equation*}
\end{lem}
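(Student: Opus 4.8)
The plan is to reduce the identity to the two-wavelet Parseval formula of Theorem \ref{Parseval's formula2wave}. First I would insert the definition of $p_{\varphi,\psi}$ and use Fubini's theorem to integrate in $x$ before integrating in $a$, obtaining
$$\int_{\mathbb{R}^{d+1}_+}p_{\varphi,\psi}(f,g)(x)\,d\mu_\alpha(x)=\int_0^\infty\left[\int_{\mathbb{R}^{d+1}_+}(\Theta_a*f)(x)\,\overline{(\Upsilon_a*g)(x)}\,d\mu_\alpha(x)\right]\frac{da}{a}.$$
The goal is then to recognise the inner $a$-slice as a slice of the Weinstein two-wavelet Parseval integrand, so that the outer integral against $\frac{da}{a^{2\alpha+d+3}}$ is rebuilt and Theorem \ref{Parseval's formula2wave} applies directly.

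To carry out the identification I would pass to the Weinstein transform. Using $\Theta(x)=\overline{\varphi(-x)}$, $\Upsilon(x)=\overline{\psi(-x)}$ together with the convolution rule (\ref{F(f*g)}), the dilation rule (\ref{fourierfia}), and the conjugation/reflection rules (\ref{fourierbar})--(\ref{fourierbar1}), a short computation gives $\mathcal{F}_{W}(\Theta_a*f)(\xi)=\overline{\mathcal{F}_{W}(\varphi)(a\xi)}\,\mathcal{F}_{W}(f)(\xi)$ and the analogue for $\Upsilon_a*g$. Comparing with the transform of $\check f*\overline{\varphi_a}$ shows that $\check f*\overline{\varphi_a}(x)=(\Theta_a*f)(-x)$, whence by (\ref{recontwave})
$$\Phi^W_\varphi(f)(a,x)=a^{\alpha+1+\frac{d}{2}}(\Theta_a*f)(-x),\qquad \Phi^W_\psi(g)(a,x)=a^{\alpha+1+\frac{d}{2}}(\Upsilon_a*g)(-x).$$
After the change of variables $x\mapsto -x$, under which $d\mu_\alpha$ is invariant since $-x=(-x',x_{d+1})$, this yields $\int_{\mathbb{R}^{d+1}_+}(\Theta_a*f)(x)\overline{(\Upsilon_a*g)(x)}d\mu_\alpha(x)=a^{-(2\alpha+d+2)}\int_{\mathbb{R}^{d+1}_+}\Phi^W_\varphi(f)(a,x)\overline{\Phi^W_\psi(g)(a,x)}d\mu_\alpha(x)$. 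Substituting this back and absorbing $a^{-(2\alpha+d+2)}$ into $\frac{da}{a}$ to recover $\frac{da}{a^{2\alpha+d+3}}$, the right-hand side becomes $\int_{\X}\Phi^W_\varphi(f)(a,x)\overline{\Phi^W_\psi(g)(a,x)}d\mu_\alpha(a,x)$, which equals $C_{\varphi,\psi}\langle f,g\rangle_{\alpha,2}$ by Theorem \ref{Parseval's formula2wave}, finishing the proof.

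The step I expect to be the main obstacle is the rigorous use of Fubini's theorem in the first display, since the double integral need not be absolutely convergent a priori, exactly the subtlety already flagged after the inversion formula (\ref{inversion2wave}). To control it I would exploit that $\varphi$ and $\psi$ are Weinstein wavelets, so that $\int_0^\infty|\mathcal{F}_{W}(\varphi)(a\xi)|^2\frac{da}{a}=C_\varphi$ and $\int_0^\infty|\mathcal{F}_{W}(\psi)(a\xi)|^2\frac{da}{a}=C_\psi$ are finite; the Cauchy--Schwarz inequality in $a$ then bounds $\int_0^\infty|\mathcal{F}_{W}(\varphi)(a\xi)|\,|\mathcal{F}_{W}(\psi)(a\xi)|\frac{da}{a}\leq\sqrt{C_\varphi C_\psi}$ uniformly in $\xi$, and combining this with Plancherel's formula (\ref{Plancherel formula}) and Cauchy--Schwarz in $\xi$ dominates the whole expression by $\sqrt{C_\varphi C_\psi}\,\|f\|_{\alpha,2}\|g\|_{\alpha,2}<\infty$, which legitimises every interchange of integrals used above.
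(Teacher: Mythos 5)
Your argument is correct, but it is organized differently from the paper's. The paper proves the lemma by a direct computation: after Fubini it applies the scalar Parseval formula (\ref{MM}) to the inner $x$-integral, uses the convolution rule (\ref{F(f*g)}) to write $\mathcal{F}_{W}(\Theta_a*f)=\overline{\mathcal{F}_{W}(\varphi)(a\cdot)}\,\mathcal{F}_{W}(f)$ (and similarly for $\Upsilon_a*g$), and then recognizes the resulting $a$-integral as the two-wavelet constant from the definition (\ref{deftwowave}), leaving $\int\mathcal{F}_{W}(f)\overline{\mathcal{F}_{W}(g)}\,d\mu_\alpha=\langle f,g\rangle_{\alpha,2}$. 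You instead identify $(\Theta_a*f)(x)$ and $(\Upsilon_a*g)(x)$ with $a^{-(\alpha+1+\frac{d}{2})}\Phi^W_\varphi(f)(a,-x)$ and $a^{-(\alpha+1+\frac{d}{2})}\Phi^W_\psi(g)(a,-x)$ via (\ref{recontwave}), reassemble the measure $d\mu_\alpha(a,x)$, and invoke Theorem \ref{Parseval's formula2wave} as a black box; this identification is correct (the sign and the power of $a$ both check out, and $d\mu_\alpha$ is indeed invariant under $x\mapsto -x$). The two routes are computationally equivalent -- the cited Parseval theorem is itself proved by essentially the paper's calculation -- so what your version buys is mainly economy of reference, at the cost of the extra bookkeeping needed to match $p_{\varphi,\psi}$ to the Parseval integrand. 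A genuine added value of your write-up is the justification of Fubini: the Cauchy--Schwarz bound $\int_0^\infty|\mathcal{F}_{W}(\varphi)(a\xi)||\mathcal{F}_{W}(\psi)(a\xi)|\frac{da}{a}\leq\sqrt{C_\varphi C_\psi}$, combined with Plancherel and Cauchy--Schwarz in $\xi$, dominates the double integral by $\sqrt{C_\varphi C_\psi}\,\|f\|_{\alpha,2}\|g\|_{\alpha,2}$; the paper invokes Fubini without comment, and this is exactly the hypothesis (that $\varphi$ and $\psi$ are individually wavelets, not merely a two-wavelet pair) that makes the interchange legitimate.
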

\begin{proof}
According to relation (\ref{F(f*g)}), Parseval's formula (\ref{MM}) and Fubini's theorem, we obtain
\begin{eqnarray*}
   \int_{\mathbb{R}^{d+1}_+} &&\!\!\!\!\!\!\!\!\!\!\!\!\! p_{\varphi,\psi} (f,g)(x)\mu_\alpha(x)= \int_{\mathbb{R}^{d+1}_+}\int_{0}^{\infty}(\Theta_a*f)(x)\overline{(\Upsilon_a*g)(x)}\frac{da}{a}d\mu_\alpha(x) \\
   &=& \int_{0}^{\infty}\left[\int_{\mathbb{R}^{d+1}_+} \mathcal{F}_{W}(f)(\xi)\mathcal{F}_{W}(\Theta)(a\xi)
   \overline{ \mathcal{F}_{W}(g)(\xi)\mathcal{F}_{W}(\Upsilon)(a\xi)}d\mu_\alpha(x)\right]\frac{da}{a}\\
   &=& \left[\int_{0}^{\infty}\mathcal{F}_{W}(\Theta)(a\xi)\overline{\mathcal{F}_{W}(\Upsilon)(a\xi)}\frac{da}{a}\right] \mathcal{F}_{W}(f)(\xi) \overline{\mathcal{F}_{W}(g)(\xi)}d\mu_\alpha(x)\\
   &=&  C_{\varphi,\psi}\langle f,g \rangle_{\alpha,2}.
\end{eqnarray*}
\end{proof}
An $L^1_\alpha$-estimate for the paraproduct $p_{\varphi,\psi} (f,g)$ in terms of the $L^2_\alpha$-norms of $f$ and $g$ is given in the following theorem.
\begin{thm}
  Let  $\varphi$ and $\psi$ two Weinstein wavelets such that
  \begin{equation*}
    \|\varphi\|_{\alpha,2}=\|\psi\|_{\alpha,2}=1.
  \end{equation*}
   Then we have for all $f$ and $g$  in $L^2_\alpha(\mathbb{R}^{d+1}_+)$
   \begin{equation*}
      \|p_{\varphi,\psi} (f,g)\|_{\alpha,1}\leq \sqrt{C_\varphi C_\psi}  \|f\|_{\alpha,2} \|g\|_{\alpha,2}.
   \end{equation*}
\end{thm}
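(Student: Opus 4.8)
The plan is to deduce the estimate from the paraproduct representation (\ref{Lpfipsi}), which realizes every pairing $\int \check\zeta\,p_{\varphi,\psi}(f,g)\,d\mu_\alpha$ as a matrix element of a localization operator with an $x$-only symbol, combined with the $L^1_\alpha$–$L^\infty_\alpha$ duality. Writing $F=p_{\varphi,\psi}(f,g)$, I would recover the $L^1_\alpha$-norm of $F$ by pairing it against the unit ball of $L^\infty_\alpha(\mathbb{R}^{d+1}_+)$, namely
$$\|F\|_{\alpha,1}=\sup\left\{\left|\int_{\mathbb{R}^{d+1}_+}h(x)F(x)\,d\mu_\alpha(x)\right|:\ \|h\|_{\alpha,\infty}\le1\right\},$$
and then bound each such pairing uniformly in $h$.

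First I would fix $h\in L^\infty_\alpha(\mathbb{R}^{d+1}_+)$ with $\|h\|_{\alpha,\infty}\le1$ and choose a symbol depending on $x$ only, $\sigma(a,x)=\zeta(x)$ with $\zeta=\check h$, so that $\check\zeta=h$ (reflection being an involution). Since $d\mu_\alpha$ is invariant under $x\mapsto-x=(-x',x_{d+1})$ — its weight depends only on $x_{d+1}$ — reflection is an $L^\infty_\alpha$-isometry, whence $\|\sigma\|_{L^\infty_\alpha(\X)}=\|\zeta\|_{\alpha,\infty}=\|h\|_{\alpha,\infty}\le1$. With this choice the identity (\ref{Lpfipsi}) reads
$$\int_{\mathbb{R}^{d+1}_+}h(x)\,p_{\varphi,\psi}(f,g)(x)\,d\mu_\alpha(x)=\langle\mathcal{L}_{\varphi,\psi}(\sigma)(f),\,g\rangle_{\alpha,2}.$$

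Next I would estimate the right-hand side by Cauchy–Schwarz together with the operator-norm bound $\|\mathcal{L}_{\varphi,\psi}(\sigma)\|_{S_\infty}\le\sqrt{C_\varphi C_\psi}\,\|\sigma\|_{L^\infty_\alpha(\X)}$ recalled before the Lemma (here the normalization $\|\varphi\|_{\alpha,2}=\|\psi\|_{\alpha,2}=1$ is precisely what makes the constant equal to $\sqrt{C_\varphi C_\psi}$), obtaining
$$\left|\langle\mathcal{L}_{\varphi,\psi}(\sigma)(f),\,g\rangle_{\alpha,2}\right|\le\|\mathcal{L}_{\varphi,\psi}(\sigma)\|_{S_\infty}\,\|f\|_{\alpha,2}\,\|g\|_{\alpha,2}\le\sqrt{C_\varphi C_\psi}\,\|f\|_{\alpha,2}\,\|g\|_{\alpha,2},$$
a bound independent of the admissible $h$. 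Taking the supremum over $\|h\|_{\alpha,\infty}\le1$ then yields the claimed inequality.

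The step needing the most care — and the main obstacle — is justifying the duality formula for the $L^1_\alpha$-norm without knowing integrability of $p_{\varphi,\psi}(f,g)$ in advance. I would handle this by the standard exhaustion argument: choose $E_n\uparrow\mathbb{R}^{d+1}_+$ with $\mu_\alpha(E_n)<\infty$, set $h_n=\overline{\operatorname{sgn}F}\,\mathbf{1}_{E_n}$ so that $\|h_n\|_{\alpha,\infty}\le1$ and $\int F\,h_n\,d\mu_\alpha=\int_{E_n}|F|\,d\mu_\alpha$, apply the uniform bound above to each $h_n$, and let $n\to\infty$ by monotone convergence; the finiteness supplied by the preceding Lemma guarantees the pairings are well defined. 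The only other point to check alongside is that $\check\zeta=h$ is genuinely attainable, i.e. that reflection is a bijective $L^\infty_\alpha$-isometry, which again rests on the reflection-invariance of $d\mu_\alpha$.
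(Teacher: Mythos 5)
Your proposal follows essentially the same route as the paper: both combine the bound $\|\mathcal{L}_{\varphi,\psi}(\sigma)\|_{S_\infty}\leq\sqrt{C_\varphi C_\psi}\,\|\sigma\|_{L^\infty_\alpha(\X)}$ from \cite[Corollary 3.2]{mejjaoli2017new} with the paraproduct identity (\ref{Lpfipsi}) and Cauchy--Schwarz, then recover $\|p_{\varphi,\psi}(f,g)\|_{\alpha,1}$ by $L^1_\alpha$--$L^\infty_\alpha$ duality. You merely make explicit the details the paper glosses over (the reflection $\check\zeta=h$ being harmless since $d\mu_\alpha$ is invariant under $x\mapsto(-x',x_{d+1})$, and the exhaustion/monotone-convergence justification of the duality formula), which is a welcome tightening rather than a different argument.
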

\begin{proof}
  From \cite[Corollary 3.2]{mejjaoli2017new}, we know that
\begin{equation*}
  \|\mathcal{L}_{\varphi, \psi}(\sigma)\|_{S_\infty}\leq \sqrt{C_\varphi C_\psi}  \|\sigma\|_{L^\infty_\alpha(\X)}.
\end{equation*}
\end{proof}
Therefore, by relation (\ref{Lpfipsi}) and Cauchy-Schwarz inequality, we get
\begin{equation*}
  \frac{1}{\sqrt{C_\varphi C_\psi}}\left|\int_{\mathbb{R}^{d+1}_+}\check{\zeta}(x)p_{\varphi,\psi} (f,g)(x)d\mu_\alpha(x)\right| \leq \|\zeta\|_{\alpha,\infty} \|f\|_{\alpha,2} \|g\|_{\alpha,2}.
\end{equation*}
Since $\frac{1}{\sqrt{C_\varphi C_\psi}}p_{\varphi,\psi} (f,g)$ belongs to $L^1_\alpha(\mathbb{R}^{d+1}_+)$, it follows from the Hahn–Banach theorem that is in the dual $\left(L^\infty_\alpha(\mathbb{R}^{d+1}_+)\right)^*$ of $L^\infty_\alpha(\mathbb{R}^{d+1}_+)$ and we have

\begin{equation*}
  \frac{1}{\sqrt{C_\varphi C_\psi}}\|p_{\varphi,\psi}(f,g)\|_{\alpha,1}\leq  \|f\|_{\alpha,2} \|g\|_{\alpha,2}.
\end{equation*}
\subsubsection{Weinstein Multipliers}
We see in this section that if the symbol $\sigma$ is a function of $a$ only, then the Weinstein two-wavelet localization operator
 is a Weinstein multiplier $\mathcal{T}^W_m$ with symbol $m$ defined on $L^2_\alpha(\mathbb{R}^{d+1}_+)$ as follow
\begin{equation*}
  \mathcal{T}^W_m= \mathcal{F}_{W}^{-1}(m \mathcal{F}_{W}(f))
\end{equation*}
\begin{prop}
  Let $\sigma$ be a function on $\X$ given by
  \begin{equation*}
    \sigma(a,x)=\chi(a), \quad\forall (a,x)\in \X,
  \end{equation*}
  where $\chi$ is a suitable function on $(0,\infty).$ Then, we have
  \begin{equation*}
    \mathcal{L}_{\varphi, \psi}(\sigma)=\mathcal{T}^W_m,
  \end{equation*}
where $\mathcal{T}^W_m$ is the Weinstein multiplier with symbol $m$ given by
 \begin{equation*}
   m(\xi)=\int_{0}^{\infty}\chi(a)\overline{\mathcal{F}_{W}(\varphi)(a\xi)}\mathcal{F}_{W}(\psi)(a\xi)\frac{da}{a},\quad \forall \xi\in \mathbb{R}^{d+1}_+.
 \end{equation*}
 \end{prop}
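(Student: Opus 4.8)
The plan is to verify the operator identity weakly: fix $f,g\in L^2_\alpha(\mathbb{R}^{d+1}_+)$, compute $\langle\mathcal{L}_{\varphi,\psi}(\sigma)(f),g\rangle_{\alpha,2}$ from the weak definition (\ref{weaktwo-waveloc}), show it equals $\langle\mathcal{T}^W_m(f),g\rangle_{\alpha,2}$, and conclude $\mathcal{L}_{\varphi,\psi}(\sigma)(f)=\mathcal{T}^W_m(f)$ since $g$ is arbitrary. Inserting $\sigma(a,x)=\chi(a)$ into (\ref{weaktwo-waveloc}) and using $d\mu_\alpha(a,x)=d\mu_\alpha(x)\,da/a^{2\alpha+d+3}$, I would first separate the two integrations and write
\[
\langle\mathcal{L}_{\varphi,\psi}(\sigma)(f),g\rangle_{\alpha,2}=\int_0^\infty\chi(a)\left[\int_{\mathbb{R}^{d+1}_+}\Phi^W_\varphi(f)(a,x)\overline{\Phi^W_\psi(g)(a,x)}\,d\mu_\alpha(x)\right]\frac{da}{a^{2\alpha+d+3}}.
\]

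For a.e.\ fixed $a>0$ the inner bracket is the $L^2_\alpha$-inner product in the space variable, so the key step is to evaluate it on the Weinstein-transform side. Using the convolution form (\ref{recontwave}), the product rule (\ref{F(f*g)}), the dilation identity (\ref{fourierfia}) and the conjugation/reflection rules (\ref{fourierbar})--(\ref{fourierbar1}), I would establish
\[
\mathcal{F}_{W}\bigl(\Phi^W_\varphi(f)(a,\cdot)\bigr)(\xi)=a^{\alpha+1+\frac{d}{2}}\,\mathcal{F}_{W}(f)(-\xi)\,\overline{\mathcal{F}_{W}(\varphi)(-a\xi)},
\]
and the analogous formula for $\psi,g$. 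Applying Parseval's formula (\ref{MM}) (extended to $L^2_\alpha$ by Plancherel (\ref{Plancherel formula})) to the bracket converts it into an integral over $\xi$ of the product of these two Fourier transforms; the prefactors combine into $a^{2\alpha+d+2}$, which cancels the $a^{-(2\alpha+d+3)}$ of the measure to leave exactly $da/a$.

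At this point I would invoke Fubini's theorem to interchange the $a$- and $\xi$-integrations. This is the step that genuinely requires the hypothesis that $\chi$ is \emph{suitable} together with the admissibility of $\varphi,\psi$: one must check that
\[
\int_{\mathbb{R}^{d+1}_+}\int_0^\infty|\chi(a)|\,\bigl|\mathcal{F}_{W}(\varphi)(a\xi)\bigr|\,\bigl|\mathcal{F}_{W}(\psi)(a\xi)\bigr|\,\bigl|\mathcal{F}_{W}(f)(\xi)\bigr|\,\bigl|\mathcal{F}_{W}(g)(\xi)\bigr|\,\frac{da}{a}\,d\mu_\alpha(\xi)<\infty,
\]
so that the double integral is absolutely convergent; verifying this integrability is the main (and essentially the only) obstacle, the remaining manipulations being routine. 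After the interchange, the change of variables $\xi'\mapsto-\xi'$, which leaves $d\mu_\alpha$ invariant and (because $-\xi=(-\xi',\xi_{d+1})$) turns every $-a\xi$ into $a\xi$ and every $-\xi$ into $\xi$, collapses the bracketed $a$-integral into precisely $m(\xi)$, yielding
\[
\langle\mathcal{L}_{\varphi,\psi}(\sigma)(f),g\rangle_{\alpha,2}=\int_{\mathbb{R}^{d+1}_+}m(\xi)\,\mathcal{F}_{W}(f)(\xi)\,\overline{\mathcal{F}_{W}(g)(\xi)}\,d\mu_\alpha(\xi).
\]
Finally, recognizing the right-hand side as $\langle m\,\mathcal{F}_{W}(f),\mathcal{F}_{W}(g)\rangle_{\alpha,2}=\langle\mathcal{F}_{W}^{-1}(m\,\mathcal{F}_{W}(f)),g\rangle_{\alpha,2}=\langle\mathcal{T}^W_m(f),g\rangle_{\alpha,2}$, via (\ref{MM}) and the inversion (\ref{inversionweinstein}), and letting $g$ range over $L^2_\alpha(\mathbb{R}^{d+1}_+)$, gives $\mathcal{L}_{\varphi,\psi}(\sigma)=\mathcal{T}^W_m$.
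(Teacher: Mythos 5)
Your argument is correct in outline and reaches the right identity, but it follows a genuinely different route from the paper. You compute $\langle\mathcal{L}_{\varphi,\psi}(\sigma)f,g\rangle_{\alpha,2}$ directly: Plancherel/Parseval in the space variable for each fixed scale $a$ (using the convolution form of $\Phi^W_\varphi$ and the dilation and reflection rules to identify $\mathcal{F}_{W}(\Phi^W_\varphi(f)(a,\cdot))$), a cancellation of the powers of $a$ against the measure $da/a^{2\alpha+d+3}$, then one application of Fubini and the change of variables $\xi'\mapsto-\xi'$ to produce $m(\xi)$. The paper instead regularizes: it introduces a Gaussian mollifier $e^{-\|b\|^2/(2m^2)}$ inside a triple integral $I_m$ over $b,\xi,\eta$, uses the fact that the Gaussian is its own Weinstein transform to turn the $b$-integration into a convolution of $\mathcal{F}_{W}(\psi_a)\overline{\mathcal{F}_{W}(g)}$ with an approximate identity $h_m$, and then lets $m\to\infty$, identifying the limit of $I_m$ with $\langle\mathcal{T}^W_m f,g\rangle_{\alpha,2}$ on one side and with $\langle\mathcal{L}_{\varphi,\psi}(\sigma)f,g\rangle_{\alpha,2}$ on the other via dominated convergence. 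What your approach buys is transparency: the whole proof reduces to a single absolute-convergence check, which you correctly isolate as the only nontrivial point and the place where ``suitable'' $\chi$ and the admissibility of $\varphi,\psi$ must enter (e.g.\ via Cauchy--Schwarz in $a$ against the admissibility integrals when $\chi\in L^\infty$). What the paper's mollification buys is that the off-diagonal ($\xi\neq\eta$) bookkeeping is handled by the approximate identity rather than by a direct interchange of all the integrals at once; but it still needs a dominated-convergence hypothesis of essentially the same strength at the final step, so neither route avoids the integrability issue you flagged. One small point to tighten in your write-up: be explicit that $-a\xi$ means $a\cdot(-\xi)=(-a\xi',a\xi_{d+1})$, so that the substitution $\xi'\mapsto-\xi'$ really does convert every occurrence of $-a\xi$ into $a\xi$ simultaneously with $-\xi$ into $\xi$.
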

  \begin{proof}
    For any positive integer $m$, we define $I_m$ by

      \begin{eqnarray*}
      I_m &=&  \int_{\mathbb{R}^{d+1}_+}\int_{\mathbb{R}^{d+1}_+}\int_{\X}e^{-\frac{\|b\|^2}{2m^2}}\chi(a)\overline{\mathcal{F}_{W}(\varphi)
      (a\xi)}\mathcal{F}_{W}(\psi)(a\eta) \Lambda_{\alpha}^d(ib,\xi)\Lambda_{\alpha}^d(-ib,\eta)\\
    && \times\mathcal{F}_{W}(f)(\xi) \overline{\mathcal{F}_{W}(g)(\eta)}d\mu_\alpha(b)d\mu_\alpha(\eta)d\mu_\alpha(\xi) \frac{da}{a}.
      \end{eqnarray*}
 Then using Fubini's theorem and the fact that the Weinstein transform of the function $h(x)=e^{-\frac{\|x\|^2}{2}}$  is equal to itself for all $x\in \mathbb{R}^{d+1}_+$ (see \cite[Example 1]{gasmi2016inversion}), we get
     \begin{eqnarray}\label{suiteinteg}
       I_m &=& \int_{\mathbb{R}^{d+1}_+}\int_{\mathbb{R}^{d+1}_+}\int_{0}^\infty \tau_{-\eta}^\alpha h_m(\xi) \chi(a)\overline{\mathcal{F}_{W}(\varphi)(a\xi)} \mathcal{F}_{W}(\psi)(a\eta) \nonumber\\
        && \times \mathcal{F}_{W}(f)(\xi) \overline{\mathcal{F}_{W}(g)(\eta)} d\mu_\alpha(\eta)d\mu_\alpha(\xi) \frac{da}{a}  \nonumber\\
        &=& \int_{\mathbb{R}^{d+1}_+}\int_{0}^\infty \chi(a)\overline{\mathcal{F}_{W}(\varphi)(a\xi)} \left(\mathcal{F}_{W}(\psi_a) \overline{\mathcal{F}_{W}(g)}* h_m\right)(\xi)
       \mathcal{F}_{W}(f)(\xi) d\mu_\alpha(\xi) \frac{da}{a}.\nonumber\\
     \end{eqnarray}
On the other hand, it's easy to see that
\begin{equation}\label{convF}
   \mathcal{F}_{W}(\psi_a) \overline{\mathcal{F}_{W}(g)}* h_m\rightarrow \mathcal{F}_{W}(\psi_a) \overline{\mathcal{F}_{W}(g)}
\end{equation}
in $L^2_\alpha(\mathbb{R}^{d+1}_+)$ and almost everywhere on $\mathbb{R}^{d+1}_+$ as $n\rightarrow\infty$. Therefore, according to relations (\ref{suiteinteg}) and (\ref{convF}), we obtain
\begin{equation}\label{convIm}
  I_m\rightarrow \int_{\mathbb{R}^{d+1}_+}\int_{0}^\infty \chi(a)\overline{\mathcal{F}_{W}(\varphi)(a\xi)} \mathcal{F}_{W}(\psi) (a\xi)\overline{\mathcal{F}_{W}(g)(\xi)}
       \mathcal{F}_{W}(f)(\xi) d\mu_\alpha(\xi) \frac{da}{a},
\end{equation}
 as $n\rightarrow\infty$. Subsequently, using Lebesgue's dominated convergence theorem, we see that
\begin{equation*}
  I_m\rightarrow \langle \mathcal{L}_{\varphi, \psi}(\sigma)(f),\ g \rangle_{\alpha,2}.
\end{equation*}
Therefore,
\begin{equation*}
\langle \mathcal{L}_{\varphi, \psi}(\sigma)(f),\ g \rangle_{\alpha,2}=\langle \mathcal{T}_{m}^Wf,\ g \rangle_{\alpha,2},
\end{equation*}
for all $f$ and $g$ in $L^2_\alpha(\mathbb{R}^{d+1}_+)$,  where $\mathcal{T}^W_m$ is the Weinstein multiplier with symbol $m$ given by
 \begin{equation*}
   m(\xi)=\int_{0}^{\infty}\chi(a)\overline{\mathcal{F}_{W}(\varphi)(a\xi)}\mathcal{F}_{W}(\psi)(a\xi)\frac{da}{a},\quad \forall \xi\in \mathbb{R}^{d+1}_+.
 \end{equation*}
 \end{proof}

\end{document}